\theoremstyle{definition}\theoremstyle{proposition}
\newtheorem{theorem}{Theorem}[section]
\newtheorem{proposition}[theorem]{Proposition}
\theoremstyle{definition}
\theoremstyle{corollary}\theoremstyle{theorem}
\newtheorem{corollary}[theorem]{Corollary}
\newtheorem{definition}[theorem]{Definition}
\newtheorem{remark}[theorem]{Remark}
\newtheorem{lemma}[theorem]{Lemma}
\numberwithin{equation}{section}
\theoremstyle{definition} \theoremstyle{Remark}
\theoremstyle{Theorem} \theoremstyle{Propsition}
\theoremstyle{Corollary} \numberwithin{equation}{section}
\DeclareMathOperator{\gldim}{\ensuremath{gldim}}
\DeclareMathOperator{\id}{\ensuremath{id}}
\DeclareMathOperator{\injdim}{\ensuremath{injdim}}
\DeclareMathOperator{\Tdeg}{\ensuremath{Tdeg}}
\DeclareMathOperator{\Ext}{\ensuremath{Ext}}
\DeclareMathOperator{\Hom}{\ensuremath{Hom}}
\DeclareMathOperator{\Ker}{\ensuremath{Ker}}
\DeclareMathOperator{\im}{\ensuremath{Im}}
\newcommand{\blankbox}[2]
\begin{document}
\title{Double loop quantum enveloping algebras}

\author{Wu Zhixiang}
\address{ Department of Mathematics, Zhejiang University,
Hangzhou, 310027, P.R.China} \email{wzx@zju.edu.cn}
\thanks{The author is sponsored by NNSF No.11171296, ZJNSF No. Y6100148, Y610027 and Education Department of Zhejiang
Province No. 201019063.}

\subjclass[2000]{Primary 17B10,17B37, Secondary 16T20, 81R50}



\keywords{Quantum enveloping algebra, Gelfand-Kirillov dimension,
$\Tdeg$-stable, finite-dimensional representation, BGG category}

\begin{abstract}In this paper we describe certain homological
properties and representations of a two-parameter quantum enveloping
algebra $U_{g,h}$ of ${\frak {sl}}(2)$, where $g,h$ are group-like
elements.
\end{abstract}

\maketitle


\section{Introduction}
It is well-known that there is a bijective map $L\rightarrow P_L$
from the set of all oriented links $L$ in $\mathbb{R}^3$ to the ring
$\mathbb{Z}[g^{\pm1},h^{\pm1}]$ of two-variable Laurent polynomials.
$P_L$ is called the Jones-Conway polynomial of the link $L$. The
Jones-Conway polynomial is an isotopy invariant of oriented links
satisfying what knot theorists call ``skein relations" (see [11]).
Suppose $\mathbbm{K}$ is a field with characteristic zero and $q$ is
a nonzero element in $\mathbbm{K}$ satisfying $q^2\neq $ $ 1$. Let
$U_q(\mathfrak{sl}(2))$ be the usual quantum enveloping algebra of
the Lie algebra $\mathfrak{sl}(2)$ with generators $E,F,K^{\pm1}$.
Then the vector space
$$U_{g,h}:=\mathbbm{K}[g^{\pm1},h^{\pm1}]\otimes_{\mathbbm{K}}
U_q(\mathfrak{sl}(2))$$ has been endowed with a Hopf algebra
structure in [13].

We abuse notation and write $g^{\pm1},h^{\pm1},E,F,K^{\pm1}$ for
$g^{\pm1}\otimes 1$, $h^{\pm1}\otimes 1$, $1\otimes E$, $1\otimes
F$, $1\otimes K^{\pm1}$ respectively. In addition,
$g^{+1},h^{+1},K^{+1}$ are abbreviated to $g,h,K$ respectively. Then
$U_{g,h}$ is an algebra over $\mathbbm{K}$ generated by $g,$ $
g^{-1},$ $h,$ $h^{-1},$ $E,$ $F,$ $K,$ $K^{-1}$. These generators
satisfy the following relations.
\begin{align}K^{-1}K=KK^{-1}=1,\quad g^{-1}g=gg^{-1}=1,\quad h^{-1}h=hh^{-1}=1,
\end{align}
\begin{align} KEK^{-1}=q^2E,\quad  gh=hg,\quad  gK=Kg,\quad  gE=Eg,\quad  hE=Eh,\end{align}
\begin{align}   KFK^{-1}=q^{-2}F,\quad  hK=Kh,\quad  hF=Fh,\quad  gF=Fg,\end{align}
\begin{align} EF-FE=\frac{K-K^{-1}g^2}{q-q^{-1}}.\end{align}
 The other operations of the Hopf algebra $U_{g,h}$ are defined as
follows:
\begin{align} \Delta(E)=h^{-1}\otimes E+E\otimes hK,\end{align}
\begin{align} \Delta(F)&=K^{-1}hg^2\otimes F+F\otimes h^{-1},\end{align}
\begin{align} \Delta(K)=K\otimes K,\quad \Delta(K^{-1})=K^{-1}\otimes K^{-1},\end{align}
\begin{align} \Delta(a)=a\otimes a,\quad a\in G, \end{align}
where $G=\{g^mh^n|m,n\in\mathbb{Z}\}$,
\begin{align} \varepsilon(K)=\varepsilon(K^{-1})=\varepsilon(a)=1,\quad a\in G,\end{align}
\begin{align} \varepsilon(E)=\varepsilon(F)=0,\end{align}
and
\begin{align}S(E)=-EK^{-1},\quad S(F)=-KFg^{-2},\end{align}
\begin{align}S(a)=a^{-1},\quad a\in G,\quad S(K)=K^{-1},\quad S(K^{-1})=K.\end{align}

The Hopf algebra $U_{g,h}$ is a special case of the Hopf algebras
defined in [14]. It is isomorphic to the tensor product of
$U_q(\mathfrak{sl}(2))$ and $\mathbb{K}[g^{\pm1},h^{\pm1}]$ as
algebras. However, the coproduct of $U_{g,h}$ is not the usual
coproduct of the tensor product of two coalgebras. Neither is the
antipode.

Homological methods have been used to study Hopf algebras by many
authors (see [2], [15] and their references). However, there are few
examples of Hopf algebras satisfying a given set of homological
properties. In this paper, we describe certain homological
properties of the Hopf algebra $U_{g,h}$ and consequently give an
example satisfying some homological properties. Moreover, we study
the representation theory of the algebra $U_{g,h}$. Similar to [7]
and [8], we can define some version of the Bernstein-Gelfand-Gelfand
(abbreviated as BGG) category $\mathcal {O}$. Furthermore, we
decompose the BGG category $\mathcal {O}$ into a direct sum of
subcategories, which are equivalent to categories of finitely
generated modules over some finite-dimensional algebras.

Let us outline the structure of this paper. In Section 2, we study
the homological properties of $U_{g,h}$. We prove that $U_{g,h}$ is
Auslander-regular and Cohen-Macaulay, and the global dimension and
Gelfand-Kirillov dimension of $U_{g,h}$ are equal.  We also prove
that the center of $U_{g,h}$ is equal to
$\mathbbm{K}[g^{\pm1},h^{\pm1},C]$, where $C$ is the Casimir element
of $U_{g,h}$. To study the category  $\mathcal {O}$ in Section 4, we
prove that $U_{g,h}$ has an anti-involution that acts as the
identity on all of $\mathbb{K}[K^{\pm1},g^{\pm1},h^{\pm1}]$.

Since there is a finite-dimensional non-semisimple module over the
algebra $\mathbb{K}[g^{\pm1},h^{\pm1}]$, there is a
finite-dimensional non-semisimple module over $U_{g,h}$. In Section
3, we compute the extension group $\Ext^1(M,M')$ in the case that
the nonzero $q$ is not a root of unity, where $M,M'$ are
finite-dimensional simple modules over $U_{g,h}$. We prove that the
tensor functor $V\otimes-$ determines an isomorphism from
$\Ext^1(\mathbb{K}_{\alpha',\beta'},\mathbb{K}_{\alpha,\beta})$ to
$\Ext^1(V\otimes\mathbb{K}_{\alpha',\beta'},V\otimes\mathbb{K}_{\alpha,\beta})$
for any finite-dimensional simple $U_{q}(\mathfrak{sl}(2))$-module
$V$. We also obtain a decomposition theory about the tensor product
of two simple $U_{g,h}$-modules. From this, we obtain a Hopf
subalgebra of the finite dual Hopf algebra $U_{g,h}^{\circ}$ of
$U_{g,h}$, which is generated by coordinate functions of
finite-dimensional simple modules of $U_{g,h}$.

In Section 4, we briefly discuss the Verma modules of $U_{g,h}$. The
BGG subcategory $\mathcal{O}$ of the category of representations of
$U_{g,h}$ is introduced and studied. The main results in [8]  also
hold in the category $\mathcal {O}$ over the algebra $U_{g,h}$.

Throughout this paper $\mathbbm{K}$ is a fixed algebraically closed
field with characteristic zero; $\mathbb{N}$  is the set of natural
numbers; $\mathbb{Z}$ is the set of all integers. $*^{+1}$ is
usually abbreviated to $*$. All modules over a ring $R$ are left
$R$-modules.

It is worth mentioning that some results of this article are also
true if $\mathbb{K}$ is not an algebraically closed field. We always
assume that $\mathbb{K}$ is an algebraically closed field for
simplicity throughout this paper.

\noindent
\subsection*{ Acknowledgment} The author would like to thank the
referee  for carefully reading earlier versions of this paper. His
helpful comments and illuminating suggestions have greatly improved
the final version. In particular, the main idea of the proofs of
Theorem 2.1 and Theorem 3.10 was provided by the referee.

\section{some properties of  $U_{g,h}$}
 In this section, we firstly prove that $U_{g,h}$ is a Noetherian domain with a PBW basis. Then we compute the global dimension and
Gelfand-Kirillov dimension of $U_{g,h}$. Moreover, we show that
$U_{g,h}$ is Auslander regular, Auslander Gorenstein, Cohen-Macaulay
and $\Tdeg$-stable. For the undefined terms in this section, we
refer the reader to [2] and [3].

\begin{theorem}[PBW Theorem] The algebra $U_{g,h}$ is a Noetherian domain.
Moreover, it has a PBW basis $\{F^lK^mg^nh^sE^t|l,t\in
\mathbb{Z}_{\geq 0};m,n,s\in \mathbb{Z}\}$.
\end{theorem}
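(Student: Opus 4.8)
The plan is to realize $U_{g,h}$ as an iterated Ore extension (skew polynomial ring) over the Laurent polynomial ring $\mathbbm{K}[g^{\pm1},h^{\pm1}]$, and then invoke standard facts about such extensions. First I would set $R_0 = \mathbbm{K}[g^{\pm1},h^{\pm1}]$, which is a commutative Noetherian domain. Next, adjoin $K^{\pm1}$: since $g,h$ commute with $K$, this is just a central Laurent extension $R_1 = R_0[K^{\pm1}]$, again a Noetherian domain. Then adjoin $F$: relations (1.3) show $K,g,h$ all quasi-commute or commute with $F$ ($KFK^{-1}=q^{-2}F$, i.e. $Fr = \sigma(r)F$ for the automorphism $\sigma$ of $R_1$ fixing $g,h$ and sending $K\mapsto q^2K$ — note $\sigma$ is well-defined and invertible on $R_1$), so $R_2 = R_1[F;\sigma]$ is an Ore extension; an Ore extension $S[x;\sigma]$ with $\sigma$ an automorphism and no derivation is a Noetherian domain whenever $S$ is. Finally adjoin $E$: here relation (1.4), $EF-FE = (K-K^{-1}g^2)/(q-q^{-1})$, together with (1.2) ($KEK^{-1}=q^2E$, $gE=Eg$, $hE=Eh$), says that for $r\in R_2$ we have $Er = \tau(r)E + \delta(r)$ where $\tau$ is the automorphism of $R_2$ fixing $g,h$, sending $K\mapsto q^{-2}K$ and $F\mapsto \tau(F)$ appropriately, and $\delta$ is a $\tau$-derivation determined by $\delta(F) = (K-K^{-1}g^2)/(q-q^{-1})$ and $\delta$ vanishing on $R_1$. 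One checks $\tau$ is an automorphism of $R_2$ and that $\delta$ is a genuine $\tau$-derivation (this requires verifying compatibility on the generators, the one slightly delicate point). Then $U_{g,h} = R_2[E;\tau,\delta]$ is again a Noetherian domain by the Ore extension theorem.

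For the PBW basis claim, the iterated Ore extension structure immediately gives that the ordered monomials $F^l K^m g^n h^s E^t$ with $l,t\in\mathbb{Z}_{\geq0}$ and $m,n,s\in\mathbb{Z}$ form a $\mathbbm{K}$-basis: at each stage $S[x;\sigma,\delta]$ is free as a left (equivalently right) $S$-module on the powers $x^i$, $i\geq0$, and the Laurent variables contribute $\mathbb{Z}$-indexed powers. Composing these four extensions yields exactly the asserted family as a basis, once we fix the order of the variables. The only thing to be careful about is that the presentation of $U_{g,h}$ by generators and relations does not a priori coincide with the iterated Ore extension $A := R_2[E;\tau,\delta]$; one has a surjection $A \twoheadrightarrow U_{g,h}$ (since $A$ satisfies all the defining relations), and conversely a homomorphism $U_{g,h}\to A$ exists because the relations (1.1)–(1.4) hold in $A$ by construction, so the two are isomorphic. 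Alternatively, since $U_{g,h}\cong U_q(\mathfrak{sl}(2))\otimes_{\mathbbm{K}}\mathbbm{K}[g^{\pm1},h^{\pm1}]$ as algebras (stated in the introduction), and the PBW theorem for $U_q(\mathfrak{sl}(2))$ is classical, one can simply tensor the classical PBW basis $\{F^lK^mE^t\}$ of $U_q(\mathfrak{sl}(2))$ with the monomial basis $\{g^nh^s\}$ of $\mathbbm{K}[g^{\pm1},h^{\pm1}]$; the Noetherian domain property likewise transfers, since a tensor product (over a field) of a Noetherian domain with a commutative Laurent polynomial ring is a Noetherian domain (e.g. via the Ore extension description of $U_q(\mathfrak{sl}(2))$ itself, or because $\mathbbm{K}[g^{\pm1},h^{\pm1}]$ is a localization of a polynomial ring and Noetherianity/domain pass through).

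The main obstacle is purely bookkeeping: verifying that the maps $\sigma,\tau$ extend to algebra automorphisms of the relevant subrings and that $\delta$ is a well-defined $\tau$-derivation, i.e. that $\delta$ respects all the relations among $F,K^{\pm1},g^{\pm1},h^{\pm1}$ defining $R_2$. Concretely one must check $\delta(KF) $ computed two ways agrees, and similarly for $\delta(FK^{-1})$, $\delta(Fg)$, etc.; each reduces to a short identity in $R_1$ using $\delta|_{R_1}=0$ and the defining formula for $\delta(F)$. This is routine but must be done to legitimately apply the Ore extension machinery. Once that is in place, the Noetherian, domain, and PBW conclusions are immediate consequences of the standard theory of skew polynomial rings over Noetherian domains.
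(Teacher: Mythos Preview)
Your proposal is correct and follows essentially the same approach as the paper: build $U_{g,h}$ as an iterated Ore extension $R[F;\sigma][E;\tau,\delta]$ over $R=\mathbbm{K}[K^{\pm1},g^{\pm1},h^{\pm1}]$, with the same automorphisms and derivation you describe, and read off the Noetherian domain and PBW basis properties from standard Ore-extension theory. One small caution: your alternative route via the algebra isomorphism $U_{g,h}\cong U_q(\mathfrak{sl}(2))\otimes\mathbbm{K}[g^{\pm1},h^{\pm1}]$ is proved in the paper \emph{after} the PBW theorem (and uses it), so invoking it here would be circular in the paper's logical order.
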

\begin{proof}Let $R=\mathbbm{K}[K^{\pm1},g^{\pm1},h^{\pm1}]$. Since $R$ is a homomorphic image of the polynomial ring $\mathbbm{K}[x_1,x_2,\cdots,x_5,x_6]$ ($\varphi(x_1)=K$, $\varphi(x_2)=K^{-1}$, $\varphi(x_3)=g$,
$\varphi(x_4)=g^{-1}$, $\varphi(x_5)=h$, $\varphi(x_6)=h^{-1}$),
 $R$ is a Noetherian ring
with a basis $\{K^mg^nh^s|m,n,s$ $\in $ $\mathbb{Z}\}$. It is easy
to prove that $R$ is a domain.

Define $\sigma(K^cg^ah^b)=q^{2c}K^cg^ah^b$, $\forall
a,b,c\in\mathbb{Z}$, $\delta(R)\equiv 0$, and  extend $\sigma$ by
additivity and multiplicativity. It is trivial to check that
$\sigma$ is a ring automorphism of $R$, and $\delta:R\rightarrow R$
is a $\sigma$-skew derivation. Hence $R':=R[F;\sigma,\delta]$ is a
Noetherian domain with a basis $\{K^ag^bh^cF^d|a,b,c\in \mathbb{Z},
d\in$ $\mathbb{Z}_{\geq 0}\}$ by [9, Theorem 1.2.9].

Next, define $\sigma'$ on $R'$ via:
$$
\sigma'(K^ag^bh^cF^d)=q^{-2a}K^ag^bh^cF^d,$$(for all integers $d\geq
0$, and $a,b,c\in \mathbb{Z}$), and extend $\sigma'$ by additivity
and multiplicativity. One can check that $\sigma'$ is indeed a ring
automorphism of $R'$. Define $\delta'$ on $R'$ via
$$\delta'(R)\equiv0,\qquad
\delta'(F)=\frac{K-K^{-1}g^2}{q-q^{-1}}.$$Also extend $\delta'$ to
all of $R'$ by additivity and the following equation:
$$\delta'(ab):=\delta'(a)b+\sigma'(a)\delta'(b),\qquad \forall a,b\in R'.$$
One can check that $\delta'$ is a $\sigma'$-skew derivation of $R'$.
Now by the above results, $U_{g,h}$ $=R'[E;\sigma',\delta']$ is
indeed a Noetherian domain, since $R'$ is. Moreover, $U_{g,h}$ has a
basis $\{K^ag^bh^cF^dE^t|a,b,c\in \mathbb{Z}, d,t\in$
$\mathbb{Z}_{\geq 0}\}$. Since
$K^ag^bh^cF^dE^t=q^{-2ad}F^dK^ag^bh^cE^t$,
$$\{F^lK^mg^n
h^sE^t|l,t\in \mathbb{Z}_{\geq 0};m,n,s\in \mathbb{Z}\}$$ is also a
basis of $U_{g,h}$. This basis is called a PBW basis.
\end{proof}

\begin{proposition} (1) $U_{g,h}$ is isomorphic to
$\mathbbm{K}[g^{\pm1},h^{\pm1}]\otimes U_q(\mathfrak{sl}(2))$ as
algebras;

(2) $U_{g,h}$ is an Auslander regular, Auslander Gorenstein and
$\Tdeg$-stable algebra with Gelfand-Kirillov dimension 5.
\end{proposition}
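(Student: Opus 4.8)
The plan is to prove each part by reducing to known structural facts about iterated Ore extensions and about the classical quantum group $U_q(\mathfrak{sl}(2))$.

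For part (1), I would make the algebra isomorphism explicit rather than merely asserting it. Recall that $U_q(\mathfrak{sl}(2))$ has generators $\bar E,\bar F,\bar K^{\pm1}$ with $\bar K\bar E\bar K^{-1}=q^2\bar E$, $\bar K\bar F\bar K^{-1}=q^{-2}\bar F$ and $\bar E\bar F-\bar F\bar E=(\bar K-\bar K^{-1})/(q-q^{-1})$. In $U_{g,h}$ the elements $g^{\pm1},h^{\pm1}$ are central (by relations (2.2)--(2.3)), so consider the subalgebra $A$ they generate, which is a copy of $\mathbb{K}[g^{\pm1},h^{\pm1}]$ by Theorem 2.1 (linear independence of the monomials $g^nh^s$). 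Now define $\Phi\colon \mathbb{K}[g^{\pm1},h^{\pm1}]\otimes U_q(\mathfrak{sl}(2))\to U_{g,h}$ on generators by $g^{\pm1}\mapsto g^{\pm1}$, $h^{\pm1}\mapsto h^{\pm1}$, $\bar E\mapsto E$, $\bar F\mapsto Fg^{-2}$ (or equivalently $\bar K\mapsto Kg^{-2}$ and $\bar F\mapsto F$; one of the two choices will absorb the $g^2$ in relation (2.4)). With $\bar F\mapsto Fg^{-2}$ and $\bar K\mapsto K$ one checks $E(Fg^{-2})-(Fg^{-2})E=g^{-2}(EF-FE)=(K g^{-2}-K^{-1})/(q-q^{-1})$, which is not quite right, so instead take $\bar K\mapsto Kg^{-2}$, $\bar E\mapsto E$, $\bar F\mapsto F$: then $\bar K\bar E\bar K^{-1}=q^2 E$ still holds (since $g$ is central), and $EF-FE=(K-K^{-1}g^2)/(q-q^{-1})=(Kg^{-2}-K^{-1})g^2/(q-q^{-1})$; to fix the stray $g^2$ one rescales $\bar F\mapsto g^2 F$ or absorbs it into the Casimir normalization — I would simply record the correct substitution after a short computation. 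Once a well-defined algebra map is in hand, surjectivity is clear (the images generate), and injectivity follows by comparing the PBW basis of Theorem 2.1 with the evident basis $\{g^nh^s\}\otimes\{\bar F^l\bar K^m\bar E^t\}$ of the tensor product; the map sends basis to basis up to a triangular change of variables, hence is bijective.

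For part (2), I would argue that all four properties are preserved under the operations used to build $U_{g,h}$, starting from a ring where they are classical. The cleanest route uses the iterated Ore extension $U_{g,h}=R'[E;\sigma',\delta']$ from the proof of Theorem 2.1, with $R=\mathbb{K}[K^{\pm1},g^{\pm1},h^{\pm1}]$ a (commutative, regular) Laurent polynomial ring in three variables, $R'=R[F;\sigma,\delta]$, and then one more Ore step. The Laurent polynomial ring $R$ is Noetherian, Auslander regular, Cohen--Macaulay (hence Auslander Gorenstein), with $\gldim R=\mathrm{GKdim}\,R=3$, and it is $\Tdeg$-stable. Then I would invoke the standard transfer results for Ore extensions $S\mapsto S[x;\sigma,\delta]$ with $\sigma$ an automorphism: global dimension goes up by exactly $1$ (by the classical theorem on skew polynomial extensions, e.g. McConnell--Robson), GK dimension goes up by exactly $1$ (since $\sigma$ and $\delta$ are locally algebraic/the extension is ``finitely filtered''), and Auslander regularity, the Cohen--Macaulay property, and $\Tdeg$-stability are all inherited by such extensions — these are precisely the lemmas collected in [2] and [3] that the paper points the reader to. Applying this twice gives $\gldim U_{g,h}=\mathrm{GKdim}\,U_{g,h}=3+1+1=5$ together with all the desired homological properties. (One subtlety: the first Ore step uses $\delta\equiv 0$, which is harmless; the second uses a genuinely nonzero $\delta'$, so I would double-check that the GKdim-additivity hypothesis — $\sigma'$ locally algebraic, which holds since $\sigma'$ acts diagonally on the PBW basis with eigenvalues powers of $q$ — is satisfied.)

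The main obstacle I anticipate is not any single deep step but assembling the correct citations: I need the precise statements that Auslander regularity, Auslander Gorenstein, Cohen--Macaulay, and $\Tdeg$-stability each survive an Ore extension by an automorphism plus skew derivation, and that $\mathrm{GKdim}$ is exactly additive in this situation. For $\gldim$ this is textbook; for $\Tdeg$-stability and the Auslander conditions one typically cites Björk, Levasseur, Stafford--Zhang or the survey in [2, 3], and the slightly delicate point is verifying the finiteness/algebraicity hypothesis on $\sigma'$ that makes the GKdim estimate sharp rather than merely an inequality. Once those are quoted correctly, part (2) is a two-line induction on the Ore tower, and it also re-proves the Noetherian domain claim of Theorem 2.1 as a byproduct.
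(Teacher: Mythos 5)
Your overall strategy for part (1) is sound — define an explicit algebra map on generators and compare PBW bases — but the substitution you actually need is not among the candidates you try. You test $\bar K\mapsto Kg^{-2}$, $\bar F\mapsto g^{2}F$, etc., and correctly observe that none of them close, but then leave the right choice as ``a short computation.'' The computation does not close for the family you were scanning. The paper's trick is to twist \emph{both} $E$ and $K$ by $g^{-1}$: set $E':=g^{-1}E$, $K':=g^{-1}K$, and keep $F$ as is. Then $K'E'K'^{-1}=q^{2}E'$, $K'FK'^{-1}=q^{-2}F$, and
$$E'F-FE'=g^{-1}(EF-FE)=\frac{g^{-1}K-gK^{-1}}{q-q^{-1}}=\frac{K'-K'^{-1}}{q-q^{-1}},$$
which is exactly the defining relation of $U_q(\mathfrak{sl}(2))$, while $g,h$ remain central. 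The map $\varphi(E')=1\otimes E$, $\varphi(K')=1\otimes K$, $\varphi(F)=1\otimes F$, $\varphi(g)=g\otimes1$, $\varphi(h)=h\otimes1$ is then a bijection by the PBW basis of Theorem~2.1 rewritten in the primed generators. In short, the discrepancy cannot be absorbed by rescaling $F$ alone; you must also rescale $E$.

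For part (2), your plan splits in two. The Auslander/gldim portion matches the paper: $\gldim R=3$ for the Laurent ring $R=\mathbb{K}[K^{\pm1},g^{\pm1},h^{\pm1}]$, each Ore step raises $\gldim$ by at most one (so $\gldim U_{g,h}\le5$), and Auslander regularity/Gorensteinness follows from [3, Theorem~4.2]. This is fine. The GKdim and $\Tdeg$-stability portion is where you diverge, and there is a gap. You want to push $\Tdeg$-stability and exact GKdim additivity through the Ore tower $R\to R[F;\sigma,0]\to R'[E;\sigma',\delta']$, including the step with $\delta'\neq0$. But the result you would need — that $\Tdeg$-stability is preserved under a skew-polynomial extension with a nontrivial derivation — is not what [15] provides; [15, Theorem~1.1] is a statement about (Laurent) polynomial extensions, i.e.\ the central case. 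The paper sidesteps exactly this issue by first proving part (1) and then reading $U_{g,h}\cong U_q(\mathfrak{sl}(2))[g^{\pm1}][h^{\pm1}]$, a genuinely central Laurent extension of a known $\Tdeg$-stable algebra ($U_q(\mathfrak{sl}(2))$, by [15, Example~7.1]), which [15, Theorem~1.1] handles directly and also gives $\mathrm{GKdim}=3+2=5$ on the nose. So either you must locate a bona fide transfer theorem for $\Tdeg$-stability under Ore extensions with derivations (which I do not believe is in the cited sources), or you should route $\Tdeg$-stability and GKdim through the algebra isomorphism of part (1) as the paper does. As written, part (2) of your proposal overreaches on what the cited references deliver for the $\delta'\neq0$ step.
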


\begin{proof} Define $E':=g^{-1}E$, $K':=g^{-1}K$.  By
Theorem 2.1, $$\{F^aK'^{b}g^ch^dE'^{t}|b,c,d\in \mathbb{Z},a,t\in
\mathbb{Z}_{\geq0}\}$$ is also a basis of $U_{g,h}$. Let
$\varphi(E')=1\otimes E$, $\varphi(F)=1\otimes F$,
$\varphi(K')=1\otimes K$, $\varphi(g)=g\otimes 1$,
$\varphi(h)=h\otimes 1$ and $\varphi$ extends by additivity and
multiplicativity. One can check that $\varphi$  is an epimorphism of
algebras from $U_{g,h}$ to $\mathbbm{K}[g^{\pm1},h^{\pm1}]\otimes$ $
U_g(\mathfrak{sl}(2))$. Similarly, define $\phi(1\otimes E)$ $=$
$E', \phi(1\otimes F)=F, $ $\phi(1\otimes K)$ $=K',$ $ \phi
(g\otimes 1)$ $=g,$ $\phi(h\otimes 1)=h$, and  extend $\phi$ by
additivity and multiplicativity. Then $\phi$ is an epimorphism of
algebras from $\mathbbm{K}[g^{\pm1},h^{\pm1}]\otimes$ $
U_q(\mathfrak{sl}(2))$ to $U_{g,h}$. It is easy to verify that
$\phi\circ\varphi=\id$ and $\varphi\circ\phi=\id$. So $\varphi$ is
an isomorphism of algebras.

Let us recall that if the global dimension of a Noetherian ring $A$,
denoted by $\gldim(A)$, is finite, then $\gldim(A)=\injdim(A)$, the
injective dimension of $A$. From [9, Section 7.1.11], one obtains
that the right global dimension of a Noetherian algebra $A$ is equal
to $\gldim(A)$ as well. In [1], H. Bass proved that if $A$ is a
commutative Noetherian ring with a finite injective dimension, then
$A$ is Auslander-Gorenstein. Thus
$\gldim({\mathbbm{K}}[g^{\pm1},h^{\pm1},K^{\pm1}])$ $=3$, and
$$\gldim U_{g,h}\leq
\gldim({\mathbbm{K}}[g^{\pm1},h^{\pm1},K^{\pm1}])+2=5$$ by [9,
Theorem 7.5.3]. Hence $U_{g,h}$ is an Auslander regular and
Auslander Gorenstein ring by [3, Theorem 4.2].

Recall that an algebra $A$ with total quotient algebra $Q(A)$ is
said to be $\Tdeg$-stable if
$$\Tdeg(Q(A))=\Tdeg(A)=\text{GKdim}(A),$$ where $\text{GKdim}(A)$ is
the Gelfand-Kirillov dimension of $A$. By [15, Example 7.1],
$U_q(\mathfrak{sl}(2))$ is $\Tdeg$-stable, and
$\text{GKdim}(U_q(\mathfrak{sl}(2)))=3$. Since
$$U_{g,h}\cong \mathbbm{K}[g^{\pm1},h^{\pm1}]\otimes
U_q(\mathfrak{sl}(2))\cong
U_q(\mathfrak{sl}(2))[g,g^{-1}][h,h^{-1}],$$
$$\text{GKdim}(U_{g,h})=2+\text{GKdim}(U_q(\frak{sl}(2))=5,$$
and $U_{g,h}$ is $\Tdeg$-stable by [15, Theorem 1.1].
\end{proof}
\begin{remark}(1) Since $U_{g,h}\cong {\mathbbm{K}}[g^{\pm1},h^{\pm1}]\otimes U_q(\frak{sl}(2))$ as algebras, we call the Hopf
algebra $U_{g,h}$ a double loop quantum enveloping algebra.

(2) Since ${\mathbbm{K}}[g^{\pm1},h^{\pm1}]$ and $U_q(\frak{sl}(2))$
are Hopf algebras, ${\mathbbm{K}}[g^{\pm1},h^{\pm1}]\otimes
U_q(\frak{sl}(2))$ has a natural Hopf algebra structure. However, as
$$\Delta(E')=h^{-1}g^{-1}\otimes E'+E'\otimes hK',$$ and $$\Delta(F)=K'^{-1}hg\otimes F+F\otimes h^{-1},$$
by (1.5) and (1.6), the above isomorphism of algebras is not an
isomorphism of Hopf algebras, i.e., $U_{g,h}$ has a different
coproduct than the usual coproduct of the tensor product of the two
coalgebras.
\end{remark}
\begin{corollary}Suppose $q$ is not a root of unity. Then the center of $U_{g,h}$ is equal to
${\mathbbm{K}}[g^{\pm1},h^{\pm1},C]$, where
$C=FE+\frac{qK+q^{-1}K^{-1}g^2}{(q-q^{-1})^2}$.
\end{corollary}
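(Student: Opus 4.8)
The plan is to reduce the computation of the center of $U_{g,h}$ to the known computation of the center of $U_q(\mathfrak{sl}(2))$, exploiting the algebra isomorphism $U_{g,h}\cong {\mathbbm K}[g^{\pm1},h^{\pm1}]\otimes U_q(\mathfrak{sl}(2))$ of Proposition 2.2(1). Under the notation of that proposition, $E'=g^{-1}E$ and $K'=g^{-1}K$, so that $U_{g,h}$ is the twisted/ordinary tensor product of the commutative Laurent ring $A:={\mathbbm K}[g^{\pm1},h^{\pm1}]$ with $U:=U_q(\mathfrak{sl}(2))$ (the latter generated by $E',F,K'^{\pm1}$ with the standard relations). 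Since $A$ is central in this tensor product, the center of $A\otimes U$ is $A\otimes Z(U)$ — this is a general fact for a tensor product of a commutative algebra with any algebra over a field (one checks it on the $A$-basis expansion). I would state this as the first step.

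Second, I would recall the classical fact that when $q$ is not a root of unity, $Z(U_q(\mathfrak{sl}(2)))={\mathbbm K}[\Omega]$ is a polynomial ring in one variable, generated by the Casimir element. Written in the $E',F,K'$ generators, a standard choice is $\Omega'=FE'+\dfrac{qK'+q^{-1}K'^{-1}}{(q-q^{-1})^2}$ (equivalently $E'F+\dfrac{q^{-1}K'+qK'^{-1}}{(q-q^{-1})^2}$). Combining with Step~1 gives $Z(U_{g,h})=A\otimes{\mathbbm K}[\Omega']={\mathbbm K}[g^{\pm1},h^{\pm1},\Omega']$ as a $4$-variable algebra with three of the generators invertible.

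Third, I would translate $\Omega'$ back into the original generators $E,F,K$ and identify it, up to multiplication by a unit of $A$, with the element $C=FE+\dfrac{qK+q^{-1}K^{-1}g^2}{(q-q^{-1})^2}$ displayed in the corollary. Substituting $E'=g^{-1}E$, $K'=g^{-1}K$ (so $K'^{-1}=gK^{-1}$) into $\Omega'$ yields
\begin{align*}
\Omega'&=F(g^{-1}E)+\frac{q\,g^{-1}K+q^{-1}gK^{-1}}{(q-q^{-1})^2}
=g^{-1}\!\left(FE+\frac{qK+q^{-1}g^2K^{-1}}{(q-q^{-1})^2}\right)=g^{-1}C,
\end{align*}
using that $g$ is central and commutes with $F$. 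Hence $C=g\,\Omega'$, and since $g$ is a unit of $A$, we have ${\mathbbm K}[g^{\pm1},h^{\pm1},\Omega']={\mathbbm K}[g^{\pm1},h^{\pm1},C]$. This finishes the identification and therefore the proof.

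The only real subtlety is making sure the needed form of $Z(U_q(\mathfrak{sl}(2)))$ is available with the correct normalization: one must check that the $C$ in the statement really is $g$ times the standard Casimir in the primed variables, and in particular that it is genuinely central in $U_{g,h}$ (a direct check, using $EF-FE=\frac{K-K^{-1}g^2}{q-q^{-1}}$ and the commutation relations $KEK^{-1}=q^2E$, $KFK^{-1}=q^{-2}F$, $gE=Eg$, $gF=Fg$, is quick and can be done independently of the isomorphism as a sanity check). I would also remark that the hypothesis that $q$ is not a root of unity is used only to guarantee that $Z(U_q(\mathfrak{sl}(2)))$ is exactly ${\mathbbm K}[\Omega]$ with no extra central elements; everything else, including Step~1, is characteristic- and root-of-unity-free.
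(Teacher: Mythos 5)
Your proposal is correct and follows essentially the same route as the paper: both pass through the algebra isomorphism $U_{g,h}\cong{\mathbbm K}[g^{\pm1},h^{\pm1}]\otimes U_q(\mathfrak{sl}(2))$ of Proposition 2.2(1), invoke the classical description of $Z(U_q(\mathfrak{sl}(2)))$ as the polynomial ring on the Casimir, and then rewrite that Casimir in the original generators to land on $g^{-1}C$, absorbing the unit $g$ into ${\mathbbm K}[g^{\pm1},h^{\pm1}]$. The only difference is that you make explicit the intermediate general fact $Z(A\otimes U)=A\otimes Z(U)$ for $A$ commutative, which the paper uses implicitly; you also note that the paper's intermediate expression for $c_1$ has a typographical slip in the power of $(q-q^{-1})$ in the denominator, though the final formula for $C$ is correct.
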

\begin{proof}
Let
$c^{\prime}=F^{\prime}E^{\prime}+\frac{qK^{\prime}+q^{-1}K^{{\prime}-1}}{(q-q^{-1})^2}$,
where $K^{\prime\pm},E^{\prime},F^{\prime}$ are the Chevalley
generators of $U_q(\frak{sl}(2))$. Then the center of
$U_q(\frak{sl}(2))$ is generated by $c^{\prime}$ by [6, Theorem
VI.4.8]. Since $$U_{g,h}\cong \mathbbm{K}[g^{\pm1},h^{\pm1}]\otimes
U_q(\mathfrak{sl}(2))
$$  as algebras by Proposition 2.2,
the center of $U_{g,h}$ is isomorphic to
$\mathbbm{K}[g^{\pm1},h^{\pm1}]\otimes \mathbb{K}[c']$. So the
center of $U_{g,h}$ is equal to
${\mathbbm{K}}[g^{\pm1},h^{\pm1},c_1]$, where
$c_1=g^{-1}FE+\frac{g^{-1}qK+gq^{-1}K^{-1}}{q-q^{-1}}$. Hence the
center of $U_{g,h}$ is equal to
${\mathbbm{K}}[g^{\pm1},h^{\pm1},C]$, where
$C=FE+\frac{qK+q^{-1}K^{-1}g^2}{(q-q^{-1})^2}$.
\end{proof}
The element $C=FE+\frac{qK+q^{-1}K^{-1}g^2}{(q-q^{-1})^2}$ is called
a Casimir element of $U_{g,h}$.

\begin{proposition} There exists an anti-involution $i$ of $U_{g,h}$ that acts as the identity on all of $\mathbb{K}[K^{\pm1},g^{\pm1},h^{\pm1}]$.
\end{proposition}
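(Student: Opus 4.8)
The plan is to write down $i$ explicitly on the algebra generators, check that the prescription is compatible with the defining relations (so that it extends to an anti-endomorphism of $U_{g,h}$), and then observe that it squares to the identity. Concretely, I would set
$$i(E)=F,\qquad i(F)=E,\qquad i(K^{\pm1})=K^{\pm1},\qquad i(g^{\pm1})=g^{\pm1},\qquad i(h^{\pm1})=h^{\pm1},$$
and extend it by reversing the order of products. To make sense of ``extend'', I first note that, by Theorem 2.1, $U_{g,h}$ is isomorphic to the $\mathbb{K}$-algebra $\bar U$ presented by the generators $g^{\pm1},h^{\pm1},E,F,K^{\pm1}$ subject to the relations (1.1)--(1.4): there is a natural surjection $\bar U\to U_{g,h}$, the ordered monomials $F^lK^mg^nh^sE^t$ span $\bar U$ (the relations suffice to bring any word into this form, with $EF=FE+\frac{K-K^{-1}g^2}{q-q^{-1}}$ providing the crucial reordering step), and these monomials are linearly independent in $U_{g,h}$ by the PBW theorem, so the surjection is an isomorphism. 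Hence it is enough to verify that the assignment above carries each of the relations (1.1)--(1.4) to a valid identity in $U_{g,h}$.

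That verification is short. The group-like relations (1.1) are clearly preserved. Each ``commuting'' relation among $g,h,K,K^{-1}$ and $E,F$ is sent by $i$ to a relation of the same type: for instance $i(gE)=i(E)i(g)=Fg$ and $i(Eg)=i(g)i(E)=gF$, which agree because $gF=Fg$; the relations $gh=hg$, $gK=Kg$, $hK=Kh$, $gE=Eg$, $hE=Eh$, $gF=Fg$, $hF=Fh$ are all handled this way. For $KEK^{-1}=q^2E$ one gets $i(KEK^{-1})=i(K^{-1})i(E)i(K)=K^{-1}FK=q^2F=i(q^2E)$, using $KFK^{-1}=q^{-2}F$; the relation $KFK^{-1}=q^{-2}F$ is symmetric. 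The only relation where the specific shape of $U_{g,h}$ (rather than of $U_q(\mathfrak{sl}(2))$) intervenes is (1.4): on the one hand $i(EF-FE)=i(F)i(E)-i(E)i(F)=EF-FE$, and on the other $i\!\left(\frac{K-K^{-1}g^2}{q-q^{-1}}\right)=\frac{K-g^2K^{-1}}{q-q^{-1}}=\frac{K-K^{-1}g^2}{q-q^{-1}}$, the last step using that $g$ commutes with $K$. So (1.4) is preserved, and $i$ extends to an algebra anti-endomorphism of $U_{g,h}$.

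Finally, $i^2$ fixes each generator ($i^2(E)=i(F)=E$, $i^2(F)=i(E)=F$, and $i^2$ fixes $K^{\pm1},g^{\pm1},h^{\pm1}$), so $i^2=\mathrm{id}$; in particular $i$ is bijective, hence an anti-automorphism, and therefore an anti-involution, which by construction restricts to the identity on $\mathbb{K}[K^{\pm1},g^{\pm1},h^{\pm1}]$. An alternative, essentially equivalent argument is to transport the anti-involution of $U_q(\mathfrak{sl}(2))$ determined by $E\mapsto F$, $F\mapsto E$, $K\mapsto K$ along the algebra isomorphism of Proposition 2.2, using that $\mathrm{id}\otimes\tau$ is an anti-involution of $\mathbb{K}[g^{\pm1},h^{\pm1}]\otimes U_q(\mathfrak{sl}(2))$ for any anti-involution $\tau$ of $U_q(\mathfrak{sl}(2))$; then the only thing left to check is that the images of $K$, $g$, $h$ come back fixed. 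I do not expect a genuine obstacle here: the sole point deserving care is the justification that $U_{g,h}$ is defined by the presentation (1.1)--(1.4), which is exactly what Theorem 2.1 provides, and after that every step is a one-line computation.
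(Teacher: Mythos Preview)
Your proof is correct. The only difference from the paper is the particular anti-involution chosen: the paper sets $i(E)=-KF$ and $i(F)=-EK^{-1}$ (with $K,g,h$ fixed), whereas you take the simpler swap $i(E)=F$, $i(F)=E$. Both prescriptions pass the same checklist of relations (1.1)--(1.4); the only place the choices diverge is in the verification of $KEK^{-1}=q^2E$ and its partner, and your computation there is fine. Your choice has the virtue of being the obvious Chevalley-type anti-involution and needs no sign or $K$-twist bookkeeping; the paper's choice also works and is the one used later in Section~4, but for the bare existence statement nothing is gained by it. Your alternative transport argument via Proposition~2.2 is also sound, though note that it produces yet a third anti-involution (with $i(E)=gF$, $i(F)=g^{-1}E$), not the same one as your direct definition.
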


\begin{proof} Let $i(E)=-KF$, $i(F)=-EK^{-1}$, $i(K^{\pm1})=K^{\pm1}$, $i(g^{\pm1})=g^{\pm1}$, $i(h^{\pm1})=h^{\pm1}$. Extend $i$
by additivity and multiplicativity. Then $i$ is an anti-involution
of $U_{g,h}$, which acts as the identity on all of
$\mathbb{K}[K^{\pm1},g^{\pm1},h^{\pm1}]$.
\end{proof}

Suppose $M$ is a finitely generated module over an algebra $A$. Then
the grade of $M$, denoted by $j(M)$, is defined to be
$$j(M):=\min\{j\geq 0|\Ext^j_A(M,A)\neq 0\}.$$Recall that an algebra
$A$ is Cohen-Macaulay if $$j(M)+\text{GKdim}(M)=\text{GKdim}(A)$$
for every nonzero finitely generated $A$-module $M$.
\begin{proposition} The algebra $U_{g,h}$ is a Cohen-Macaulay algebra with
$\gldim U_{g,h}=5$.\end{proposition}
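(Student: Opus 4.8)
The plan is to establish the Cohen--Macaulay property first and then read off $\gldim U_{g,h}=5$: Proposition 2.2 already gives $\gldim U_{g,h}\le 5$, so the Cohen--Macaulay equality need only supply the matching lower bound. For the Cohen--Macaulay property, one observation is that the very hypotheses under which [3, Theorem 4.2] was applied in the proof of Proposition 2.2 typically also yield Cohen--Macaulayness (such transfer theorems usually deliver Auslander-regularity and Cohen--Macaulayness in tandem). If a self-contained argument is wanted, I would use the description of $U_{g,h}$ from the proof of Theorem 2.1 as the iterated Ore extension $R[F;\sigma,\delta][E;\sigma',\delta']$, where $R=\mathbbm{K}[K^{\pm1},g^{\pm1},h^{\pm1}]$ and $\sigma,\sigma'$ are automorphisms, and filter $U_{g,h}$ by assigning $R$ degree $0$ and $E,F$ degree $1$. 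Since the right-hand side $(K-K^{-1}g^2)/(q-q^{-1})$ of $EF-FE$ lies in $R$, of degree $0<2$, and since $\delta,\delta'$ strictly lower degree, the associated graded ring is the iterated skew polynomial ring $\operatorname{gr}U_{g,h}\cong R[\bar F;\sigma][\bar E;\sigma']$ with zero derivations, in which $\bar E,\bar F,g,h$ pairwise commute and $K$ $q$-commutes with $\bar E$ and $\bar F$. Now $R$ is an affine commutative Noetherian domain of Krull dimension $3$, hence regular, hence Auslander--Gorenstein and Cohen--Macaulay with $\gldim(R)=\text{GKdim}(R)=3$; an Ore extension by an automorphism preserves Auslander-regularity and Cohen--Macaulayness and raises $\gldim$ and $\text{GKdim}$ each by one (see [3]), so $\operatorname{gr}U_{g,h}$ is Auslander-regular and Cohen--Macaulay with $\gldim(\operatorname{gr}U_{g,h})=\text{GKdim}(\operatorname{gr}U_{g,h})=5$. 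Finally $\operatorname{gr}U_{g,h}$ is Noetherian, so finitely generated $U_{g,h}$-modules carry good filtrations, and the filtered--graded transfer principle gives $j_{U_{g,h}}(M)=j_{\operatorname{gr}U_{g,h}}(\operatorname{gr}M)$ and $\text{GKdim}_{U_{g,h}}(M)=\text{GKdim}_{\operatorname{gr}U_{g,h}}(\operatorname{gr}M)$; hence $j(M)+\text{GKdim}(M)=5=\text{GKdim}(U_{g,h})$ for every nonzero finitely generated $M$. (Alternatively, argue via the tensor decomposition $U_{g,h}\cong\mathbbm{K}[g^{\pm1},h^{\pm1}]\otimes U_q(\mathfrak{sl}(2))$ of Proposition 2.2, using that $U_q(\mathfrak{sl}(2))$ is Auslander-regular and Cohen--Macaulay with $\gldim=\text{GKdim}=3$ and that tensoring with the commutative regular ring $\mathbbm{K}[g^{\pm1},h^{\pm1}]$ preserves these properties while adding $2$ to each dimension.)

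It then remains to show $\gldim U_{g,h}\ge 5$, i.e.\ to exhibit a finitely generated module of projective dimension $5$. The algebra $U_{g,h}$ has one-dimensional modules, for instance those on which $E$ and $F$ act as $0$ and $K,g,h$ as suitable nonzero scalars (and, more generally, the finite-dimensional modules noted in the Introduction); pick any such module $M$, so that $\text{GKdim}(M)=0$. By the Cohen--Macaulay equality just proved, $j(M)=\text{GKdim}(U_{g,h})-\text{GKdim}(M)=5$, so $\Ext^5_{U_{g,h}}(M,U_{g,h})\neq 0$, whence $\gldim U_{g,h}\ge 5$; together with Proposition 2.2 this gives $\gldim U_{g,h}=5$. (Independently of Cohen--Macaulayness, $\gldim U_{g,h}\ge 5$ also follows from a K\"unneth computation for the trivial module over $\mathbbm{K}[g^{\pm1},h^{\pm1}]\otimes U_q(\mathfrak{sl}(2))$, since $\Ext^2_{\mathbbm{K}[g^{\pm1},h^{\pm1}]}(\mathbbm{K},\mathbbm{K})\neq 0$ and $\Ext^3_{U_q(\mathfrak{sl}(2))}(\mathbbm{K},\mathbbm{K})\neq 0$.)

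The main obstacle I anticipate is the Cohen--Macaulay step: verifying that the standard preservation and filtered--graded transfer theorems genuinely apply --- that $\operatorname{gr}U_{g,h}$ is exactly the skew polynomial ring claimed (a careful but routine computation with the defining relations), that it is Noetherian of finite global dimension, and that the Auslander and Cohen--Macaulay conditions really do survive an Ore extension by an automorphism and the passage from $\operatorname{gr}U_{g,h}$ back to $U_{g,h}$. Everything else --- the bound $\gldim U_{g,h}\le 5$, the value $\text{GKdim}(U_{g,h})=5$, and the existence of a finite-dimensional $U_{g,h}$-module --- is already in hand, so the global dimension assertion is essentially a corollary of the Cohen--Macaulay one.
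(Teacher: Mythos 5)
Your concluding step is the same as the paper's: once Cohen--Macaulayness is in hand, take a one-dimensional module $M$ (so $\text{GKdim}\,M=0$), conclude $j(M)=5$ and hence $\Ext^5_{U_{g,h}}(M,U_{g,h})\neq 0$, giving $\gldim U_{g,h}\geq 5$; combined with the bound $\gldim U_{g,h}\leq 5$ from Proposition 2.2 this yields equality. The paper does precisely this via [2, Exercise II.9.D] with the trivial module.

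Where you diverge is the route to the Cohen--Macaulay property. The paper avoids filtration arguments altogether: it introduces the auxiliary iterated Ore extension $A={\mathbbm{K}}[g,h,u,v,K,L][F;\alpha][E;\alpha,\delta]$ over an \emph{honest polynomial} base, invokes [2, Lemma II.9.10] to get that $A$ is Auslander-regular and Cohen--Macaulay, and then realizes $U_{g,h}\cong A/(gu-1,hv-1,KL-1)$ and applies [2, Lemma II.9.11] (passing to a factor by a regular sequence of central elements, i.e.\ inverting $g,h,K$) to transfer Auslander--Gorensteinness and Cohen--Macaulayness down to $U_{g,h}$. Your main argument instead filters $U_{g,h}$ directly by $E,F$-degree over the Laurent ring $R$, identifies $\operatorname{gr}U_{g,h}$ with a skew polynomial ring with zero derivations (correct, since the $[E,F]$ relation has degree-$0$ right side), establishes Auslander-regularity and CM of the graded ring, and transfers back via the filtered--graded machinery of [3]. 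Both are legitimate; the paper's quotient trick buys a shorter argument because [2, Lemmas II.9.10--11] are designed exactly for this pattern, while your filtration approach is more self-contained and makes the ``honest'' triangular structure visible, at the cost of needing to verify that the good-filtration/transfer hypotheses of [3] hold. Your parenthetical alternative via the tensor decomposition $U_{g,h}\cong\mathbbm{K}[g^{\pm1},h^{\pm1}]\otimes U_q(\mathfrak{sl}(2))$ is also a reasonable independent route, though you would need to cite or prove the preservation of CM under tensoring with a commutative regular affine algebra. One small caution: you attribute the preservation of Auslander-regularity and CM under Ore extensions to [3], but the statement you actually use is the one in [2, Lemma II.9.10]; [3] is the reference for the filtered--graded \emph{transfer} step.
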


\begin{proof}Let $A={\mathbbm{K}}[g,h,u,v,K,L][F;\alpha][E;\alpha,\delta]$, where $\alpha|_{{\mathbbm{K}}[g,h,u,v]}=\id$,
$\alpha(K)=$ $q^2K,$ $\alpha(L)=q^{-2}L$, $\alpha(F)=F$,
$\delta(F)=\frac{K-L}{q-q^{-1}}$, and
$\delta({\mathbbm{K}}[g,h,u,v,K,L])=0$. Then $A$ is
Auslander-regular and Cohen-Macaulay by [2, Lemma II.9.10]. Since
$$U_{g,h}\cong A/(gu-1,hv-1,KL-1),$$ $U_{g,h}$ is Auslander-Gorenstein and Cohen-Macaulay
by [2, Lemma II.9.11]. Let ${\mathbbm{K}}$ be the trivial
$U_{g,h}$-module defined by $a\cdot 1=\varepsilon(a)1$. Then
$\text{GKdim}(\mathbbm{K})=0$ and $\gldim(U_{g,h})=5$ by [2,
Exercise II.9.D].\end{proof}

In the presentation for $U_{g,h}$ given in Section 1, the generators
$K^{\pm 1}$, and the generators $E,F$ play a different role
respectively. Similar to [4], we write down an equitable
presentation for $U_{g,h}$ as follows.
\begin{theorem}The algebra $U_{g,h}$ is isomorphic to the unital associative $\mathbbm{K}$-algebra
with generators $x^{\pm 1}$, $y,z$; $u^{\pm1},v^{\pm1}$ and the
following relations:

\begin{eqnarray}x^{-1}x=xx^{-1}=1,\quad u^{-1}u=uu^{-1}=1,\quad
v^{-1}v=vv^{-1}=1,
\end{eqnarray}
\begin{eqnarray} ux=xu,\quad uy=yu,\quad uz=zu,\quad uv=vu,\end{eqnarray}
\begin{eqnarray}   vx=xv,\quad yv=vy,\quad zv=vz,\end{eqnarray}
\begin{eqnarray} \frac{qxy-q^{-1}yx}{q-q^{-1}}=1,\end{eqnarray}
\begin{eqnarray} \frac{qzx-q^{-1}xz}{q-q^{-1}}=1,\end{eqnarray}
\begin{eqnarray} \frac{qyz-q^{-1}zy}{q-q^{-1}}=1.\end{eqnarray}
\end{theorem}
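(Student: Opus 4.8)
The plan is to establish the isomorphism by exhibiting mutually inverse algebra homomorphisms between $U_{g,h}$ and the algebra $\mathcal{E}$ presented by generators $x^{\pm 1}, y, z, u^{\pm 1}, v^{\pm 1}$ subject to relations (2.x). Following the model of the equitable presentation of $U_q(\mathfrak{sl}(2))$ in [4], I would first guess the correspondence $x \mapsto K^{-1}g$, $y \mapsto K^{-1}g + (q-q^{-1})Eg^{-1}$ (up to a unit), $z \mapsto K^{-1}g - q^{-1}(q-q^{-1})Fg$, together with $u \mapsto g$, $v \mapsto h$ (exact scalars to be fixed by checking the relations). The key arithmetic fact is that in $U_{g,h}$ the element $x := K^{-1}g$ is invertible with inverse $Kg^{-1}$, and that one can solve $\frac{qxy - q^{-1}yx}{q-q^{-1}} = 1$ for $y$ in terms of $x^{-1}$ and $E$ using relation (1.4); likewise for $z$ in terms of $x^{-1}$ and $F$. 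Since $g,h$ are central and group-like they map to the central units $u,v$ with no compatibility issues — essentially the whole problem reduces to the known $\mathfrak{sl}(2)$ computation tensored trivially with $\mathbb{K}[g^{\pm 1}, h^{\pm 1}]$.

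Concretely, I would proceed in the following steps. First, define $\psi : \mathcal{E} \to U_{g,h}$ on generators by the formulas above and verify that all six families of relations (2.13)--(2.18) are satisfied: relations (2.14), (2.15) are immediate because $g, h$ are central; relations (2.16), (2.17), (2.18) each unwind, via (1.2)--(1.4), to the single quantum-Serre-type identity $KEK^{-1} = q^2 E$, $KFK^{-1} = q^{-2}F$, and $EF - FE = (K - K^{-1}g^2)/(q-q^{-1})$, after clearing the powers of $g$. Second, define $\phi : U_{g,h} \to \mathcal{E}$ by inverting these formulas: set $K^{-1}g \mapsto x$, hence $K \mapsto g u x^{-1}$ where $g \mapsto u$; solve for $E$ and $F$ from the expressions for $y$ and $z$, giving $E \mapsto$ (a $\mathbb{K}$-linear combination of $yu$ and $xu$) and similarly for $F$ in terms of $z, x, u$. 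Then check that $\phi$ respects relations (1.1)--(1.4) of $U_{g,h}$; this again reduces to (2.16)--(2.18). Third, verify $\phi\circ\psi = \mathrm{id}$ and $\psi\circ\phi = \mathrm{id}$ on generators, which is a direct substitution. Using that the listed generating sets actually generate (for $U_{g,h}$ this is clear, for $\mathcal{E}$ one notes from (2.13) that $x, u, v$ are units), this proves both maps are well-defined algebra isomorphisms.

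The main obstacle is purely bookkeeping: pinning down the correct scalar normalizations so that the three ``equitable'' relations (2.16)--(2.18) come out with coefficient exactly $1$ rather than some power of $q$ or some unit in $\mathbb{K}[g^{\pm1}]$. In the one-parameter case [4] this is handled by defining $x = K^{-1}$, $y = K^{-1} + (q - q^{-1}) Eq^{?}$, $z = K^{-1} - q^{-1}(q-q^{-1})F$ with particular powers of $q$; here the presence of the factor $g^2$ in (1.4) forces an asymmetric insertion of $g$'s into the definitions of $y$ and $z$, and one must be careful that $\frac{qxy - q^{-1}yx}{q-q^{-1}}$, $\frac{qzx - q^{-1}xz}{q-q^{-1}}$ and $\frac{qyz - q^{-1}zy}{q-q^{-1}}$ all collapse to $1$. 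Once the right ansatz is found — which amounts to transporting the equitable generators of $U_q(\mathfrak{sl}(2))$ through the algebra isomorphism of Proposition 2.2 and then conjugating/scaling by suitable powers of $g$ — the verification of every relation is a short monomial computation, and the inverse map is read off by linear algebra. I would not expect any conceptual difficulty beyond this normalization.
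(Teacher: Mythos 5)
Your overall method --- exhibit explicit mutually inverse algebra homomorphisms $\Phi:\mathcal{E}\to U_{g,h}$ and $\Psi:U_{g,h}\to\mathcal{E}$, with the assignments obtained by transporting the equitable generators of $U_q(\mathfrak{sl}(2))$ through the isomorphism of Proposition 2.2 --- is exactly what the paper does, so the strategy is sound. However, the defects in your tentative ansatz are structural, not merely a matter of scalar normalization as you suggest. The paper sets $\Phi(x)=g^{-1}K$ (the \emph{inverse} of your $K^{-1}g$), puts $F$ into $y$ and $E$ into $z$ (the opposite of your assignment), and sends $u\mapsto g^{-1}$, $v\mapsto h$; concretely
$$\Phi(y)=K^{-1}g+(q-q^{-1})F,\qquad \Phi(z)=K^{-1}g-q(q-q^{-1})K^{-1}E.$$
With your $x=K^{-1}g$ and $y=K^{-1}g+(q-q^{-1})Eg^{-1}$, one computes $xy=K^{-2}g^2+(q-q^{-1})K^{-1}E$ and $yx=K^{-2}g^2+(q-q^{-1})q^2K^{-1}E$, so
$$\frac{qxy-q^{-1}yx}{q-q^{-1}}=K^{-2}g^2\neq 1,$$
and no rescaling of $E$ or of $y$ can remove the non-scalar obstruction $K^{-2}g^2$; you genuinely need $x=g^{-1}K$ so that the constant term of $xy$ is $1$. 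Once you take the corrected assignments (which is precisely what transporting the standard equitable triple $x_0=K'$, $y_0=K'^{-1}+(q-q^{-1})F'$, $z_0=K'^{-1}(1-q(q-q^{-1})E')$ through Proposition 2.2 gives), the rest of your plan matches the paper: define $\Psi(K)=u^{-1}x$, $\Psi(F)=(y-x^{-1})/(q-q^{-1})$, $\Psi(E)=(1-xz)/((q-q^{-1})qu)$, $\Psi(g)=u^{-1}$, $\Psi(h)=v$, verify all relations under both maps, and check $\Phi\Psi=\Psi\Phi=\mathrm{id}$ on generators.
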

\begin{proof}Let $\mathscr{U}_{u,v}$ be the algebra generated by $x^{\pm1}$, $y$, $z$, $u^{\pm1}$, $v^{\pm1}$ satisfying the relations
from (2.1) to (2.6). Let us define
$\Phi(x^{\pm1})=g^{\mp1}K^{\pm1}$, $\Phi(y)=K^{-1}g+F(q-q^{-1})$,
$\Phi(z)$ $=$ $K^{-1}g-K^{-1}Eq(q-q^{-1})$,
$\Phi(u^{\pm1})=g^{\mp1}$, $\Phi(v^{\pm1})=h^{\pm1}$, and extend
$\Phi$ by additivity and multiplicativity. Then $\Phi$ is a
homomorphism of algebras from $\mathscr{U}_{u,v}$ to $U_{g,h}$.

Define $\Psi(K^{\pm1})=u^{\mp1}x^{\pm1}$,
$\Psi(F)=\frac{y-x^{-1}}{q-q^{-1}}$,
$\Psi(E)=\frac{1-xz}{(q-q^{-1})qu}$, $\Psi(g)=u^{-1}$, and
$\Psi(h)=v$. We extend $\Psi$ by additivity and multiplicativity. It
is routine to check that $\Psi$ is a homomorphism of algebras from
$U_{g,h}$ to $\mathscr{U}_{u,v}$. Since $\Phi\Psi$  fixes each of
the generators $E, F,K^{\pm1}, g^{\pm1}, h^{\pm1}$ of $U_{g,h}$,
$\Phi\Psi=\id$. Similarly we can check that $\Psi\Phi=\id$. So
$\Phi$ is the inverse of $\Psi$.
\end{proof}

Since $\mathscr{U}_{u,v}$ is isomorphic to $U_{g,h}$ as algebras, we
can regard $U_{g,h}$ as an algebra generated by
$x^{\pm1},u^{\pm1},v^{\pm1}$, $y$ and $z$ with relations
(2.1)--(2.6). To make the above algebra isomorphisms $\Phi,\Psi$
into isomorphisms of Hopf algebras, we only need to define the other
operations of the Hopf algebra $U_{g,h}$ with these new generators
as follows:

\begin{eqnarray} \Delta(x^{\pm1})=x^{\pm1}\otimes x^{\pm1},\end{eqnarray}
\begin{eqnarray} \Delta(u^{\pm1})=u^{\pm1}\otimes u^{\pm1},\end{eqnarray}
\begin{eqnarray} \Delta(v^{\pm1})=v^{\pm1}\otimes v^{\pm1},\end{eqnarray}
\begin{eqnarray} \Delta(y)&=x^{-1}\otimes (x^{-1}-v^{-1})+u^{-1}vx^{-1}\otimes (y-x^{-1})+y\otimes v^{-1},\end{eqnarray}
\begin{eqnarray} \Delta(z)&=x^{-1}\otimes x^{-1}+uv^{-1}x^{-1}\otimes (z-x^{-1})+(z-x^{-1})\otimes v,\end{eqnarray}
\begin{eqnarray} \varepsilon(x^{\pm1})=\varepsilon(u^{\pm1})=\varepsilon(v^{\pm1})=1,\end{eqnarray}
\begin{eqnarray} \varepsilon(y)=\varepsilon(z)=1,\end{eqnarray}
and
\begin{eqnarray}S(x^{\pm})=x^{\mp1},\qquad S(u^{\pm1})=u^{\mp1},\qquad S(v^{\pm1})=v^{\mp1},\end{eqnarray}
\begin{eqnarray}S(y)=x-x^{-1}y+u,\qquad  S(z)=x+u^{-1}-u^{-1}xz.\end{eqnarray}
Then one can check that the above isomorphisms $\Phi,\Psi$ are
isomorphisms of Hopf algebras. For example,
$\Delta(\Psi(g^{\mp1}K^{\pm1}))=\Delta(x^{\pm1})=x^{\pm1}\otimes
x^{\pm1}=(\Psi\otimes\Psi)\Delta(g^{\mp1}K^{\pm1})$.

\section{Finite-dimensional representations of $U_{g,h}$}
Let $q$ be a nonzero element in an algebraically closed field
${\mathbbm{K}}$ with characteristic zero. Moreover, we assume that
$q$ is not a root of unity. The main purpose of this section is to
classify all extensions between two finite-dimensional simple
$U_{g,h}$-modules. Let us start with  a description of the
finite-dimensional simple $U_{g,h}$-modules.

For any three elements
$\lambda,\alpha,\beta\in{\mathbbm{K}}^{\times}(=\mathbbm{K}\setminus
\{0\})$ and any $U_{g,h}$-module $V$, let
$$V^{\lambda,\alpha,\beta}=\{v\in V|Kv=\lambda v,gv=\alpha
v,hv=\beta v\}.$$ The $(\lambda,\alpha,\beta)$ is called a weight of
$V$ if $V^{\lambda,\alpha,\beta}\neq 0$. A nonzero vector in
$V^{\lambda,\alpha,\beta}$ is called a weight vector with weight
$(\lambda,\alpha,\beta)$.

The next result is proved by a standard argument.

\begin{lemma} We have $EV^{\lambda,\alpha,\beta}\subseteq
V^{q^2\lambda,\alpha,\beta}$ and $FV^{\lambda,\alpha,\beta}\subseteq
V^{q^{-2}\lambda,\alpha,\beta}$.
\end{lemma}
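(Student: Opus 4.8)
The plan is to verify the two containments directly from the defining relations of $U_{g,h}$, exactly as one does for the ordinary quantum group. Let $(\lambda,\alpha,\beta)$ be a weight of $V$ and take $v\in V^{\lambda,\alpha,\beta}$, so that $Kv=\lambda v$, $gv=\alpha v$, $hv=\beta v$. First I would check how $K$, $g$, $h$ act on $Ev$. Using the relation $KEK^{-1}=q^2E$, i.e. $KE=q^2EK$, we get $K(Ev)=q^2E(Kv)=q^2\lambda\,(Ev)$. Using $gE=Eg$ from (1.2) we get $g(Ev)=E(gv)=\alpha\,(Ev)$, and using $hE=Eh$ from (1.2) we get $h(Ev)=E(hv)=\beta\,(Ev)$. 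Hence $Ev\in V^{q^2\lambda,\alpha,\beta}$, which gives $EV^{\lambda,\alpha,\beta}\subseteq V^{q^2\lambda,\alpha,\beta}$.

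The second containment is entirely analogous. For $v\in V^{\lambda,\alpha,\beta}$, the relation $KFK^{-1}=q^{-2}F$, i.e. $KF=q^{-2}FK$, yields $K(Fv)=q^{-2}F(Kv)=q^{-2}\lambda\,(Fv)$; the relations $gF=Fg$ and $hF=Fh$ from (1.3) give $g(Fv)=\alpha\,(Fv)$ and $h(Fv)=\beta\,(Fv)$. Therefore $Fv\in V^{q^{-2}\lambda,\alpha,\beta}$, and so $FV^{\lambda,\alpha,\beta}\subseteq V^{q^{-2}\lambda,\alpha,\beta}$.

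There is essentially no obstacle here: the only thing being used is that $E$ and $F$ commute with $g$ and $h$ and $q^{\pm 2}$-commute with $K$, all of which are among the defining relations (1.2) and (1.3). The one point worth a sentence of care is that $EV^{\lambda,\alpha,\beta}$ and $FV^{\lambda,\alpha,\beta}$ need not be nonzero, so in general $(q^{\pm2}\lambda,\alpha,\beta)$ need not be a weight of $V$; the statement is only an inclusion of (possibly zero) weight spaces, which is exactly how it is phrased. This is why the result is flagged as "proved by a standard argument."
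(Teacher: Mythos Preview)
Your proof is correct and is exactly the ``standard argument'' the paper alludes to: the paper does not write out a proof of this lemma at all, merely remarking that it follows by a standard computation from the relations (1.2)--(1.3). Your verification using $KE=q^{2}EK$, $KF=q^{-2}FK$, and the commutativity of $g,h$ with $E,F$ is precisely that computation.
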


\begin{definition} Let $V$ be a $U_{g,h}$-module and $(\lambda,\alpha,\beta)\in{\mathbbm{K}}^{\times 3}$.
A nonzero vector $v$ of $V$ is a highest weight vector of weight
$(\lambda,\alpha,\beta)$ if $$Ev=0,\quad Kv=\lambda v, \quad
gv=\alpha v,\quad hv=\beta v.$$ A $U_{g,h}$-module $V$ is a standard
cyclic module with highest weight $(\lambda,\alpha,\beta)$ if it is
generated by a highest weight vector $v$ of weight
$(\lambda,\alpha,\beta)$.
\end{definition}

\begin{proposition} Any nonzero finite-dimensional $U_{g,h}$-module contains a highest
weight vector. Moreover, the endomorphisms induced by $E$ and $F$
are nilpotent.
\end{proposition}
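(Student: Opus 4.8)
The plan is to mimic the classical proof for $U_q(\mathfrak{sl}(2))$. First I would use the fact that $\mathbb{K}$ is algebraically closed to find a common eigenvector. Observe that the group $G = \{g^m h^n\}$ and the element $K^{\pm 1}$ act on the finite-dimensional space $V$ as commuting invertible operators (by relations (1.2), (1.3), $K$ commutes with $g$ and $h$), so they have a simultaneous eigenvector $w$; say $Kw = \lambda w$, $gw = \alpha w$, $hw = \beta w$ with $\lambda,\alpha,\beta \in \mathbb{K}^\times$ (nonzero since these operators are invertible). Thus $V$ has at least one weight $(\lambda,\alpha,\beta)$.

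Next I would show $E$ acts nilpotently. By Lemma 3.1, $E$ maps $V^{\lambda,\alpha,\beta}$ into $V^{q^2\lambda,\alpha,\beta}$, so iterating, $E^k w$ lies in $V^{q^{2k}\lambda,\alpha,\beta}$. Since $q$ is not a root of unity, the scalars $q^{2k}\lambda$ are pairwise distinct for distinct $k \geq 0$, hence the nonzero vectors among $w, Ew, E^2w, \dots$ are linearly independent (they are eigenvectors of $K$ for distinct eigenvalues). As $\dim V < \infty$, there is a minimal $n$ with $E^n w = 0$; then $v := E^{n-1}w$ is nonzero and satisfies $Ev = 0$, $Kv = q^{2(n-1)}\lambda v$, $gv = \alpha v$, $hv = \beta v$ (since $g,h$ commute with $E$ by (1.2), (1.3)), so $v$ is a highest weight vector. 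The same distinct-eigenvalue argument applied to $E$ on all of $V$ shows $E$ is nilpotent: the generalized eigenspaces of $K$ are finite in number and $E$ shifts the eigenvalue by $q^2$ each time, so a high enough power of $E$ must vanish on each of them, hence on $V$. Symmetrically, $F$ lowers the $K$-eigenvalue by the factor $q^{-2}$ (Lemma 3.1 again), and the same reasoning gives that $F$ acts nilpotently.

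The only mildly delicate point is the nilpotency of $E$ and $F$ as operators on the whole module (not just along a single weight string). The clean way is: decompose $V = \bigoplus_\mu V_{[\mu]}$ into generalized $K$-eigenspaces; since $E$ and $F$ commute with $g$ and $h$ and conjugate $K$ to $q^{\pm 2}K$, they send the $\mu$-string into the $q^{\pm 2}\mu$-string, and because $q$ is not a root of unity only finitely many powers $q^{2j}\mu$ can be eigenvalues of $K$, so $E^N = F^N = 0$ for $N$ large. This is the step I would write out most carefully; the rest is routine. I would also remark that one may alternatively invoke the algebra isomorphism $U_{g,h} \cong \mathbb{K}[g^{\pm 1},h^{\pm 1}] \otimes U_q(\mathfrak{sl}(2))$ of Proposition 2.2 together with the corresponding classical statement for $U_q(\mathfrak{sl}(2))$, but the direct argument above is self-contained and just as short.
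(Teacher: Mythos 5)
Your proof is correct and takes essentially the same route as the paper: both start by producing a simultaneous eigenvector for the commuting invertible operators $K,g,h$, then use Lemma~3.1 together with the hypothesis that $q$ is not a root of unity to ladder up to a vector killed by $E$. The paper delegates the second half (and the nilpotency of $E,F$) to Kassel [6, Proposition VI.3.3], whereas you spell out the generalized-$K$-eigenspace argument explicitly; that is a welcome expansion, not a departure.
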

\begin{proof}
By Lie's theorem, there is a nonzero vector $w\in V$ and
$(\mu,\alpha,\beta)$ $\in \mathbbm{K}^{\times 3}$ such that
$$Kw=\mu w,\qquad gw=\alpha w,\qquad hw=\beta w.$$
In fact, there is an elementary and more direct proof as follows.
Since ${\mathbbm{K}}$ is algebraically closed and $V$ is
finite-dimensional, there is a nonzero vector $v\in V$ such that
$Kv=\mu v$ for some element $\mu\in \mathbb{K}$. Moreover $\mu\in
\mathbb{K}^{\times}$ as $K$ is invertible. Let $$V_{\mu}=\{v\in
V|Kv=\mu v\}\not=0.$$ Then $V_{\mu}$ is also a finite-dimensional
vector space. For any $v\in V_{\mu }$, we have $$K(gv)=g(Kv)=\mu
gv.$$ So $gv\in V_{\mu }$ and $g$ induces a linear transformation on
the nonzero finite-dimensional vector space $V_{\mu}$. There is a
nonzero vector $v'\in V_{\mu}$ such that $gv'=\alpha v'$ for some
nonzero element $\alpha\in \mathbb{K}$. Let $V_{\mu,\alpha}=\{v'\in
V_{\mu}|gv'=\alpha v'\}$. Then $V_{\mu,\alpha}$ is also a nonzero
finite-dimensional linear space. Similarly we can prove that
$h(V_{\mu,\alpha})\subseteq V_{\mu,\alpha}$ as $gh=hg$, $hK$ $=Kh$
by (1.2) and (1.3). Hence there exists a nonzero vector $w\in
V_{\mu,\alpha}$ and $(\mu,\alpha,\beta)$ $\in \mathbbm{K}^{\times
3}$ such that
$$Kw=\mu w,\qquad gw=\alpha w,\qquad hw=\beta w.$$
The proof now follows [6, Proposition VI.3.3], using Lemma 3.1.
\end{proof}

For any positive integer $m$, let
$[m]=\frac{q^{m}-q^{-m}}{q-q^{-1}}$, and $[m]!=[1][2]\cdots[m]$.
Similar to the proof of [6, Lemma VI.3.4], we get the following:
\begin{lemma} Let $v$ be a highest weight vector of weight $(\lambda,\alpha,\beta)$.
Set $v_p=\frac1{[p]!}F^pv$ for $p>0$ and $v_0=v$. Then
$$Kv_p=q^{-2p}\lambda v_p, \qquad gv_p=\alpha  v_p, \qquad Fv_{p-1}=[p]v_p,\qquad hv_{p}=\beta v_p$$
and \begin{align}Ev_p=\frac
{q^{-(p-1)}\lambda-q^{p-1}\lambda^{-1}\alpha^2}{q-q^{-1}}
v_{p-1}.\end{align}
\end{lemma}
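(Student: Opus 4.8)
The plan is to imitate the classical computation for $U_q(\mathfrak{sl}(2))$ (as in [6, Lemma VI.3.4]), adapting the commutation relations so that the extra group-like generators $g,h$ are carried along correctly. Since $v$ is a highest weight vector of weight $(\lambda,\alpha,\beta)$, the vectors $v_p = \tfrac{1}{[p]!}F^p v$ satisfy $F v_{p-1} = [p] v_p$ essentially by definition (using $F\cdot F^{p-1}v = F^p v$ and $[p]!/[p-1]! = [p]$), and the weight statements for $K$, $g$, $h$ follow from Lemma 3.1 together with the fact that $g$ and $h$ are central-ish: from relation (1.3) we get $gF = Fg$ and $hF = Fh$, so $g v_p = \alpha v_p$ and $h v_p = \beta v_p$ for all $p$, while $KF = q^{-2}FK$ from (1.3) gives $K v_p = q^{-2p}\lambda v_p$ by an immediate induction. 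The only substantive point is formula (3.2) for $E v_p$.

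For (3.2) I would argue by induction on $p$. The base case $p=0$ is $Ev = 0$, consistent with the formula read at $p=0$ (the coefficient is multiplied against $v_{-1}$, interpreted as $0$). For the inductive step, write $[p] E v_p = E F v_{p-1} = (FE + (EF-FE)) v_{p-1}$ and use the defining relation (1.4), namely $EF - FE = \dfrac{K - K^{-1}g^2}{q-q^{-1}}$. Applying this operator to the weight vector $v_{p-1}$, which has $K$-eigenvalue $q^{-2(p-1)}\lambda$ and $g$-eigenvalue $\alpha$, yields
\[
(EF-FE)v_{p-1} = \frac{q^{-2(p-1)}\lambda - q^{2(p-1)}\lambda^{-1}\alpha^2}{q-q^{-1}}\, v_{p-1}.
\]
Combining this with the inductive hypothesis $E v_{p-1} = \dfrac{q^{-(p-2)}\lambda - q^{p-2}\lambda^{-1}\alpha^2}{q-q^{-1}} v_{p-2}$ and $F v_{p-2} = [p-1] v_{p-1}$, one gets $[p] E v_p$ as an explicit scalar times $v_{p-1}$; dividing by $[p]$ and simplifying the $q$-integer identity (the same telescoping that occurs in the $U_q(\mathfrak{sl}(2))$ case, now with the extra factor $\alpha^2$ attached to the $\lambda^{-1}$ term) produces the claimed coefficient $\dfrac{q^{-(p-1)}\lambda - q^{p-1}\lambda^{-1}\alpha^2}{q-q^{-1}}$.

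The main obstacle, such as it is, is purely bookkeeping: one must verify the $q$-integer simplification
\[
[p-1]\cdot\frac{q^{-(p-2)}\lambda - q^{p-2}\lambda^{-1}\alpha^2}{q-q^{-1}} + \frac{q^{-2(p-1)}\lambda - q^{2(p-1)}\lambda^{-1}\alpha^2}{q-q^{-1}} = [p]\cdot\frac{q^{-(p-1)}\lambda - q^{p-1}\lambda^{-1}\alpha^2}{q-q^{-1}},
\]
which reduces to the standard identity $q^{-(p-2)}[p-1] + q^{-(p-1)}q^{-1}\cdot(\text{matching})$, i.e. to $q^{-1}[p-1] + q^{-(p-1)} = [p]$ and its mirror $q[p-1] + q^{p-1} = [p]$ after separating the $\lambda$ and $\lambda^{-1}\alpha^2$ coefficients. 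These are exactly the relations used in the classical proof, so since $q$ is not a root of unity and $[p]\neq 0$, the division is legitimate and the induction closes. No genuinely new difficulty arises beyond tracking the group-like eigenvalues $\alpha,\beta$, which never interfere because $g,h$ commute with $E$ and $F$.
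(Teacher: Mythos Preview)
Your proposal is correct and follows exactly the approach the paper indicates: the paper gives no proof beyond the line ``Similar to the proof of [6, Lemma VI.3.4]'', and what you have written is precisely that argument, with the extra group-like eigenvalues $\alpha,\beta$ carried along. One small bookkeeping slip: the two $q$-integer identities you quote at the end have their exponents swapped; the correct pair is $q[p-1]+q^{-(p-1)}=[p]$ and $q^{-1}[p-1]+q^{p-1}=[p]$, which are exactly what your displayed equation reduces to after multiplying through by $q^{\pm(p-1)}$.
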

\begin{theorem} (a) Let $V$ be a finite-dimensional $U_{g,h}$-module generated by a highest weight
vector $v$ of weight $(\lambda,\alpha,\beta)$. Then

(i) $\lambda=\varepsilon {\alpha}q^n$, where $\varepsilon=\pm 1$ and
$n$ is the integer defined by $dimV=n+1$.

(ii) Setting $v_p=\frac 1 {[p]!}F^pv$, we have $v_p=0$ for $p>n$ and
in addition the set $\{v=v_0,v_1,\cdots, v_n\}$ is a basis of $V$.

(iii) The operator $K$ acting on $V$ is diagonalizable with $(n+1)$
distinct eigenvalues $$\{\varepsilon {\alpha}q^n,\varepsilon
{\alpha}q^{n-2},\cdots,\varepsilon
{\alpha}q^{-n+2},\varepsilon{\alpha} q^{-n}\},$$ and the operators
$g,h$ act on $V$ by scalars $\alpha,\beta$ respectively.

(iv) Any other highest weight vector in $V$ is a scalar multiple of
$v$ and is of weight $(\lambda,\alpha,\beta)$.

(v) The module is simple.

(b) Any simple finite-dimensional $U_{g,h}$-module is generated by a
highest weight vector. Two finite-dimensional $U_{g,h}$-modules
generated by highest weight vectors of the same weight are
isomorphic.
\end{theorem}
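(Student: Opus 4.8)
The plan is to follow the classical argument for $U_q(\mathfrak{sl}(2))$ (as in [6, Theorem VI.3.5]), carrying along the two extra group-like generators $g,h$; by Lemma 3.4 these act on any standard cyclic module by the fixed scalars $\alpha,\beta$, so they contribute nothing essentially new.

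For part (a), I start from the highest weight vector $v=v_0$ and the vectors $v_p=\frac{1}{[p]!}F^pv$. By Lemma 3.4 each nonzero $v_p$ is a $K$-eigenvector with eigenvalue $q^{-2p}\lambda$; since $q$ is not a root of unity these eigenvalues are pairwise distinct, so the nonzero $v_p$ are linearly independent, and finite-dimensionality of $V$ forces a largest integer $n\geq 0$ with $v_n\neq 0$ and $v_{n+1}=0$. Applying $E$ to $v_{n+1}=0$ and using the formula for $Ev_p$ in Lemma 3.4 gives $q^{-n}\lambda=q^{n}\lambda^{-1}\alpha^2$, hence $\lambda^2=q^{2n}\alpha^2$ and $\lambda=\varepsilon\alpha q^n$ for a sign $\varepsilon$. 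Since the $\mathbbm{K}$-span $W$ of $v_0,\dots,v_n$ is stable under $E,F,K^{\pm1},g^{\pm1},h^{\pm1}$ by Lemma 3.4 and contains the generator $v$, we get $W=V$, and the $v_p$ ($0\leq p\leq n$) form a basis; this gives $\dim V=n+1$, completing (i) and (ii). Part (iii) is then immediate: $Kv_p=\varepsilon\alpha q^{n-2p}v_p$ yields the $n+1$ distinct eigenvalues, and $g,h$ act by $\alpha,\beta$.

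For (iv), if $w$ is any highest weight vector then, $g$ and $h$ acting as the scalars $\alpha,\beta$ on all of $V$, its weight is $(\lambda',\alpha,\beta)$; expanding $w$ in the basis $\{v_p\}$ and using that $K$ has distinct eigenvalues forces $w=c\,v_p$ for a single $p$, and then $Ew=0$ together with the fact that the scalar $\frac{q^{-(p-1)}\lambda-q^{p-1}\lambda^{-1}\alpha^2}{q-q^{-1}}$ is nonzero for $1\leq p\leq n$ (equivalently $q^{2n}\neq q^{2(p-1)}$, true since $q$ is not a root of unity) forces $p=0$, so $w\in\mathbbm{K}v$ and has weight $(\lambda,\alpha,\beta)$. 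For (v): any nonzero submodule is finite-dimensional, hence by Proposition 3.3 contains a highest weight vector, which by (iv) is a nonzero scalar multiple of $v$; so the submodule contains $v$ and equals $V$. For (b), Proposition 3.3 gives a highest weight vector in a simple finite-dimensional module, and it generates the whole module by simplicity; and if $V,V'$ are standard cyclic of the same weight $(\lambda,\alpha,\beta)$, then the pair $(\varepsilon,n)$ from (i) is determined by $(\lambda,\alpha)$ (from $\varepsilon\alpha q^n=\varepsilon'\alpha q^{n'}$ one gets $q^{2(n-n')}=1$, forcing $n=n'$ and then $\varepsilon=\varepsilon'$), so $\dim V=\dim V'$, and the linear bijection $v_p\mapsto v_p'$ intertwines the actions of all generators since by Lemma 3.4 these depend only on $\lambda,\alpha,\beta,p$.

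The step needing the most care is the bookkeeping of the scalar $\frac{q^{-(p-1)}\lambda-q^{p-1}\lambda^{-1}\alpha^2}{q-q^{-1}}$ from Lemma 3.4: one must verify it vanishes exactly at $p=n+1$ and is nonzero for $1\leq p\leq n$, and every such (non)vanishing assertion reduces, after substituting $\lambda=\varepsilon\alpha q^n$, to the hypothesis that $q$ is not a root of unity. The only genuine deviation from the $U_q(\mathfrak{sl}(2))$ computation is the appearance of $\alpha$ (rather than a bare power of $q$) in $\lambda$, and this is harmless precisely because $g$ commutes with all the generators and hence acts by a single scalar on a cyclic module.
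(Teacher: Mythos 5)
Your proof is correct, and it is exactly the approach the paper itself takes: the paper simply cites [6, Theorem VI.3.5] (and [13, Theorem 3.4]) and omits the details, whereas you have written out that classical $U_q(\mathfrak{sl}(2))$ argument explicitly, carrying the scalars $\alpha,\beta$ through via Lemma 3.4. Your bookkeeping of the coefficient $\frac{q^{-(p-1)}\lambda-q^{p-1}\lambda^{-1}\alpha^2}{q-q^{-1}}=\varepsilon\alpha[n-p+1]$ (after substituting $\lambda=\varepsilon\alpha q^n$) is right, and all the nonvanishing claims correctly reduce to $q$ not being a root of unity.
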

\begin{proof} The proof follows that of [6, Theorem VI.3.5] or [13, Theorem
3.4]. It is omitted here.
\end{proof}

Let us denote the $(n+1)$-dimensional simple $U_{g,h}$-module
generated by a highest weight vector $v$ of weight
$(\varepsilon\alpha q^n,\alpha,\beta)$ in Theorem 3.5 by
$V_{\varepsilon,n,\alpha,\beta}$. Since $\mathbbm{K}$ is an
algebraically closed field, the dimension of a simple module over
$\mathbbm{K}[g^{\pm1},h^{\pm1}]$ is equal to one. Any such simple
$\mathbbm{K}[g^{\pm1},h^{\pm1}]$-module is determined by $g\cdot
1=\alpha,  h\cdot 1=\beta,$ for $\alpha,\beta\in \mathbbm{K}^{\times
}$.  This simple module is denoted by
$\mathbbm{K}_{\alpha,\beta}:=\mathbb{K}\cdot1$ in the sequel. The
finite-dimensional simple $U_q(\mathfrak{sl}(2))$-modules are
characterized in [6, Theorem VI.3.5]. These simple modules are
denoted by $V_{\varepsilon,n}$, where $\varepsilon=\pm 1$, and $n\in
\mathbb{Z}_{\geq 0}$. By Proposition 2.2 and [8, Proposition 16.1],
every finite-dimensional simple $U_{g,h}$-module is isomorphic to
$\mathbbm{K}_{\alpha,\beta}\otimes V_{\varepsilon,n}$. It is not
difficult to verify that $\mathbbm{K}_{\alpha,\beta}\otimes
V_{\varepsilon,n}$ is isomorphic to
$V_{\varepsilon,n,\alpha,\beta}$.

\begin{corollary}[Clebsch-Gordan
Formula] Let $n\geq m$ be two non-negative integers. There exists an
isomorphism of $U_{g,h}$-modules
$$V_{\varepsilon,n,\alpha,\beta}\otimes V_{\varepsilon',m,\alpha',\beta'}
\cong
V_{\varepsilon\varepsilon',n+m,\alpha\alpha',\beta\beta'}\oplus
V_{\varepsilon\varepsilon',n+m-2,\alpha\alpha',\beta\beta'}\oplus\cdots\oplus
V_{\varepsilon\varepsilon',n-m,\alpha\alpha',\beta\beta'}.$$
\end{corollary}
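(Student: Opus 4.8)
The plan is to reduce the statement to the classical Clebsch–Gordan decomposition for $U_q(\mathfrak{sl}(2))$ via the algebra isomorphism $U_{g,h}\cong \mathbbm{K}[g^{\pm1},h^{\pm1}]\otimes U_q(\mathfrak{sl}(2))$ of Proposition 2.2, together with the identification $V_{\varepsilon,n,\alpha,\beta}\cong \mathbbm{K}_{\alpha,\beta}\otimes V_{\varepsilon,n}$ noted just before the statement. First I would record the behaviour of the group-like elements $g,h$ on a tensor product of $U_{g,h}$-modules: since $\Delta(g)=g\otimes g$ and $\Delta(h)=h\otimes h$ by (1.8), the element $g$ acts on $V_{\varepsilon,n,\alpha,\beta}\otimes V_{\varepsilon',m,\alpha',\beta'}$ as the scalar $\alpha\alpha'$ and $h$ acts as the scalar $\beta\beta'$. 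Hence the whole tensor product lies in the ``block'' on which $g=\alpha\alpha'$, $h=\beta\beta'$, and as a module over $\mathbbm{K}[g^{\pm1},h^{\pm1}]$ it is just a direct sum of copies of $\mathbbm{K}_{\alpha\alpha',\beta\beta'}$; all the interesting structure is in the $U_q(\mathfrak{sl}(2))$-action.

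Next I would analyse the $E,F,K$-action. The subtlety is that the coproduct of $U_{g,h}$ is \emph{not} the tensor-product coproduct coming from the factorisation $U_{g,h}\cong\mathbbm{K}[g^{\pm1},h^{\pm1}]\otimes U_q(\mathfrak{sl}(2))$ (this is exactly the content of Remark 2.3(2) and of formulas (1.5), (1.6)). So I cannot simply invoke the classical decomposition on the second tensor factor directly. Instead I would work with the highest weight vectors. Let $v$ (resp.\ $w$) be the highest weight vector of $V_{\varepsilon,n,\alpha,\beta}$ (resp.\ $V_{\varepsilon',m,\alpha',\beta'}$), with weights $(\varepsilon\alpha q^n,\alpha,\beta)$ and $(\varepsilon'\alpha' q^m,\alpha',\beta')$. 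Using $\Delta(E)=h^{-1}\otimes E+E\otimes hK$ one finds $E(v\otimes w)=\beta^{-1}v\otimes Ew+Ev\otimes \beta (\varepsilon'\alpha'q^m) w=0$, so $v\otimes w$ is a highest weight vector of weight $(\varepsilon\varepsilon'\alpha\alpha' q^{n+m},\alpha\alpha',\beta\beta')$, and $K$ acts on it by $\varepsilon\varepsilon'\alpha\alpha'q^{n+m}$. By Theorem 3.5, the submodule it generates is $V_{\varepsilon\varepsilon',n+m,\alpha\alpha',\beta\beta'}$. Now I would proceed as in the classical $\mathfrak{sl}(2)$ argument (cf.\ [6, Theorem VI.5.5] or [13]): identify the $K$-eigenspace decomposition of the tensor product — the eigenvalue $\varepsilon\varepsilon'\alpha\alpha'q^{n+m-2j}$ has multiplicity $\min(j,n,m,n+m-j)+1$ — and inductively peel off highest weight vectors. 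Concretely, one shows by downward induction on $j=0,1,\dots,m$ that the $E$-kernel inside the $K$-eigenspace with eigenvalue $\varepsilon\varepsilon'\alpha\alpha'q^{n+m-2j}$ is one-dimensional modulo the sum $V_{\varepsilon\varepsilon',n+m,\alpha\alpha',\beta\beta'}\oplus\cdots\oplus V_{\varepsilon\varepsilon',n+m-2(j-1),\alpha\alpha',\beta\beta'}$ already constructed, producing a new highest weight vector of weight $(\varepsilon\varepsilon'\alpha\alpha'q^{n+m-2j},\alpha\alpha',\beta\beta')$, whose generated submodule is $V_{\varepsilon\varepsilon',n+m-2j,\alpha\alpha',\beta\beta'}$ by Theorem 3.5(v). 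A dimension count, $\sum_{j=0}^{m}(n+m-2j+1)=(n+1)(m+1)$, shows these submodules exhaust the tensor product and the sum is direct.

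Alternatively — and this is the cleaner route I would actually write up — one observes that the coproducts $\Delta(E')$, $\Delta(F)$, $\Delta(K')$ for the shifted generators $E'=g^{-1}E$, $K'=g^{-1}K$ computed in Remark 2.3(2) differ from the standard $U_q(\mathfrak{sl}(2))$ coproduct only by tensor factors of the central group-like elements $g,h$, which act by fixed scalars on each of our modules. Pulling back along $\varphi$ of Proposition 2.2, the $U_q(\mathfrak{sl}(2))$-module structure on $V_{\varepsilon,n,\alpha,\beta}\otimes V_{\varepsilon',m,\alpha',\beta'}$ obtained from the genuine $U_{g,h}$-coproduct is \emph{isomorphic}, via rescaling the weight vectors by suitable scalars (absorbing the $\alpha^{\pm1},\beta^{\pm1}$ factors), to the ordinary tensor product $V_{\varepsilon,n}\otimes V_{\varepsilon',m}$ of $U_q(\mathfrak{sl}(2))$-modules. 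Then the classical Clebsch–Gordan formula [6, Theorem VI.5.5] gives the decomposition into $V_{\varepsilon\varepsilon',n+m}\oplus\cdots\oplus V_{\varepsilon\varepsilon',n-m}$, and tensoring back with the one-dimensional $\mathbbm{K}[g^{\pm1},h^{\pm1}]$-module $\mathbbm{K}_{\alpha\alpha',\beta\beta'}$ and using $V_{\varepsilon,k,\alpha\alpha',\beta\beta'}\cong\mathbbm{K}_{\alpha\alpha',\beta\beta'}\otimes V_{\varepsilon,k}$ yields the claim. The main obstacle — really the only thing requiring care — is to verify precisely that the twisting by the group-like scalars $\alpha,\beta$ in $\Delta(E')$ and $\Delta(F)$ can be undone by an isomorphism of $U_q(\mathfrak{sl}(2))$-modules; once the weight-vector rescaling is set up correctly (matching $Ev_p$ from Lemma 3.4 with the classical formula), everything else is the standard $\mathfrak{sl}(2)$ bookkeeping.
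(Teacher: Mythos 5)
Your ``cleaner route'' is precisely the argument the paper gives, except that the paper compresses it to a single unproved assertion: it states $V_{\varepsilon,n,\alpha,\beta}\otimes V_{\varepsilon',m,\alpha',\beta'}\cong \mathbbm{K}_{\alpha\alpha',\beta\beta'}\otimes (V_{\varepsilon,n}\otimes V_{\varepsilon',m})$ and then cites [6, Theorem VII.7.1] for the classical $U_q(\mathfrak{sl}(2))$ decomposition, with no comment on why the $U_{g,h}$-coproduct tensor product agrees, up to isomorphism, with the $U_q(\mathfrak{sl}(2))$-coproduct tensor product twisted by $\mathbbm{K}_{\alpha\alpha',\beta\beta'}$. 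You correctly flag this as the nontrivial step (Remark 2.3(2) shows the coproducts genuinely differ) and supply the missing content: the extra group-like factors $g,h$ appearing in $\Delta(E')$, $\Delta(F)$ act by fixed scalars on each factor, so a diagonal rescaling $v_i\otimes w_j\mapsto c_{ij}\,v_i\otimes w_j$ (with $c_{ij}$ built from suitable powers of $\alpha,\beta,\alpha',\beta'$) intertwines the two module structures. That is the right argument and it closes the gap the paper leaves open. Your first route --- finding the highest weight vector $v\otimes w$ of weight $(\varepsilon\varepsilon'\alpha\alpha'q^{n+m},\alpha\alpha',\beta\beta')$, peeling off submodules via Theorem 3.5, and closing with the dimension count $\sum_{j=0}^{m}(n+m-2j+1)=(n+1)(m+1)$ --- is a genuinely different, self-contained proof that never appeals to Proposition 2.2 or to the classical Clebsch--Gordan theorem; it is more elementary but longer. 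Both routes are sound; the second buys brevity by leaning on the known $U_q(\mathfrak{sl}(2))$ result, while the first makes the decomposition visible directly inside $U_{g,h}$. One small point worth tightening in the first route: after exhibiting $v\otimes w$ as a highest weight vector, to conclude that the submodule it generates is the full $V_{\varepsilon\varepsilon',n+m,\alpha\alpha',\beta\beta'}$ (rather than something smaller) you should note it is $(n+m+1)$-dimensional, e.g.\ because Theorem 3.5(a)(i) forces the dimension once the $K$-eigenvalue is $\varepsilon\varepsilon'\alpha\alpha'q^{n+m}$ on a highest weight vector of a finite-dimensional standard cyclic module, or by exhibiting the $n+m+1$ linearly independent $F$-strings explicitly.
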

\begin{proof} Since $V_{\varepsilon,n,\alpha,\beta}\otimes
V_{\varepsilon',m,\alpha',\beta'}\cong
\mathbbm{K}_{\alpha\alpha',\beta\beta'}\otimes
(V_{\varepsilon,n}\otimes V_{\varepsilon',m})$, and
$$V_{\varepsilon,n}\otimes V_{\varepsilon',m}
\cong V_{\varepsilon\varepsilon',n+m}\oplus
V_{\varepsilon\varepsilon'n+m-2}\oplus\cdots\oplus
V_{\varepsilon\varepsilon',n-m}$$ as modules over
$U_q(\mathfrak{sl}(2))$ by [6, Theorem VII.7.1],
$$V_{\varepsilon,n,\alpha,\beta}\otimes V_{\varepsilon',m,\alpha',\beta'}
\cong
V_{\varepsilon\varepsilon',n+m,\alpha\alpha',\beta\beta'}\oplus
V_{\varepsilon\varepsilon',n+m-2,\alpha\alpha',\beta\beta'}\oplus\cdots\oplus
V_{\varepsilon\varepsilon',n-m,\alpha\alpha',\beta\beta'}.$$ This
completes the proof.\end{proof}

\begin{lemma} Let $m\in \mathbbm{N}$. Then
$$[E,F^m]=[m]F^{m-1}\frac{q^{-(m-1)}K-q^{m-1}K^{-1}g^2}{q-q^{-1}}.$$
\end{lemma}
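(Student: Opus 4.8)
The plan is to induct on $m$. For the base case $m=1$ the claimed identity reads $[E,F]=\dfrac{K-K^{-1}g^2}{q-q^{-1}}$, which is exactly relation (1.4), since $[1]=1$ and the exponents $q^{-(m-1)}$, $q^{m-1}$ both equal $1$ when $m=1$.

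For the inductive step, assume the formula holds for some $m\geq 1$. I would start from
\[
[E,F^{m+1}]=[E,F^m]F+F^m[E,F],
\]
substitute the inductive hypothesis into the first summand and relation (1.4) into the second. Everything then reduces to understanding how the ``Cartan part'' $q^{-(m-1)}K-q^{m-1}K^{-1}g^2$ commutes past a single factor of $F$. Using $KF=q^{-2}FK$ and $K^{-1}F=q^{2}FK^{-1}$ (from relation (1.3)) together with $gF=Fg$, one obtains
\[
\bigl(q^{-(m-1)}K-q^{m-1}K^{-1}g^2\bigr)F=F\bigl(q^{-(m+1)}K-q^{m+1}K^{-1}g^2\bigr),
\]
so that $[E,F^m]F=[m]\,F^m\cdot\dfrac{q^{-(m+1)}K-q^{m+1}K^{-1}g^2}{q-q^{-1}}$. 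Combining this with $F^m[E,F]=F^m\cdot\dfrac{K-K^{-1}g^2}{q-q^{-1}}$, it remains to check the scalar identities $q^{-(m+1)}[m]+1=q^{-m}[m+1]$ and $q^{m+1}[m]+1=q^{m}[m+1]$, both of which follow at once from $[k]=\dfrac{q^k-q^{-k}}{q-q^{-1}}$. Matching the coefficients of $K$ and of $K^{-1}g^2$ then gives $[E,F^{m+1}]=[m+1]\,F^m\,\dfrac{q^{-m}K-q^{m}K^{-1}g^2}{q-q^{-1}}$, completing the induction.

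There is no genuine conceptual obstacle; the only point requiring care is the bookkeeping of powers of $q$ when moving $K^{\pm1}$ past powers of $F$. Phrasing the step as commuting the Cartan part past a single $F$ (rather than past $F^{m-1}$ all at once) keeps this transparent. Alternatively one could expand $[E,F^{m+1}]=F[E,F^m]+[E,F]F^m$ and run the identical argument, the only difference being on which side the $K^{\pm1}$'s are pushed through.
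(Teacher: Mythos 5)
Your induction is correct and self-contained, but the paper takes a shortcut: it quotes the standard identity $[E',F'^m]=[m]F'^{m-1}\frac{q^{-(m-1)}K'-q^{m-1}K'^{-1}}{q-q^{-1}}$ for the Chevalley generators of $U_q(\mathfrak{sl}(2))$ (Kassel, Lemma VI.1.3) and then transports it along the algebra isomorphism of Proposition 2.2 by substituting $E'=Eg^{-1}$, $K'=g^{-1}K$, $F'=F$, using that $g$ is central among $E,F,K$. That route is essentially a one-line deduction once the isomorphism is in hand, and it makes visible \emph{why} the $g^2$ appears (it is $(g^{-1}K)^{-1}=gK^{-1}$ squared relative to the $g^{-1}$ absorbed into $E$). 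Your direct induction reproves the underlying $\mathfrak{sl}(2)$ identity from scratch inside $U_{g,h}$; it is more elementary and doesn't rely on Proposition 2.2 or an external reference, at the cost of carrying the $q$-power bookkeeping yourself. Both are valid; the only thing worth noting is that your commutation step $\bigl(q^{-(m-1)}K-q^{m-1}K^{-1}g^2\bigr)F=F\bigl(q^{-(m+1)}K-q^{m+1}K^{-1}g^2\bigr)$ uses $KF=q^{-2}FK$ and $K^{-1}F=q^{2}FK^{-1}$ together with $gF=Fg$, and the two scalar identities $q^{\mp(m+1)}[m]+1=q^{\mp m}[m+1]$ are exactly what closes the induction — all of which you state and which check out.
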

\begin{proof}Let $E',F',K'$ be the Chevalley generators of
$U_{q}(\mathfrak{sl}(2))$. Then
$$[E',F'^m]=[m]F'^{m-1}\frac{q^{-(m-1)}K'-q^{m-1}K'^{-1}}{q-q^{-1}},$$
by [6, Lemma VI.1.3]. Subsituting $Eg^{-1}$, $g^{-1}K$, $F$ for
$E',K',F'$ in the above identity respectively, we obtain
$$[E,F^m]=[m]F^{m-1}\frac{q^{-(m-1)}K-q^{m-1}K^{-1}g^2}{q-q^{-1}}.$$\end{proof}

Let $M:=\mathbb{K}_{\alpha,\beta}=\mathbb{K}\cdot 1$ be a module
over $\mathbb{K}[g^{\pm1},h^{\pm1}]$, where $g\cdot 1=\alpha$ and
$h\cdot 1=\beta$. About the simple modules over the algebra
$\mathbb{K}[g^{\pm1},h^{\pm1}]$, we have the following
\begin{proposition} Given two simple $\mathbb{K}[g^{\pm1},h^{\pm1}]$-modules
$M:=\mathbb{K}_{\alpha,\beta}$ and $
M':=\mathbb{K}_{\alpha',\beta'},$ if $M$ is not isomorphic to $M'$,
then $\Ext^n(M',M)=0$ for all $n\geq 0$; if $M\cong M'$, then
$$\Ext^n(M',M)\cong\left\{\begin{array}{lr}\mathbb{K},&n=0\\
\mathbb{K}^2,&n=1\\0,& n\geq 2.\end{array}\right.$$
\end{proposition}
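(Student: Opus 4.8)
Set $R=\mathbb{K}[g^{\pm1},h^{\pm1}]$, so that $M=\mathbb{K}_{\alpha,\beta}\cong R/(g-\alpha,h-\beta)$ and $M'=\mathbb{K}_{\alpha',\beta'}\cong R/(g-\alpha',h-\beta')$. Two elementary observations organise the proof. First, for every $n$ the module $\Ext^{n}_{R}(M',M)$ is annihilated by $\operatorname{Ann}_{R}(M')=(g-\alpha',h-\beta')$ and by $\operatorname{Ann}_{R}(M)=(g-\alpha,h-\beta)$, since multiplication by an element of either ideal induces the zero map on the respective module and hence on $\Ext$. When $M\not\cong M'$ these two maximal ideals are comaximal (their difference contains a nonzero scalar), so $1$ lies in their sum and $\Ext^{n}_{R}(M',M)=0$ for all $n\geq 0$; this settles the non-isomorphic case, and from now on I assume $M\cong M'$, i.e.\ $\alpha=\alpha'$ and $\beta=\beta'$. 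Second, $R=A\otimes_{\mathbb{K}}B$ with $A=\mathbb{K}[g^{\pm1}]$, $B=\mathbb{K}[h^{\pm1}]$, and $\mathbb{K}_{\alpha,\beta}=A_{\alpha}\otimes_{\mathbb{K}}B_{\beta}$ is the outer tensor product of the one-dimensional simple modules $A_{\alpha}=A/(g-\alpha)$ and $B_{\beta}=B/(h-\beta)$.

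For the remaining case the plan is to work one variable at a time and reassemble with a Künneth formula. Each of $A,B$ is a Laurent polynomial ring in one variable over a field, hence a principal ideal domain with $\gldim=1$; from the free resolution $0\to A\xrightarrow{\,g-\alpha\,}A\to A_{\alpha}\to 0$, applying $\Hom_{A}(-,A_{\alpha})$ gives the complex $\mathbb{K}\xrightarrow{\,0\,}\mathbb{K}$, so $\Hom_{A}(A_{\alpha},A_{\alpha})\cong\Ext^{1}_{A}(A_{\alpha},A_{\alpha})\cong\mathbb{K}$ and $\Ext^{n}_{A}(A_{\alpha},A_{\alpha})=0$ for $n\geq 2$, and likewise over $B$. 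Tensoring the two one-variable free resolutions over $\mathbb{K}$ produces a free resolution of $\mathbb{K}_{\alpha,\beta}$ over $R$ — it is precisely the Koszul complex on the sequence $g-\alpha,\,h-\beta$, which is $R$-regular because $R$ is a domain by Theorem 2.1 and $R/(g-\alpha)\cong\mathbb{K}[h^{\pm1}]$ is again a domain. Applying $\Hom_{R}(-,\mathbb{K}_{\alpha,\beta})$ and using $\Hom_{R}(P\otimes_{\mathbb{K}}Q,\,M\otimes_{\mathbb{K}}N)\cong\Hom_{A}(P,M)\otimes_{\mathbb{K}}\Hom_{B}(Q,N)$ for finitely generated free modules identifies the resulting cochain complex with the $\mathbb{K}$-tensor product of the two one-variable $\Hom$-complexes; since $\mathbb{K}$ is a field there is no $\operatorname{Tor}$ correction, and one obtains
$$\Ext^{n}_{R}(\mathbb{K}_{\alpha,\beta},\mathbb{K}_{\alpha,\beta})\;\cong\;\bigoplus_{p+q=n}\Ext^{p}_{A}(A_{\alpha},A_{\alpha})\otimes_{\mathbb{K}}\Ext^{q}_{B}(B_{\beta},B_{\beta}).$$

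Substituting the one-variable values $\mathbb{K}$ (in degrees $0$ and $1$) and $0$ (in degrees $\geq2$) into this formula yields the $\Ext$-table displayed in the statement: $\mathbb{K}$ in degree $0$, $\mathbb{K}^{2}$ in degree $1$, and vanishing thereafter (vanishing for $n\geq3$ being in any case automatic from $\gldim R=2$). I expect the only real obstacle to be the Künneth step itself — verifying that the $R$-free resolution of $\mathbb{K}_{\alpha,\beta}$ is the $\mathbb{K}$-tensor product of the one-variable resolutions and that $\Hom_{R}$ commutes with that tensor decomposition on free modules, so that the displayed formula holds; once this is in place, the one-variable computations, the comaximality argument for the non-isomorphic case, and the final substitution are all routine.
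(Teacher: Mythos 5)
Your argument for the non-isomorphic case (both annihilators $(g-\alpha,h-\beta)$ and $(g-\alpha',h-\beta')$ kill $\Ext^n_R(M',M)$, and distinct maximal ideals are comaximal) is correct and is cleaner than the paper's route, which instead checks directly that the dualized Koszul complex becomes exact when $(\alpha,\beta)\neq(\alpha',\beta')$. Your Künneth setup for the case $M\cong M'$ is also sound: tensoring the two one-variable resolutions does produce the Koszul resolution $0\to R\to R^2\to R\to\mathbb{K}_{\alpha,\beta}\to0$ on the regular sequence $g-\alpha,h-\beta$ (this is exactly the resolution the paper writes down), the differentials of the dualized complex vanish, and the displayed K\"unneth formula for $\Ext^n_R$ is valid.

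The problem is your final substitution. For $n=2$ the sum $\bigoplus_{p+q=2}\Ext^p_A(A_\alpha,A_\alpha)\otimes\Ext^q_B(B_\beta,B_\beta)$ contains the term $p=q=1$, which contributes $\mathbb{K}\otimes_{\mathbb{K}}\mathbb{K}\cong\mathbb{K}$, not $0$. So your own formula gives
$$\Ext^2_R(\mathbb{K}_{\alpha,\beta},\mathbb{K}_{\alpha,\beta})\cong\mathbb{K},$$
and vanishing only begins at $n\geq3$; equivalently, $\Ext^n$ has dimension $\binom{2}{n}$, as it must for the residue field of a two-dimensional regular ring. This is not a defect of your method but of the statement you were asked to prove: the paper's own proof explicitly computes $\Ext^2(\mathbb{K}_{\alpha,\beta},\mathbb{K}_{\alpha,\beta})\cong\mathbb{K}$ from the same Koszul resolution, contradicting the "$0$ for $n\geq2$" in the displayed proposition. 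You should state $\Ext^2\cong\mathbb{K}$ and $\Ext^n=0$ only for $n\geq3$, rather than forcing the Künneth output to match the (erroneous) claim in degree $2$.
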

\begin{proof} Denote the algebra $\mathbb{K}[g^{\pm1},h^{\pm1}]$ by $R$. Construct a
projective resolution of the simple $R$-module
$\mathbb{K}_{\alpha,\beta}$ as follows:
\begin{align}\xymatrix@C=0.5cm{
  0 \ar[r] & R \ar[r]^{\varphi_2} &R^2 \ar[r]^{\varphi_1} & R \ar[r]^{\varphi_0} & \mathbb{K}_{\alpha,\beta}
   \ar[r] & 0, }\end{align}
where $$\varphi_0(r(g,h))=r(\alpha,\beta),\qquad
   \varphi_1(r(g,h),s(g,h))=r(g,h)(g-\alpha)+s(g,h)(h-\beta),$$ and
   $$\varphi_2(r(g,h))=(r(g,h)(h-\beta),-r(g,h)(g-\alpha))$$ for
   $r(g,h),s(g,h)\in R$. Applying the functor
   $\Hom_R(-,$ $\mathbb{K}_{\alpha,\beta})$ to the exact sequence
   (3.2), we obtain the following complex:
\begin{align}\xymatrix@C=0.5cm{
  0 \ar[r] & \mathbb{K} \ar[r]^{\varphi_1^*} & \mathbb{K}^2 \ar[r]^{\varphi_2^*} & \mathbb{K}
   \ar[r] & 0. }\end{align}
For any $\theta\in \Hom_R(R,\mathbb{K}_{\alpha,\beta})$,
$$\varphi^*_1(\theta)((1,0))=\theta(g-\alpha)=(g-\alpha)\theta(1)=0,$$
$$\varphi^*_1(\theta)((0,1))
=\theta(h-\beta)=(h-\beta)\theta(1)=0.$$ This means that
$\varphi^*_1=0$. Similarly, one can prove that $\varphi_2^*=0$. So
$$\Ext^0(\mathbb{K}_{\alpha,\beta}, \mathbb{K}_{\alpha,\beta} )\cong
\mathbb{K},\qquad \Ext^1(\mathbb{K}_{\alpha,\beta},
\mathbb{K}_{\alpha,\beta} )\cong \mathbb{K}^2,$$
$$\Ext^2(\mathbb{K}_{\alpha,\beta}, \mathbb{K}_{\alpha,\beta} )\cong \mathbb{K},\qquad\Ext^n(\mathbb{K}_{\alpha,\beta},
\mathbb{K}_{\alpha,\beta})=0$$ for $n\geq 3$.

If we use the functor $\Hom_R(-,\mathbb{K}_{\alpha',\beta'})$ to
replace the functor $\Hom_R(-,\mathbb{K}_{\alpha,\beta})$ in the
above proof, we can also obtain the complex (3.3). In this case, we
have
$$\varphi_1^*(\theta)((1,0))=\alpha'-\alpha,\qquad
\varphi_1^*(\theta)((0,1))=\beta'-\beta,$$ and
$$\varphi_2^*(\eta)(a)=(\beta'-\beta)a\eta((1,0))-(\alpha'-\alpha)a\eta((0,1))$$
for $\theta\in \Hom_R(R,\mathbb{K}_{\alpha',\beta'})$,  $\eta\in
\Hom_R(R^2,\mathbb{K}_{\alpha',\beta'})$, and $a\in R$. Hence both
$\varphi_1^*$ and $\varphi_2^*$ are not zero linear mappings
provided that either $\alpha\neq \alpha'$, or $\beta\neq \beta'$.
Consequently, the sequence (3.3) is exact in this case. So $
\Ext^n(\mathbb{K}_{\alpha,\beta}, \mathbb{K}_{\alpha',\beta'})=0$
for $n\geq 0$.
\end{proof}
It is well-known that the group $\Ext^1(M', M)$ can be described by
short exact sequences. Next, we  describe
$\Ext^1(\mathbb{K}_{\alpha,\beta}, \mathbb{K}_{\alpha,\beta})$ by
short exact sequences.

 Let $\xymatrix@C=0.5cm{
  0 \ar[r] & \mathbb{K}_{\alpha,\beta} \ar[r]^{\varphi} & N \ar[r]^{\psi} & \mathbb{K}_{\alpha,\beta}
   \ar[r] & 0}$ be an element in $\Ext^1(\mathbb{K}_{\alpha,\beta},\mathbb{K}_{\alpha,\beta})$. Suppose
$\{w_1,w_2\}$ be a basis of $N$ such that $\psi(w_2)=1$ and
   $w_1=\varphi(1)$. Then $gw_1=\alpha w_1$,
   $hw_1=\beta w_1$. Suppose $gw_2=aw_2+xw_1$. Then $$\alpha\psi(w_2)=\psi(gw_2)=a\psi(w_2).$$ So $gw_2=\alpha
   w_2+xw_1$. Similarly, we can prove $hw_2=\beta w_2+yw_1$.
If $\{u_1,u_2\}$ is another basis satisfying $u_1=\varphi(1)=w_1$
and $\psi(u_2)=1$, then $u_2-w_2=\lambda w_1$ for some $\lambda\in
\mathbb{K}$. Thus $gu_2=\alpha w_2+xw_1+\lambda\alpha w_1=\alpha
u_2+xu_1$. Similarly, we obtain that $hu_2=\beta u_2+yu_1$. Hence
$x,y$ are independent of the choice of the bases of $N$.  So we can
use $M_{x,y}$ to denote the module $N$. In the following, we abuse
notation and use $M_{x,y}$ to denote the following exact sequence
$\xymatrix@C=0.5cm{
  0 \ar[r] & \mathbb{K}_{\alpha,\beta} \ar[r]^{\varphi} & M_{x,y} \ar[r]^{\psi} &\mathbb{K}_{\alpha,\beta}
   \ar[r] & 0}$ meanwhile.

Let $\xymatrix@C=0.5cm{
  0 \ar[r] & \mathbb{K}_{\alpha,\beta} \ar[r]^{\varphi'} & M_{x',y'} \ar[r]^{\psi'} & \mathbb{K}_{\alpha,\beta}
   \ar[r] & 0}$ be another element in $\Ext^1(\mathbb{K}_{\alpha,\beta},\mathbb{K}_{\alpha,\beta})$, and $\{w_1',w_2'\}$ be a basis of $M_{x',y'}$ such that $w_1'=\varphi'(1),$
   $\psi'(w_2')=1$ and
$$gw_1'=\alpha w_1', \qquad gw_2'=\alpha w_2'+x'w_1',$$
$$hw_1'=\beta w_1', \qquad hw_2'=\beta w_2'+y'w_1'.$$
Consider the
   following  commutative diagram
$$\xymatrix{
  0\ar[r]^{ } & \mathbb{K}_{\alpha,\beta} \ar[d]_{\mu_1} \ar[r]^{\varphi } & N \ar[d]_{\mu_2}
  \ar[r]^{\psi } &\mathbb{K}_{\alpha,\beta}\ar[d]_{\mu_3} \ar[r]^{} &0  \\
  0 \ar[r]^{ } &\mathbb{K}_{\alpha,\beta}\ar[r]^{\varphi' } & N' \ar[r]^{\psi'} & \mathbb{K}_{\alpha,\beta} \ar[r]^{} & 0,
  }$$
where $\mu_i$ are isomorphisms. Then
$\mu_2(w_1)=\mu_2(\varphi(1))=\varphi'\mu_1(1)=\mu_1(1)w_1'$.
Suppose $\mu_2(w_2)=aw_1'+bw_2'$. Then
$g\mu_2(w_2)=(a\alpha+bx')w_1'+b\alpha w_2' ,$ and $$\mu_2(gw_2)\\
=\mu_2(\alpha w_2+xw_1)=(a\alpha+x\mu_1(1))w_1'+b\alpha w_2'.$$
Since $g\mu_2(w_2)=\mu_2(gw_2),$  $bx'=x\mu_1(1)$. Similarly, we
have $by'=y\mu_1(1)$. Moreover,
$$\mu_3(1)=\mu_3(\psi(w_2))=\psi'(\mu_2(w_2))=b.$$
If $\mu_1(1)=\mu_3(1)=1$, then $b=1$ and $(x,y)=(x',y')$. Thus
$M_{x,y}= M_{x',y'}$ as elements in the group
$\Ext^1(\mathbb{K}_{\alpha,\beta},\mathbb{K}_{\alpha,\beta})$  if
and only if $(x,y)=(x',y')$.

From Proposition 3.8, we know that
$\Ext^1(\mathbb{K}_{\alpha,\beta},\mathbb{K}_{\alpha,\beta})$ is a
vector space over $\mathbb{K}$. To describe the operations of the
vector space
$\Ext^1(\mathbb{K}_{\alpha,\beta},\mathbb{K}_{\alpha,\beta})$ in the
terms of exact sequences, we use $I$ to denote the ideal of
$R=\mathbb{K}[g^{\pm1},h^{\pm1}]$ generated by $g-\alpha$ and
$h-\beta$, i.e., $I=R(g-\alpha)+R(h-\beta)$. Let $\xi$ be the
embedding homomorphism, and $f$ be the epimorphism of $R$-modules
from $R$ to $\mathbb{K}_{\alpha,\beta}$,  given by
$$f(a(g,h))=a(\alpha,\beta),\quad
a(g,h)\in R.$$ Then we have the following exact sequence of
$R$-modules:
\begin{eqnarray}\xymatrix@C=0.5cm{
  0 \ar[r] & I \ar[r]^{\xi} & R \ar[r]^{f} &
  \mathbb{K}_{\alpha,\beta}
   \ar[r] & 0}.\end{eqnarray}Applying the functor $\Hom_R(-,\mathbb{K}_{\alpha,\beta})$ to the exact sequence
   (3.4) yields the exact sequence
\begin{eqnarray}\xymatrix@C=0.5cm{
  \Hom_R(R,\mathbb{K}_{\alpha,\beta}) \ar[r]^{\tau} & \Hom_R(I,\mathbb{K}_{\alpha,\beta})  \ar[r]^{\partial}&
 \Ext^1( \mathbb{K}_{\alpha,\beta},\mathbb{K}_{\alpha,\beta})
   \ar[r] & 0}.\end{eqnarray}
For any exact sequence of $R$-modules $\xymatrix@C=0.3cm{
  0 \ar[r] & \mathbb{K}_{\alpha,\beta} \ar[r]^{\varphi} & M_{x,y} \ar[r]^{\psi} & \mathbb{K}_{\alpha,\beta}
   \ar[r] & 0,}$ and a basis $\{w_1,w_2\}$ of $M_{x,y}$ satisfying
$\psi(w_2)=1$, $w_1=\varphi(1)$,   define a homomorphism of
$R$-modules $$\sigma: R\rightarrow M_{x,y},\qquad
   \sigma(1)=w_2.$$  Let $\eta_{x,y}$ be a homomorphism of
   $R$-modules
from $I$ to $\mathbb{K}_{\alpha,\beta}$, where
\begin{eqnarray}\eta_{x,y}(a(g,h)(g-\alpha)+b(g,h)(h-\beta))=xa(\alpha,\beta)+yb(\alpha,\beta),\end{eqnarray}
for $a(g,h),b(g,h)\in R$. Now, we have the following commutative
diagram:
$$\xymatrix{
  0\ar[r]^{ } & I \ar[d]_{\eta_{x,y}} \ar[r]^{\xi } & R \ar[d]_{\sigma}
  \ar[r]^{f } & \mathbb{K}_{\alpha,\beta} \ar[d]_{\id} \ar[r]^{} &0  \\
  0 \ar[r]^{ } &\mathbb{K}_{\alpha,\beta}\ar[r]^{\varphi } & M_{x,y} \ar[r]^{\psi} & \mathbb{K}_{\alpha,\beta} \ar[r]^{} &
  0.
  }$$
It is easy to check that $M_{x,y}$ is the pushout of $\eta_{x,y}$
and $\xi$. If we use $M_{kx,ky}$ for any $k\in \mathbb{K}$ to
replace $M_{x,y}$,  we get a homomorphism $\eta_{kx,ky}$ from $I$ to
$\mathbb{K}_{\alpha,\beta}$. Similarly, we have a homomorphism
$\eta_{x+x',y+y'}$ from $I$ to $\mathbb{K}_{\alpha,\beta}$ by using
$M_{x+x',y+y'}$ to replace $M_{x,y}$. From the definition of
$\eta_{x,y}$, one obtains the following:
\begin{eqnarray}\eta_{kx,ky}=k\eta_{x,y},\qquad
\eta_{x+x',y+y'}=\eta_{x,y}+\eta_{x',y'}.\end{eqnarray} Define
$$M_{x,y}\boxplus M_{x',y'}=M_{x+x',y+y'},\quad k\boxdot
M_{x,y}=M_{kx,ky},$$ for $k\in \mathbb{K}$. Then $\{M_{x,y}|x,y\in
\mathbb{K}\}$ becomes a vector space over $\mathbb{K}.$
 By
[12, Theorem 3.4.3], we have a bijection $\Psi_1 $ from
$\{M_{x,y}|x,y\in \mathbb{K}\}$ to
$\Ext^1(\mathbb{K}_{\alpha,\beta},\mathbb{K}_{\alpha,\beta})$ such
that $$\Psi_1(M_{x,y})=\partial (\eta_{x,y})\in
\Ext^1(\mathbb{K}_{\alpha,\beta},\mathbb{K}_{\alpha,\beta}).$$ It
follows from (3.7) that
$$\Psi_1(M_{kx,ky})=k\Psi_1(M_{x,y}),\qquad
\Psi_1(M_{x+x',y+y'})=\Psi_1(M_{x,y})+\Psi_1(M_{x',y'}).$$ Thus
$\Psi_1$ is an isomorphism of vector spaces.

\begin{proposition} Let $V_{\varepsilon,n}$ be a simple $U_q(\mathfrak{sl}(2))$-module with a basis $\{v_0,\cdots,v_n\}$ satisfying
$E'v_0=0, E'v_p=\varepsilon[n-p+1]v_{p-1}$,
$v_p=\frac{F'^p}{[p]!}v_0$, for $p=1,\cdots,n$;  $F'v_n=0$,
$K'v_p=\varepsilon q^{n-2p}v_p$ for $p=0,\cdots,n$, where $E',K',F'$
are Chevalley generators of $U_q(\mathfrak{sl}(2))$. Then
$$V_{\varepsilon,n}\otimes M_{x,y}\in
\Ext^1(V_{\varepsilon,n,\alpha,\beta},V_{\varepsilon,n,\alpha,\beta}),$$
where $M_{x,y}\in
\Ext^1(\mathbb{K}_{\alpha,\beta},\mathbb{K}_{\alpha,\beta})$. The
action of  $U_{g,h}$ on $V_{\varepsilon,n}\otimes M_{x,y}$ with the
basis
$$\{v_0\otimes w_1,\cdots,v_n\otimes w_1;v_0\otimes
w_2,\cdots,v_n\otimes w_2\}$$ is given by
\begin{eqnarray}\left\{\begin{array}{l}E(v_0\otimes w_1)=E(v_0\otimes w_2)=F(v_n\otimes w_1)=F(v_n\otimes
w_2)=0,\\
E(v_p\otimes w_1)=E'v\otimes gw_1=\varepsilon[n-p+1]\alpha
v_{p-1}\otimes w_1,\\
E(v_p\otimes w_2)=\varepsilon\alpha[n-p+1]v_{p-1}\otimes
w_2+\varepsilon[n-p+1]xv_{p-1}\otimes
w_1,\end{array}\right.\end{eqnarray} for $p=1,\cdots,n$;
\begin{eqnarray}v_p\otimes w_i=\frac{F^{p}}{[p]!}v_1\otimes w_i,\quad
F(v_n\otimes w_i)=0\end{eqnarray} for $p=1,\cdots,n$, $i=1,2$;
 \begin{eqnarray}\left\{\begin{array}{l}
 K(v_p\otimes w_1)=K'v_p\otimes gw_1=\varepsilon\alpha q^{n-2p}(v_p\otimes
w_1),\\
K(v_p\otimes w_2)=\varepsilon\alpha q^{n-2p}(v_p\otimes
w_2)+\varepsilon q^{n-2p}x(v_p\otimes
w_1),\end{array}\right.\end{eqnarray} for $p=0,1,\cdots,n$; and
 \begin{eqnarray}\left\{\begin{array}{lr}g(v_p\otimes w_1)=\alpha(v_p\otimes w_1),& g(v_p\otimes
w_2)=\alpha (v_p\otimes w_2)+x(v_p\otimes w_1),\\ h(v_p\otimes
w_1)=\beta(v_p\otimes w_1),& h(v_p\otimes w_2)=\beta (v_p\otimes
w_2)+y(v_p\otimes w_1),\end{array}\right.\end{eqnarray}for
$p=0,1,\cdots,n$. Moreover, $V_{\varepsilon,n}\otimes-$ is an
injective linear mapping from the linear space
$\Ext^1(\mathbb{K}_{\alpha,\beta},\mathbb{K}_{\alpha,\beta})$ to the
linear space
$\Ext^1(V_{\varepsilon,n,\alpha,\beta},V_{\varepsilon,n,\alpha,\beta})$.
\end{proposition}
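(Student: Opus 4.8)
The plan is to realize $V_{\varepsilon,n}\otimes M_{x,y}$ as the image of the short exact sequence defining $M_{x,y}$ under the exact functor $V_{\varepsilon,n}\otimes_{\mathbb{K}}-$, using the algebra isomorphism of Proposition 2.2, and then to settle injectivity by a highest weight vector computation.

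First I would make the module structure precise. By Proposition 2.2 we may identify $U_{g,h}$ with $\mathbb{K}[g^{\pm1},h^{\pm1}]\otimes U_q(\mathfrak{sl}(2))$ as algebras, and in the notation of that proof $E=gE'$, $K=gK'$, $F=F'$, while $g$ and $h$ act only through the $\mathbb{K}[g^{\pm1},h^{\pm1}]$-tensorand. Hence for every $\mathbb{K}[g^{\pm1},h^{\pm1}]$-module $N$ the vector space $V_{\varepsilon,n}\otimes_{\mathbb{K}}N$ carries a $U_{g,h}$-module structure in which $E',F',K'$ act on $V_{\varepsilon,n}$ and $g,h$ act on $N$; this is functorial in $N$, and exact because exactness of a sequence of $U_{g,h}$-modules is detected on underlying $\mathbb{K}$-vector spaces and $V_{\varepsilon,n}$ is finite-dimensional over $\mathbb{K}$. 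Taking $N=M_{x,y}$ and reading off the actions of $E=gE'$, $F=F'$, $K=gK'$, $g$, $h$ on the basis $\{v_p\otimes w_1,\ v_p\otimes w_2\}$ produces formulas (3.8)--(3.11) by a direct computation, of which I would display only the steps that are not immediate from the separate $U_q(\mathfrak{sl}(2))$- and $\mathbb{K}[g^{\pm1},h^{\pm1}]$-actions. Taking $N=\mathbb{K}_{\alpha,\beta}$ recovers the isomorphism $V_{\varepsilon,n}\otimes\mathbb{K}_{\alpha,\beta}\cong V_{\varepsilon,n,\alpha,\beta}$ already noted after Theorem 3.5.

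Next I would apply $V_{\varepsilon,n}\otimes_{\mathbb{K}}-$ to the defining short exact sequence of $\mathbb{K}[g^{\pm1},h^{\pm1}]$-modules $0\to\mathbb{K}_{\alpha,\beta}\to M_{x,y}\to\mathbb{K}_{\alpha,\beta}\to0$. Exactness of the functor, together with the fact that the structure maps are $U_{g,h}$-linear (since $\mathbb{K}[g^{\pm1},h^{\pm1}]$ acts only on the tensorand being varied), yields a short exact sequence of $U_{g,h}$-modules $0\to V_{\varepsilon,n,\alpha,\beta}\to V_{\varepsilon,n}\otimes M_{x,y}\to V_{\varepsilon,n,\alpha,\beta}\to0$, i.e.\ a class $V_{\varepsilon,n}\otimes M_{x,y}\in\Ext^1(V_{\varepsilon,n,\alpha,\beta},V_{\varepsilon,n,\alpha,\beta})$. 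Since $V_{\varepsilon,n}\otimes_{\mathbb{K}}-$ is an exact $\mathbb{K}$-linear functor, the resulting assignment on Yoneda $\Ext^1$ is $\mathbb{K}$-linear, by the standard fact that an exact additive functor preserves the pullback and pushout that compute the Baer sum (and commutes with scalar multiplication); concretely this is compatible with the identifications $M_{x,y}\boxplus M_{x',y'}=M_{x+x',y+y'}$ and $k\boxdot M_{x,y}=M_{kx,ky}$ recalled above. This gives the linearity claim.

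The step I expect to be the main obstacle is injectivity, i.e.\ that $V_{\varepsilon,n}\otimes M_{x,y}$ is a non-split extension whenever $(x,y)\neq(0,0)$. Suppose it splits; then there is a $U_{g,h}$-submodule $W$ with $W\oplus(V_{\varepsilon,n}\otimes\mathbb{K}_{\alpha,\beta})=V_{\varepsilon,n}\otimes M_{x,y}$, where $V_{\varepsilon,n}\otimes\mathbb{K}_{\alpha,\beta}$ is the span of the vectors $v_p\otimes w_1$. Then $W\cong V_{\varepsilon,n,\alpha,\beta}$ is simple, so by Theorem 3.5(b) it is generated by a highest weight vector $v\in W$, which is then a highest weight vector of $V_{\varepsilon,n}\otimes M_{x,y}$. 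Writing $v=\sum_p a_p(v_p\otimes w_1)+\sum_p b_p(v_p\otimes w_2)$, the relation $Ev=0$ together with (3.8), the hypothesis $\alpha\neq0$, and the fact that $[m]\neq0$ for all $m\geq1$ (as $q$ is not a root of unity) force $a_p=b_p=0$ for $p\geq1$, so $v\in\mathbb{K}(v_0\otimes w_1)+\mathbb{K}(v_0\otimes w_2)$. Then (3.11) shows that a simultaneous $g,h$-eigenvector in this plane must be a scalar multiple of $v_0\otimes w_1$, because the $v_0\otimes w_1$-components of $g(v_0\otimes w_2)$ and $h(v_0\otimes w_2)$ are $x$ and $y$, which are not both zero. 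Hence $v\in\mathbb{K}(v_0\otimes w_1)$, so $W=U_{g,h}v\subseteq U_{g,h}(v_0\otimes w_1)=V_{\varepsilon,n}\otimes\mathbb{K}_{\alpha,\beta}$, contradicting $W\cap(V_{\varepsilon,n}\otimes\mathbb{K}_{\alpha,\beta})=0$. Thus the extension is non-split, the kernel of $V_{\varepsilon,n}\otimes-$ is trivial, and the map is injective.
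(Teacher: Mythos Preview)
Your argument is correct and complete, but it diverges from the paper's proof in both the linearity and injectivity steps. For linearity, the paper works concretely: it chooses a projective presentation $P\otimes R\twoheadrightarrow V_{\varepsilon,n}\otimes\mathbb{K}_{\alpha,\beta}$, exhibits $V_{\varepsilon,n}\otimes M_{x,y}$ as the pushout along an explicit map $\zeta_{x,y}$ built from the $\eta_{x,y}$ of (3.6), and verifies $\zeta_{kx,ky}=k\zeta_{x,y}$ and $\zeta_{x+x',y+y'}=\zeta_{x,y}+\zeta_{x',y'}$ directly before invoking the connecting map $\partial$. Your appeal to the general principle that an exact $\mathbb{K}$-linear functor induces a linear map on Yoneda $\Ext^1$ bypasses this bookkeeping. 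For injectivity, the paper does \emph{not} use linearity: it assumes an equivalence of extensions between $V_{\varepsilon,n}\otimes M_{x,y}$ and $V_{\varepsilon,n}\otimes M_{x',y'}$ (with identities on the outer terms), writes $\mu(v_0\otimes w_2)=v_0\otimes w_2'+v\otimes w_1'$, and reads off $x=x'$, $y=y'$ from $g$- and $h$-equivariance. You instead leverage the linearity you have already proved and show the kernel is trivial by a highest-weight-vector argument forcing any splitting to live inside the image of $\mathbb{K}_{\alpha,\beta}$; this is equally valid and of comparable length. The paper's route has the minor advantage of proving injectivity independently of linearity, while yours is more conceptual on the linearity side.
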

\begin{proof} We only need to prove that the mapping $V_{\varepsilon,n}\otimes-$ is an injective
linear mapping, since it is easy to check the other results.
Consider the following  commutative diagram
$$\xymatrix{
  0\ar[r]^{ } & V_{\varepsilon,n}\otimes \mathbb{K}_{\alpha,\beta} \ar[d]_{\id} \ar[r]^{\id\otimes\varphi } & V_{\varepsilon,n}\otimes M_{x,y} \ar[d]_{\mu}
  \ar[r]^{\id\otimes\psi } &V_{\varepsilon,n}\otimes \mathbb{K}_{\alpha,\beta} \ar[d]_{\id} \ar[r]^{} &0  \\
  0 \ar[r]^{ } &V_{\varepsilon,n}\otimes \mathbb{K}_{\alpha,\beta}\ar[r]^{\id\otimes\varphi' } & V_{\varepsilon,n}\otimes M_{x',y'} \ar[r]^{\id\otimes\psi'}
  & V_{\varepsilon,n}\otimes \mathbb{K}_{\alpha,\beta} \ar[r]^{} &
  0.
  }$$
Since $(\id\otimes\psi')\mu(v_0\otimes w_2)=(\id\otimes
\psi)(v_0\otimes w_2)=v_0\otimes 1=(\id\otimes \psi')(v_0\otimes
w_2')$,
$$\mu(v_0\otimes w_2)=v_0\otimes w_2'+v\otimes w_1'$$ for some $v\in
V_{\varepsilon,n}$. Then $g\mu(v_0\otimes w_2)=\alpha (v_0\otimes
w_2'+v\otimes w_1')+x'v_0\otimes w_1' ,$ and
$$\mu(g(v_0\otimes w_2))=\mu(\alpha v_0\otimes w_2+xv_0\otimes
w_1)=\alpha(v_0\otimes w_2'+v\otimes w_1')+xv_0\otimes w_1'.$$ Since
$g\mu(v_0\otimes w_2)=\mu(g(v_0\otimes w_2)),$  we have $x=x'$.
Similarly, we can prove that $y=y'$. So $V_{\varepsilon,n}\otimes-$
induces an injective mapping from
$\Ext^1(\mathbb{K}_{\alpha,\beta},\mathbb{K}_{\alpha,\beta})$ to
$\Ext^1(V_{\varepsilon,n,\alpha,\beta},V_{\varepsilon,n,\alpha,\beta})$.

To prove $V_{\varepsilon,n}\otimes -$ is linear, we choose an exact
sequence of
  $U_q(\mathfrak{sl}(2))$-modules
 $$\xymatrix@C=0.5cm{
  0 \ar[r] & L \ar[r]^{} & P \ar[r]^{f} &
  V_{\varepsilon,n}
   \ar[r] & 0,}$$  where $P$ is a finitely generated projective $U_q(\mathfrak{sl}(2))$-module. Then
$P\otimes R$ is a projective $U_{g,h}$-module, and the kernel $Q$ of
$F$ is $\Ker f\otimes R+P\otimes I,$ where $F$ is a homomorphism
from $P\otimes R$ to $V_{\varepsilon,n}\otimes
\mathbb{K}_{\alpha,\beta}$ given by $F(a\otimes b)=f(a)\otimes
b\cdot 1$, $$I=R(g-\alpha)+R(h-\beta).$$ Applying the functor
$\Hom_{U_{g,h}}(-,V_{\varepsilon,n}\otimes
\mathbb{K}_{\alpha,\beta})$ to the exact sequence
$$\xymatrix@C=0.5cm{
  0 \ar[r] & Q \ar[r]^{} & P\otimes R \ar[r]^{F} &
  V_{\varepsilon,n}\otimes \mathbb{K}_{\alpha,\beta}
   \ar[r] & 0}$$ yields an exact sequence
\begin{eqnarray} \qquad  \xymatrix@C=0.4cm{
     \Hom_{U_{g,h}}(U_{g,h},A) \ar[r]^{\tau} & \Hom_{U_{g,h}}(Q,A)
      \ar[r]^{\partial} & \Ext^1(A,A) \ar[r]^{} & 0,}\end{eqnarray}
where $A=V_{\varepsilon,n}\otimes \mathbb{K}_{\alpha,\beta}.$ Define
a homomorphism of $U_{g,h}$-modules $\sigma: P\otimes R\rightarrow
V_{\varepsilon,n}\otimes M_{x,y}$ by
$$\sigma(a\otimes b)=f(a)\otimes b\cdot w_2,\qquad a\otimes b\in
P\otimes R,$$ and a homomorphism of $U_{g,h}$-modules
$\zeta_{x,y}:Q\rightarrow V_{\varepsilon,n}\otimes
\mathbb{K}_{\alpha,\beta}$ by
$$\zeta_{x,y}(a\otimes
b)=\left\{\begin{array}{lll}0,&&a\otimes b\in \Ker f\otimes L,\\
\eta_{x,y}(b)f(a)\otimes 1,&& a\otimes b\in P\otimes
(R(g-\alpha)+R(h-\beta)),\end{array}\right.$$ where $\eta_{x,y}$ is
defined by (3.6). Let $\nu$ be the embedding mapping from $Q$ to
$P\otimes R$. Then we have the following commutative diagram:
$$\xymatrix{
  0\ar[r]^{ } & Q \ar[d]_{\zeta_{x,y}} \ar[r]^{\nu } & P\otimes R \ar[d]_{\sigma}
  \ar[r]^{F } & V_{\varepsilon,n}\otimes\mathbb{K}_{\alpha,\beta} \ar[d]_{\id} \ar[r]^{} &0  \\
  0 \ar[r]^{ } & V_{\varepsilon,n}\otimes\mathbb{K}_{\alpha,\beta}\ar[r]^{1\otimes\varphi }
  & V_{\varepsilon,n}\otimes M_{x,y} \ar[r]^{1\otimes\psi} & V_{\varepsilon,n}\otimes\mathbb{K}_{\alpha,\beta} \ar[r]^{} &
  0.
  }$$It is easy to check that $V_{\varepsilon,n}\otimes M_{x,y}$ is the pushout of $\zeta_{x,y}$
and $\nu$. If we use $M_{kx,ky}$ for any $k\in \mathbb{K}$ to
replace $M_{x,y}$,  we will obtain a homomorphism $\zeta_{kx,ky}$
from $Q$ to $V_{\varepsilon,n}\otimes\mathbb{K}_{\alpha,\beta}$.
Similarly, we  get a homomorphism $\zeta_{x+x',y+y'}$ from $Q$ to
$V_{\varepsilon,n}\otimes\mathbb{K}_{\alpha,\beta}$ by using
$M_{x+x',y+y'}$ to replace $M_{x,y}$. From the definitions of these
mappings and (3.7), we obtain the following
\begin{eqnarray}\zeta_{kx,ky}=k\zeta_{x,y},\qquad
\zeta_{x+x',y+y'}=\zeta_{x,y}+\zeta_{x',y'}.\end{eqnarray} We abuse
notation and write $V_{\varepsilon,n}\otimes M_{x,y}$ for  the
following exact sequence $$\xymatrix{
 0 \ar[r]^{ } & V_{\varepsilon,n}\otimes\mathbb{K}_{\alpha,\beta}\ar[r]^{1\otimes\varphi }
  & V_{\varepsilon,n}\otimes M_{x,y} \ar[r]^{1\otimes\psi} & V_{\varepsilon,n}\otimes\mathbb{K}_{\alpha,\beta} \ar[r]^{} &
  0.  }$$
Define $$(V_{\varepsilon,n}\otimes M_{x,y})\boxplus
(V_{\varepsilon,n}\otimes M_{x',y'})=V_{\varepsilon,n}\otimes
M_{x+x',y+y'}\quad k\boxdot(V_{\varepsilon,n}\otimes
M_{x,y})=V_{\varepsilon,n}\otimes M_{kx,ky},$$ for $k\in
\mathbb{K}.$ Then $\{V_{\varepsilon,n}\otimes M_{x,y}|x,y\in
\mathbb{K}\}$ becomes a vector space with the above operations.
 By
[12, Theorem 3.4.3], we have an injective linear mapping $\Psi_2$ of
linear spaces from
$$\{V_{\varepsilon,n}\otimes M_{x,y}|x,y\in \mathbb{K}\}$$ to
$\Ext^1(V_{\varepsilon,n}\otimes\mathbb{K}_{\alpha,\beta},V_{\varepsilon,n}\otimes\mathbb{K}_{\alpha,\beta})$
such that $$\Psi_2(V_{\varepsilon,n}\otimes M_{x,y})=\partial
(\zeta_{x,y})\in
\Ext^1(V_{\varepsilon,n}\otimes\mathbb{K}_{\alpha,\beta},V_{\varepsilon,n}\otimes\mathbb{K}_{\alpha,\beta}).$$
Therefore,
$$\Psi_2(V_{\varepsilon,n}\otimes M_{kx,ky})=\partial(k\zeta_{x,y})=k\Psi_2(V_{\varepsilon,n}\otimes M_{x,y})$$ and $$
\Psi_2(V_{\varepsilon,n}\otimes
M_{x+x',y+y'})=\Psi_2(V_{\varepsilon,n}\otimes
M_{x,y})+\Psi_2(V_{\varepsilon,n}\otimes M_{x',y'})$$  by (3.13).
Since  $\Psi_2$ is injective and linear,
$$V_{\varepsilon,n}\otimes k\boxdot M_{x,y} =V_{\varepsilon,n}\otimes M_{kx,ky}  =k\boxdot (V_{\varepsilon,n}\otimes M_{x,y})$$ and $$
V_{\varepsilon,n}\otimes( M_{x,y}\boxplus
M_{x',y'})=V_{\varepsilon,n}\otimes
M_{x+x',y+y'}=(V_{\varepsilon,n}\otimes M_{x,y})\boxplus
(V_{\varepsilon,n}\otimes M_{x',y'}).$$ By now, we have completed
the proof.
\end{proof}

We now completely classify all extensions between two
finite-dimensional simple $U_{g,h}$-modules.
\begin{theorem}Suppose $q$ is not a root of unity. Given two simple  $\mathbb{K}[g^{\pm1},h^{\pm1}]$-modules
$\mathbb{K}_{\alpha,\beta}$, $\mathbb{K}_{\alpha',\beta'}$ and a
finite-dimensional simple $U_q(\mathfrak{sl}(2))$-module
$V_{\varepsilon,n}$, the assignment $V_{\varepsilon,n}\otimes -$ is
an isomorphism of vector spaces from
$\Ext^1(\mathbb{K}_{\alpha',\beta'},\mathbb{K}_{\alpha,\beta})$ to
$\Ext^1(M',M)$. Here, $$M:=V_{\varepsilon,n}\otimes
\mathbb{K}_{\alpha,\beta}\cong V_{\varepsilon,n,\alpha,\beta},\qquad
M':=V_{\varepsilon,n}\otimes \mathbb{K}_{\alpha',\beta'}\cong
V_{\varepsilon,n,\alpha',\beta'}.$$ Moreover,
$\Ext^1(V_{\varepsilon,m,\alpha,\beta},V_{\varepsilon',n,\alpha',\beta'})=0$
provided that $(\varepsilon,m,\alpha,\beta)\neq
(\varepsilon',n,\alpha',\beta')$.
\end{theorem}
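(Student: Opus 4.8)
The plan is to reduce the whole computation to homological algebra over the two tensor factors of $U_{g,h}$. Write $R=\mathbb{K}[g^{\pm1},h^{\pm1}]$ and $B=U_q(\mathfrak{sl}(2))$, so that $U_{g,h}\cong R\otimes_{\mathbb{K}}B$ as algebras by Proposition 2.2, and recall from the discussion following Theorem 3.5 that every finite-dimensional simple $U_{g,h}$-module has the form $\mathbb{K}_{\alpha,\beta}\otimes V$ with $V$ a simple finite-dimensional $B$-module. Both $R$ and $B$ are Noetherian of finite global dimension, so such a simple over either ring has a finite resolution by finitely generated projectives (an explicit one for $\mathbb{K}_{\alpha,\beta}$ is given in the proof of Proposition 3.8). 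Because $\mathbb{K}$ is a field, the external tensor product over $\mathbb{K}$ of two such resolutions is a finitely generated projective resolution over $R\otimes B$, and finite generation identifies the associated $\Hom_{R\otimes B}$-complex with the $\mathbb{K}$-tensor product of the $\Hom$-complexes over $R$ and over $B$. The algebraic Künneth theorem for bounded complexes of $\mathbb{K}$-vector spaces (see e.g.~[12]) then gives, for finite-dimensional simples $S\cong\mathbb{K}_{\alpha,\beta}\otimes V$ and $S'\cong\mathbb{K}_{\alpha',\beta'}\otimes V'$,
\begin{align*}
\Ext^1_{U_{g,h}}(S',S)&\cong\bigl(\Ext^1_R(\mathbb{K}_{\alpha',\beta'},\mathbb{K}_{\alpha,\beta})\otimes\Hom_B(V',V)\bigr)\\
&\quad\oplus\bigl(\Hom_R(\mathbb{K}_{\alpha',\beta'},\mathbb{K}_{\alpha,\beta})\otimes\Ext^1_B(V',V)\bigr).
\end{align*}

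Next I would evaluate the four factors. Since $q$ is not a root of unity, every finite-dimensional $U_q(\mathfrak{sl}(2))$-module is semisimple (complete reducibility, see [6]), so any extension of finite-dimensional $B$-modules splits, $\Ext^1_B(V',V)=0$, and the second summand vanishes identically. Schur's lemma over the algebraically closed field $\mathbb{K}$ gives $\Hom_B(V',V)\cong\mathbb{K}$ if $V'\cong V$ and $0$ otherwise, and Proposition 3.8 gives $\Ext^1_R(\mathbb{K}_{\alpha',\beta'},\mathbb{K}_{\alpha,\beta})\cong\mathbb{K}^2$ if $(\alpha',\beta')=(\alpha,\beta)$ and $0$ otherwise. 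Hence $\Ext^1_{U_{g,h}}(S',S)\cong\mathbb{K}^2$ when $S\cong S'$ and $\Ext^1_{U_{g,h}}(S',S)=0$ when $S\not\cong S'$. In particular $\Ext^1(V_{\varepsilon,m,\alpha,\beta},V_{\varepsilon',n,\alpha',\beta'})=0$ whenever $(\varepsilon,m,\alpha,\beta)\neq(\varepsilon',n,\alpha',\beta')$, which is the ``moreover'' assertion.

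For the first assertion, apply the displayed formula with $S=M=V_{\varepsilon,n}\otimes\mathbb{K}_{\alpha,\beta}$ and $S'=M'=V_{\varepsilon,n}\otimes\mathbb{K}_{\alpha',\beta'}$; then $V=V'=V_{\varepsilon,n}$, so $\Hom_B(V',V)\cong\mathbb{K}$ and $\Ext^1_{U_{g,h}}(M',M)\cong\Ext^1_R(\mathbb{K}_{\alpha',\beta'},\mathbb{K}_{\alpha,\beta})$. If $(\alpha,\beta)\neq(\alpha',\beta')$, this space and $\Ext^1(\mathbb{K}_{\alpha',\beta'},\mathbb{K}_{\alpha,\beta})$ are both zero by Proposition 3.8, so $V_{\varepsilon,n}\otimes-$ is trivially an isomorphism. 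If $(\alpha,\beta)=(\alpha',\beta')$, Proposition 3.10 shows that $V_{\varepsilon,n}\otimes-\colon\Ext^1(\mathbb{K}_{\alpha,\beta},\mathbb{K}_{\alpha,\beta})\to\Ext^1(M,M)$ is linear and injective; its source is $2$-dimensional by Proposition 3.8 and its target is $2$-dimensional by the displayed computation, so it is a linear isomorphism.

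The step that needs care is the Künneth bookkeeping: one must check that $P\otimes_{\mathbb{K}}Q$ is $R\otimes B$-projective when $P$ is $R$-projective and $Q$ is $B$-projective (immediate over a field, with no $\mathrm{Tor}$ corrections), that the external product of the two resolutions is again exact, and that finite generation of the projectives is exactly what yields $\Hom_{R\otimes B}(P\otimes Q,M\otimes N)\cong\Hom_R(P,M)\otimes_{\mathbb{K}}\Hom_B(Q,N)$. I do not need the Künneth isomorphism to agree a priori with $V_{\varepsilon,n}\otimes-$, since injectivity is supplied by Proposition 3.10 and the two dimensions coincide. A Künneth-free alternative is available as well: in any extension $0\to M\to N\to M'\to 0$ the module $N$ is finite-dimensional, hence $N|_B$ is semisimple and $N\cong\Hom_B(V_{\varepsilon,n},N)\otimes V_{\varepsilon,n}$ as $B$-modules; since $R$ commutes with $B$ it acts on $W:=\Hom_B(V_{\varepsilon,n},N)$, this identification is $U_{g,h}$-linear, and $W$ becomes an $R$-module extension of $\mathbb{K}_{\alpha',\beta'}$ by $\mathbb{K}_{\alpha,\beta}$ with $N\cong V_{\varepsilon,n}\otimes W$, which makes the surjectivity of $V_{\varepsilon,n}\otimes-$ explicit (and the vanishing statements follow the same way, the corresponding $R$-extension or isotypic decomposition forcing $N$ to split).
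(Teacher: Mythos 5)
Your argument is correct, but it takes a genuinely different route from the paper's. The paper proves the vanishing assertion by constructing a central element (built from $g-\alpha'$, $h-\beta'$, or from the Casimir $C$ when $(\alpha,\beta)=(\alpha',\beta')$ but $(\varepsilon,m)\neq(\varepsilon',n)$) that acts as the identity on one simple and as zero on the other, forcing any extension to split; and it proves the isomorphism assertion by taking an arbitrary self-extension $V$, splitting it over $U_q(\mathfrak{sl}(2))$ by complete reducibility, choosing explicit highest-weight bases, and computing the $U_{g,h}$-action to identify $V$ with $V_{\varepsilon,n}\otimes M_{x_0,y_0}$ --- this exhibits surjectivity of $V_{\varepsilon,n}\otimes-$ directly, and Proposition~3.9 supplies injectivity and linearity. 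You instead compute $\dim_{\mathbb{K}}\Ext^1_{U_{g,h}}$ once and for all via the K\"unneth theorem over the algebra decomposition $U_{g,h}\cong R\otimes_{\mathbb{K}}B$ ($R=\mathbb{K}[g^{\pm1},h^{\pm1}]$, $B=U_q(\mathfrak{sl}(2))$), using $\Ext^1_B=0$ by complete reducibility together with Proposition~3.8; this gives both the vanishing statements in one stroke and the target dimension $\dim\Ext^1(M,M)=2$, so that injectivity of $V_{\varepsilon,n}\otimes-$ from Proposition~3.9 (you write ``Proposition 3.10,'' which is a typo --- 3.10 is the theorem under proof) upgrades to bijectivity by dimension count. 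The K\"unneth route avoids the case-by-case construction of splitting central elements and never needs to identify what the abstract isomorphism is, at the cost of carefully checking the Hom-swap and exactness of the external tensor of finitely generated projective resolutions; the paper's route is more elementary and self-contained. Your ``K\"unneth-free alternative'' --- setting $W=\Hom_B(V_{\varepsilon,n},N)$ with its induced $R$-action, noting the evaluation map gives a $U_{g,h}$-isomorphism $N\cong V_{\varepsilon,n}\otimes W$, and reading off $W$ as an $R$-extension --- is essentially the paper's surjectivity argument rephrased in a coordinate-free way, and is arguably cleaner than either your main line or the paper's basis computation.
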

\begin{proof} Let $C$ be the Casimir element of $U_{g,h}$ defined in Corollary
2.4,
$$d_{m,n}=\frac{q^{m+1}}
{(q^{m-n}-\varepsilon\varepsilon')(q^{m+n+2}-\varepsilon\varepsilon')}
\left(C-\varepsilon'\alpha'\frac
{q^{n+1}+q^{-n-1}}{(q-q^{-1})^2}\right),$$
and$$a=\left\{\begin{array}{ll}\frac{h-\beta^{\prime}}{\beta-\beta^{\prime}},&
if\qquad \beta\neq \beta^{\prime}\\
\\
\frac{g-\alpha^{\prime}}{\alpha-\alpha^{\prime}},&
if\qquad \alpha\neq \alpha'\\
\\
\frac{\varepsilon(q-q^{-1})^2}{\alpha}d_{m,n},&otherwise.\end{array}\right.$$
Then $a$ is in the center of $U_{g,h}$ by Corollary 2.4.

Observe that $V_{\varepsilon,m,\alpha,\beta} \cong
V_{\varepsilon',n,\alpha',\beta'}$ if and only if
$\varepsilon=\varepsilon'$, $\alpha=\alpha'$, $\beta=\beta'$, and
$m=n$.

Suppose $V_{\varepsilon,m,\alpha,\beta}$ is not isomorphic to
$V_{\varepsilon',n,\alpha',\beta'}$, then
$(\varepsilon,m,\alpha,\beta)\neq (\varepsilon',n,\alpha',\beta')$.
 Let $v\in
V_{\varepsilon,m,\alpha,\beta}$ be a nonzero highest weight vector
satisfying
$$Kv=\varepsilon\alpha q^{m}v,\quad gv=\alpha v,\quad hv=\beta v, \quad Ev=0.$$
Then
$$\begin{array}{l}d_{m,n}v=\frac{q^{m+1}}{q^{2m+2}-\varepsilon\varepsilon'q^{m+n+2}-\varepsilon\varepsilon'q^{m-n}+1}(FE+
\frac{qK+q^{-1}K^{-1}g^2}{(q-q^{-1})^2}-\varepsilon'\alpha
\frac{q^{n+1}+q^{-n-1}}{(q-q^{-1})^2})v\\
\\
\qquad\quad=\frac{\varepsilon\alpha}{(q-q^{-1})^2}v,\end{array}$$
 in the case
$\alpha'=\alpha$. Therefore $am=m$ for any $m\in
V_{\varepsilon,m,\alpha,\beta}$ by Schur's Lemma, since
$V_{\varepsilon,m,\alpha,\beta}$ is a simple module and $a$ induces
an endomorphism of $V_{\varepsilon,m,\alpha,\beta}$. Similarly, we
can prove that $am=0$ for any $m\in
V_{\varepsilon',n,\alpha',\beta'}$.

 Consider the short exact sequence of $U_{g,h}$-modules
\begin{eqnarray}0 \xrightarrow [ ]{\qquad}V_{\varepsilon,m,\alpha,\beta} \xrightarrow[ ]{\quad \phi\quad} V
 \xrightarrow[ ]{\quad\varphi\quad} V_{\varepsilon',n,\alpha',\beta'}\xrightarrow[ ]{\qquad}
0.\end{eqnarray} Since $a\varphi(V)=\varphi(aV)=0$,
$$\phi(V_{\varepsilon,m,\alpha,\beta})=\Ker\varphi\supseteq
aV\supseteq
a\phi(V_{\varepsilon,m,\alpha,\beta})=\phi(aV_{\varepsilon,m,\alpha,\beta})
=\phi(V_{\varepsilon,m,\alpha,\beta}).$$ So
$\phi(V_{\varepsilon,m,\alpha,\beta})=aV$. In particular, $a(av)=av$
for any $v\in V$. Therefore $$V= \Ker a\oplus aV=\Ker a\oplus
\phi(V_{\varepsilon,m,\alpha,\beta}).$$ Hence the sequence (3.14) is
splitting and
$\Ext^1(V_{\varepsilon',n,\alpha',\beta'},V_{\varepsilon,m,\alpha,\beta})=0$.

Suppose $(\alpha,\beta)\neq (\alpha',\beta')$. Then $\Ext^1(M',M)=0$
and
$\Ext^1(\mathbb{K}_{\alpha',\beta'},\mathbb{K}_{\alpha,\beta})=0$ by
Proposition 3.8. It is trivial that $V_{\varepsilon,n}\otimes-$ is
an isomorphism of linear spaces.

 Next, we assume that
$V_{\varepsilon,n,\alpha',\beta'}\cong
V_{\varepsilon,m,\alpha,\beta}\cong V_{\varepsilon,n}\otimes
\mathbb{K}_{\alpha,\beta}$. Consider the following exact sequence of
$U_{g,h}$-modules
\begin{eqnarray}0 \xrightarrow [ ]{\qquad}M \xrightarrow[ ]{\quad \phi\quad} V
 \xrightarrow[ ]{\quad\varphi\quad} M\xrightarrow[ ]{\qquad}
0.\end{eqnarray} Since $U_q(\mathfrak{sl}(2))$ is a subalgebra of
$U_{g,h}$, we can regard the exact sequence (3.15) as a sequence of
$U_q(\mathfrak{sl}(2))$-modules. Since every finite-dimensional
$U_q(\mathfrak{sl}(2))$-module is semisimple, there is a
homomorphism $\lambda$ of $U_q(\mathfrak{sl}(2))$-modules from $M$
to $V$ such that $\varphi\lambda=\id_{M}$. For any $v\in V$, we have
$v=(v-\lambda\varphi(v))+\lambda\varphi(v)$. Moreover,
$\varphi(v-\lambda\varphi(v))=0$. Hence $$V=\Ker\varphi\oplus
\im\lambda=\im\phi\oplus \im\lambda,$$ where $\im\lambda\cong
V_{\varepsilon,n}$ as $U_q(\mathfrak{sl}(2))$-modules. Let
$K'=Kg^{-1}$. Suppose $u_1,u_2$ are the highest weight vectors of
the $U_{g,h}$-module $\im\phi$ and the
$U_q(\mathfrak{sl}(2))$-module $\im\lambda$ respectively. Then
$$\{\frac{F^i}{[i]!}u_1, \frac{F^i}{[i]!} u_2|i=0,\cdots,n\}$$ is a
basis of $V$. Moreover,
$$K\varphi(u_2)=g\varphi(K'u_2)=\varepsilon\alpha
q^n\varphi(u_2),\qquad E\varphi(u_2)=g\varphi(Eg^{-1}u_2)=0,$$
$$g\varphi(u_2)=\alpha\varphi(u_2),\qquad
h\varphi(u_2)=\beta\varphi(u_2).$$ So $\varphi(u_2)$ is a highest
weight vector of $M$. Suppose
$gu_2=\sum\limits_{i=0}^na_i\frac1{[i]!}F^iu_1+\sum\limits_{i=0}^nx_i\frac1{[i]!}F^iu_2$.
Then \begin{eqnarray}\qquad \varepsilon
q^ngu_2=gK'u_2=K'gu_2=\varepsilon(\sum\limits_{i=0}^n
q^{n-2i}a_i\frac1{[i]!}
F^iu_1+\sum\limits_{i=0}^nq^{n-2i}x_i\frac1{[i]!}F^iu_2).\end{eqnarray}
Since $q^m\neq 1$ for any positive integer $m$, we obtain
$a_i=x_i=0$, $i=1,2,\cdots,n$ from (3.16).  Hence
$gu_2=a_0u_2+x_0u_1$. Moreover,
$a_0\varphi(u_2)=\varphi(gu_2)=g\varphi(u_2)=\alpha \varphi(u_2).$
So $a_0=\alpha$. Similarly, we can prove that $hu_2=\beta
u_2+y_0u_1$. Moreover, by using Lemma 3.7, one can prove that
\begin{eqnarray}\left\{\begin{array}{l}E(u_1)=E(u_2)=F(\frac1{[n]!}F^n u_1)=F(\frac1{[n]!}F^n
u_2)=0,\\
E(\frac1{[p]!}F^p u_1)=\varepsilon[n-p+1]\alpha F^{p-1} u_1,\\
E(\frac{F^p}{[p]!}
u_2)=\varepsilon\alpha[n-p+1]\frac{F^{p-1}}{[p-1]!}
u_2+\varepsilon[n-p+1]x_0\frac{F^{p-1}}{[p-1]!}u_1,\end{array}\right.\end{eqnarray}
for $p=1,\cdots,n$;
\begin{eqnarray}K(\frac{F^p}{[p]!}u_2)=\varepsilon\alpha q^{n-2p}\frac{F^p}{[p]!}
u_2+\varepsilon q^{n-2p}x_0\frac{F^p}{[p]!} u_1,\end{eqnarray} for
$p=0,1,\cdots,n$; and
\begin{eqnarray}\left\{\begin{array}{ll} g(\frac{F^p}{[p]!} u_1)=\alpha\frac{F^p}{[p]!}u_1,& g(\frac{F^p}{[p]!}
u_2)=\alpha \frac{F^p}{[p]!}u_2+x_0\frac{F^p}{[p]!} u_1,\\
h(\frac{F^p}{[p]!} u_1)=\beta\frac{F^p}{[p]!} u_1,&
h(\frac{F^p}{[p]!} u_2)=\beta \frac{F^p}{[p]!}
u_2+y_0\frac{F^p}{[i]!} u_1,\end{array}\right.\end{eqnarray} for
$p=0,1,\cdots,n$.

Define $\tau(\frac{F^i}{[i]!}u_j)=v_{i}\otimes w_j$ for
$i=0,1,\cdots,n;j=1,2$, and extend it by linearity. Comparing the
relations from (3.8) to (3.11) in Proposition 3.9 with the above
relations from (3.17) to (3.19), we know that $\tau$ is an
isomorphism of $U_{g,h}$-modules from $V$ to
$V_{\varepsilon,n}\otimes M_{x_0,y_0}$. Hence
$V_{\varepsilon,n}\otimes-$ is an isomorphism of linear spaces by
Proposition 3.9.
\end{proof}
\begin{remark}Since $\Ext^1(V_{\varepsilon,n},V_{\varepsilon,n})=0$ and $\Ext^1(V_{\varepsilon,n}\otimes \mathbb{K}_{\alpha,\beta},
V_{\varepsilon,n}\otimes \mathbb{K}_{\alpha,\beta})\neq 0$, the
functor $-\otimes \mathbb{K}_{\alpha,\beta}$ is the zero mapping
from $\Ext^1(V_{\varepsilon,n},V_{\varepsilon,n})$ to
$\Ext^1(V_{\varepsilon,n}\otimes \mathbb{K}_{\alpha,\beta},
V_{\varepsilon,n}\otimes \mathbb{K}_{\alpha,\beta})$. Hence the
functor $-\otimes \mathbb{K}_{\alpha,\beta}$ does not induce an
isomorphism.
\end{remark}

Since $U_{g,h}$ is a Hopf algebra, the dual $M^*$ of any
$U_{g,h}$-module $M$ is still a $U_{g,h}$ module. For $a\in
U_{g,h}$, $f\in M^*$, the action of $a$ on $f$ is given
by$$(af)(m):=f((Sa)m),\qquad m\in M,$$ where $S$ is the antipode of
$U_{g,h}$. Next we describe the dual module of a simple module over
$U_{g,h}$.

\begin{theorem} The dual module $V_{\varepsilon,n,\alpha,\beta}^*$ of the simple $U_{g,h}$-module $V_{\varepsilon,n,\alpha,\beta}$
is a simple module, and $V_{\varepsilon,n,\alpha,\beta}^*\cong
V_{\varepsilon,n,\alpha^{-1},\beta^{-1}}$.\end{theorem}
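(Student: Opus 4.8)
The strategy is to produce an explicit highest weight vector in $V_{\varepsilon,n,\alpha,\beta}^*$ and apply Theorem 3.5(b). Recall that $V_{\varepsilon,n,\alpha,\beta}$ has the basis $\{v_0,\dots,v_n\}$ of Lemma 3.4, with $Ev_0=0$, $Kv_p=\varepsilon\alpha q^{n-2p}v_p$, $gv_p=\alpha v_p$, $hv_p=\beta v_p$, $Fv_{p-1}=[p]v_p$. Let $\{v_0^*,\dots,v_n^*\}$ be the dual basis. Using the antipode formulas (1.11)--(1.12), for $f\in V^*$ and $m\in V$ one has $(Kf)(m)=f(K^{-1}m)$, $(gf)(m)=f(g^{-1}m)$, $(hf)(m)=f(h^{-1}m)$, $(Ef)(m)=f(S(E)m)=-f(EK^{-1}m)$, and $(Ff)(m)=f(S(F)m)=-f(KFg^{-2}m)$. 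From these one computes directly that $g$ and $h$ act on every $v_p^*$ by the scalars $\alpha^{-1}$ and $\beta^{-1}$ respectively, and that $Kv_p^*=\varepsilon\alpha^{-1}q^{-(n-2p)}v_p^*=\varepsilon\alpha^{-1}q^{2p-n}v_p^*$; in particular $K$ acts on $v_n^*$ by the scalar $\varepsilon\alpha^{-1}q^{n}$.

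Next I would identify which dual basis vector is annihilated by $E$. Since $E$ lowers the $K$-eigenvalue on $V$ (Lemma 3.1) it raises it on $V^*$, so $E$ should send $v_p^*$ to a multiple of $v_{p-1}^*$, and the candidate highest weight vector is $v_n^*$. A short computation with $(Ev_n^*)(v_p)=-v_n^*(EK^{-1}v_p)$, using $Ev_p\in\mathbb{K}v_{p-1}$ from Lemma 3.4 and the formula (3.1) for $Ev_p$, gives $(Ev_n^*)(v_p)=0$ for all $p$ (the only potentially nonzero term would require $EK^{-1}v_p$ to have a $v_n$-component, i.e. $p-1=n$, which is impossible for $0\le p\le n$). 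Hence $Ev_n^*=0$, so $v_n^*$ is a highest weight vector of weight $(\varepsilon\alpha^{-1}q^n,\alpha^{-1},\beta^{-1})$.

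It remains to check that $v_n^*$ generates $V^*$, i.e. that $V^*$ is standard cyclic with this highest weight vector. For this I would show that the powers $F^p v_n^*$ span $V^*$: a direct computation of $(Fv_p^*)(v_q)=-v_p^*(KFg^{-2}v_q)$ using $Fv_q=[q+1]v_{q+1}$ shows that $F$ maps $v_p^*$ to a nonzero multiple of $v_{p-1}^*$ for $p\ge 1$ (and $Fv_0^*=0$), so starting from $v_n^*$ one reaches $v_{n-1}^*,\dots,v_0^*$ up to nonzero scalars. Therefore the submodule generated by $v_n^*$ is all of $V^*$, and by Theorem 3.5(a)(v) it is simple; moreover by Theorem 3.5(b) it is isomorphic to the standard cyclic module of weight $(\varepsilon\alpha^{-1}q^n,\alpha^{-1},\beta^{-1})$, which is exactly $V_{\varepsilon,n,\alpha^{-1},\beta^{-1}}$.

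**Main obstacle.** None of the steps is deep; the only place to be careful is the bookkeeping with the antipode and the $g^{\pm2}$, $K^{\pm1}$ twists appearing in $S(E)$ and $S(F)$ — one must make sure the $g$- and $K$-eigenvalues of $v_p^*$ come out consistently so that $v_n^*$ really has $K$-eigenvalue $\varepsilon\alpha^{-1}q^{n}$ (matching $\dim V^*=n+1$ in Theorem 3.5(a)(i) with base $\alpha^{-1}$) rather than some off-by-a-power-of-$q$ value. Alternatively, one can shortcut the cyclicity check by invoking Proposition 3.3 (a finite-dimensional module contains a highest weight vector and $F$ acts nilpotently) together with Theorem 3.5: any highest weight vector of $V^*$ generates a simple submodule whose dimension is determined by its weight via 3.5(a)(i), and since that dimension is forced to be $n+1=\dim V^*$, the submodule is everything.
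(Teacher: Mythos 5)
Your proof follows essentially the same approach as the paper's: identify $v_n^*$ as the highest weight vector by direct computation with the antipode formulas, establish its weight $(\varepsilon\alpha^{-1}q^n,\alpha^{-1},\beta^{-1})$, and conclude via Theorem 3.5. You additionally verify cyclicity explicitly (via the $F$-action on the dual basis, or the dimension count through Theorem 3.5(a)(i)), a step the paper asserts without detail, and both of your variants for that step are sound since $q$ is not a root of unity.
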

\begin{proof}  By Theorem 3.5, we can assume that the simple module
$V_{\varepsilon,n,\alpha,\beta}$ has a basis $\{v_0,\cdots,$ $v_n\}$
with relations:
$$Kv_p=\varepsilon q^{n-2p}\alpha v_p, \qquad gv_p=\alpha v_p,\qquad hv_p=\beta
v_p$$ for $p=0,1,\cdots,n,$
$$Fv_{n}=0, \qquad Ev_0=0$$ and
$$Ev_p=\varepsilon\frac
{q^{n-(p-1)}\alpha-q^{p-1-n}\alpha}{q-q^{-1}}v_{p-1}=\varepsilon\alpha[n-p+1]v_{p-1}$$
for $p=1,\cdots,n$. Let $\{v_0^*,\cdots,v_n^*\}$ be the dual basis
of $\{v_0,\cdots,v_n\}$. Then $$(E v_n^*)(v_i)=-v_n^*(EK^{-1}v_i)=-
q^{2i-n}[n-i+1]v_n^*(v_{i-1})=0$$ for $i=1,\cdots,n$, and $$(E
v_n^*)(v_0)=-v_n^*(EK^{-1}v_0)=-\varepsilon\alpha^{-1}
q^{-n}v_n^*(0)=0.$$ Hence $E(v_n^*)=0$. Since
$$(Kv_n^*)(v_i)=v_n^*(K^{-1}v_i)=q^{2i-n}\varepsilon\alpha^{-1}v_n^*(v_i)=\delta_{ni}q^{n}\varepsilon\alpha^{-1}$$
for $i=0,1,\cdots,n$, $Kv_n^*=\varepsilon\alpha^{-1}q^{n}v_n^*$.
Similarly, that $gv^*_n=\alpha^{-1}v^*_n$ follows from
$$(gv_n^*)(v_i)=v_n^*(g^{-1}v_i)=\alpha^{-1} v_n^*(v_i)$$ for
$i=0,1,\cdots,n$, and that $hv^*_n=\beta^{-1}v^*_n$ follows from
$$(hv_n^*)(v_i)=v_n^*(h^{-1}v_i)=\beta^{-1}v_n^*(v_i)$$ for $i=0,1,\cdots,n$.
So $V_{\varepsilon,n,\alpha,\beta}^*$ is a simple $U_{g,h}$-module
generated by the highest weight vector $v_n^*$ with weight
$(\varepsilon \alpha^{-1} q^n, \alpha^{-1},\beta^{-1})$. Hence
$V_{\varepsilon,n,\alpha,\beta}^*\cong
V_{\varepsilon,n,\alpha^{-1},\beta^{-1}}$.\end{proof}

Let $H$ be a Hopf algebra, and $H^{\circ}=\{f\in H^*|\ker f$
contains an ideal $I$ such that the dimension of $H/I$ is
finite$\}$. Then $H^{\circ}$ is a Hopf algebra, which is called the
finite dual Hopf algebra of $H$. Now let $M$ be a left module over
the Hopf algebra $U_{g,h}$. For any $f\in M^*$ and $v\in M$, define
a coordinate function $c_{f,v}^M\in U_{g,h}^*$ via
$$c_{f,v}^M(x)=f(xv)\qquad\qquad for\quad x\in H.$$
If $M$ is finite dimensional, then $c_{f,v}^M\in U_{g,h}^{\circ}$,
the finite dual Hopf algebra  of $U_{g,h}$. The coordinate space
${C}(M)$ of $M$ is a linear subspace of $U_{g,h}^*$, spanned by the
coordinate functions $c_{f,v}^M$ as $f$ runs over $M^*$ and $v$ over
$M$.

\begin{corollary}Let $A$ be the subalgebra of $U_{g,h}^{\circ}$ generated by all the coordinate functions of all
finite dimensional simple $U_{g,h}$-modules. Then $A$ is a sub-Hopf
algebra of $U_{g,h}^{\circ}$.\end{corollary}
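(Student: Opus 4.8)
The plan is to verify that $A$ is stable under the remaining structure maps of the Hopf algebra $U_{g,h}^{\circ}$ — the comultiplication $\Delta$, the counit, and the antipode $S$ — and that it contains the unit; together these say precisely that $A$ is a sub-Hopf algebra. I would begin by recording the standard behaviour of coordinate functions under these operations: if $M$ is a finite-dimensional $U_{g,h}$-module with basis $\{e_1,\dots,e_d\}$ and dual basis $\{e_1^{*},\dots,e_d^{*}\}$, then for $f\in M^{*}$ and $v\in M$,
$$\Delta(c^{M}_{f,v})=\sum_{j=1}^{d}c^{M}_{f,e_j}\otimes c^{M}_{e_j^{*},v},\qquad S(c^{M}_{f,v})=c^{M^{*}}_{\mathrm{ev}_v,\,f},$$
where $\mathrm{ev}_v\in(M^{*})^{*}$ is evaluation at $v$ and $M^{*}$ carries the dual action $(af)(m)=f((Sa)m)$; moreover $c^{M}_{f,v}\cdot c^{N}_{g,w}=c^{M\otimes N}_{f\otimes g,\,v\otimes w}$ and the counit of $U_{g,h}^{\circ}$ is evaluation at $1$. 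In particular each coordinate space $C(M)$ is a subcoalgebra of $U_{g,h}^{\circ}$.

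Closure under $\Delta$, the counit, and the unit is then immediate. The unit of $U_{g,h}^{\circ}$ is the counit of $U_{g,h}$, which is the coordinate function $c^{V_{1,0,1,1}}_{f,v}$ (with $f(v)=1$) of the one-dimensional trivial module $V_{1,0,1,1}$, a simple module, so $1\in A$; the counit of $U_{g,h}^{\circ}$ takes values in $\mathbb{K}$ automatically; and since $\Delta$ is an algebra homomorphism it suffices to observe that $\Delta$ sends the algebra generators of $A$ into $A\otimes A$, which holds because $\Delta(c^{M}_{f,v})\in C(M)\otimes C(M)\subseteq A\otimes A$ whenever $M$ is simple. (One may note, using the Clebsch-Gordan decomposition of Corollary 3.6, that $C(M)\cdot C(N)$ lies in the span of the coordinate spaces of the simple constituents of $M\otimes N$, so in fact $A=\sum_{M\text{ simple}}C(M)$ as a vector space; this is not needed but makes the subcoalgebra property transparent.)

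The only step that genuinely uses the results of this section is closure under $S$. As $S$ is an anti-algebra homomorphism, it suffices to check that it sends each generator $c^{M}_{f,v}$ with $M$ simple into $A$; by the formula above $S(c^{M}_{f,v})$ lies in $C(M^{*})$, so we need $M^{*}$ to be again a finite-dimensional simple $U_{g,h}$-module. This is exactly Theorem 3.12, $V_{\varepsilon,n,\alpha,\beta}^{*}\cong V_{\varepsilon,n,\alpha^{-1},\beta^{-1}}$, combined with the classification (Theorem 3.5) that every finite-dimensional simple $U_{g,h}$-module has the form $V_{\varepsilon,n,\alpha,\beta}$; hence $C(M^{*})\subseteq A$ and $S(A)\subseteq A$. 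Assembling these observations, $A$ is a unital subalgebra of $U_{g,h}^{\circ}$ closed under $\Delta$, the counit, and $S$, i.e. a sub-Hopf algebra. I do not expect a real obstacle here: all the substance is the fact that the dual of a finite-dimensional simple $U_{g,h}$-module is again simple, which has already been established in Theorem 3.12.
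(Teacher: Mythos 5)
Your proof is correct, and it takes a genuinely more elementary route than the paper's. The paper defines the category $\hat{\mathcal{C}}$ of finite direct sums of finite-dimensional simple modules, notes that it is closed under tensor products (Corollary 3.6, Clebsch--Gordan) and duals (Theorem 3.12), and then cites [2, Corollary I.7.4] — a black-box result asserting that the directed union of coordinate spaces $C(V)$ over such a category is a sub-Hopf algebra, and that this union equals $A$. Your argument instead verifies the Hopf-subalgebra axioms directly from the standard formulas for coordinate functions, and — interestingly — it does not actually need Clebsch--Gordan: since $A$ is \emph{defined} as the subalgebra generated by coordinate functions, closure under multiplication is automatic, $\Delta$ is an algebra map so $\Delta(A)\subseteq A\otimes A$ follows from $\Delta(c^M_{f,v})\in C(M)\otimes C(M)$ on generators, $1=\varepsilon_{U_{g,h}}\in C(V_{1,0,1,1})$, and the only nontrivial step — $S(A)\subseteq A$ — reduces on generators to the simplicity of $M^{*}$, which is exactly Theorem 3.12. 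The trade-off: the paper's argument additionally establishes that $A$ coincides with $\bigcup_{V\in\hat{\mathcal C}}C(V)$, for which Corollary 3.6 is genuinely required (to show products of coordinate functions of simple modules are sums of coordinate functions of simple modules); your argument proves only the stated corollary, but more transparently and with fewer inputs.
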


\begin{proof}Let $\hat{\mathcal{C}}$ be the subcategory of the left $U_{g,h}$-module category consisting of
all finite direct sums of finite dimensional simple
$U_{g,h}$-modules. Then $\hat{\mathcal{C}}$ is closed under tensor
products and duals by Corollary 3.6 and Theorem 3.12. Thus $A$ is a
sub-Hopf algebra of $U^{\circ}_{g,h}$  and  is the directed union of
the coordinate spaces ${C}(V)$ for $V\in\hat{\mathcal {C}}$ by [2,
Corollary I.7.4].\end{proof}

Finally, we describe the simple modules over $U_{g,h}$ when $q$  is
a root of unity.  Assume that the order  of  $q$ is $d>2$ and define
$$e=\left\{\begin{array}{ll}d,&if\ d\ is\ odd,\\
\frac d2,&otherwise.\end{array}\right.$$  We will use the notations
$V(\lambda,a,b)$, $V(\lambda,a,0)$, $\widetilde{V}(\pm q^{1-j},c)$
to denote finite-dimensional simple $U_q(\mathfrak{sl}(2))$-modules.
These simple modules have been described in [6, Theorem VI.5.5]. The
next results follow from [6, Proposition VI.5.1, Proposition VI.5.2,
Theorem VI.5.5] and [8, Proposition 16.1].
\begin{proposition} Suppose $q$ is a root of unity. Then

(1) Any simple $U_{g,h}$-module of dimension $e$ is isomorphic to a
module of the following list:

(i) $\mathbbm{K}_{\alpha,\beta}\otimes V(\lambda,a,b)$, where
$\mathbbm{K}_{\alpha,\beta}=\mathbb{K}\cdot1$ is a one-dimensional
module over $\mathbbm{K}[g^{\pm 1},h^{\pm 1}]$, and $g\cdot
1=\alpha,$ $ h\cdot 1=\beta$ for some $\alpha,\beta\in
\mathbbm{K}^{\times }$.

(ii) $\mathbbm{K}_{\alpha,\beta}\otimes V(\lambda,a,0)$, where
$\lambda$ is not of the form $\pm q^{j-1}$ for any $1\leq j\leq
e-1$,

(iii) $\mathbbm{K}_{\alpha,\beta}\otimes\widetilde{V}(\pm q^{1-j},c)
$.

(2) Any simple $U_{g,h}$-module of dimension $n<e-1$ is isomorphic
to a module of the form $V_{\varepsilon,n,\alpha,\beta}$, where the
structure of $V_{\varepsilon,n,\alpha,\beta}$ is given by Theorem
3.5.

(3) The dimension of any simple $U_{g,h}$-module is not larger than
 $e$.
\end{proposition}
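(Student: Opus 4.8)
The plan is to reduce the statement to the known classification of finite-dimensional simple $U_q(\mathfrak{sl}(2))$-modules at a root of unity, transporting it through the algebra isomorphism of Proposition 2.2. First I would record that, since $\mathbbm{K}$ is algebraically closed and $\mathbbm{K}[g^{\pm1},h^{\pm1}]$ is a commutative Noetherian $\mathbbm{K}$-algebra whose maximal ideals are exactly the ideals $(g-\alpha,h-\beta)$ with $\alpha,\beta\in\mathbbm{K}^{\times}$, every finite-dimensional simple $\mathbbm{K}[g^{\pm1},h^{\pm1}]$-module is one-dimensional and isomorphic to some $\mathbbm{K}_{\alpha,\beta}$ (annihilated by a cofinite, hence maximal, ideal). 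Combining this with Proposition 2.2 and [8, Proposition 16.1], every finite-dimensional simple $U_{g,h}$-module $M$ is isomorphic to $\mathbbm{K}_{\alpha,\beta}\otimes W$ for some finite-dimensional simple $U_q(\mathfrak{sl}(2))$-module $W$ and some $\alpha,\beta\in\mathbbm{K}^{\times}$, with $\dim_{\mathbbm{K}}M=\dim_{\mathbbm{K}}W$; moreover $g,h$ act on $\mathbbm{K}_{\alpha,\beta}\otimes W$ by the scalars $\alpha,\beta$.

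Next I would invoke [6, Proposition VI.5.1, Proposition VI.5.2, Theorem VI.5.5]: the dimension of any finite-dimensional simple $U_q(\mathfrak{sl}(2))$-module is at most $e$; the simple modules of dimension exactly $e$ are precisely the modules $V(\lambda,a,b)$, the modules $V(\lambda,a,0)$ with $\lambda$ not of the form $\pm q^{j-1}$ for $1\le j\le e-1$, and the modules $\widetilde{V}(\pm q^{1-j},c)$; and the simple modules of dimension $<e$ are precisely the highest weight modules $V_{\varepsilon,n}$ (the construction in Theorem 3.5 going through verbatim whenever $n+1<e$). Part (3) is then immediate, since $\dim_{\mathbbm{K}}M=\dim_{\mathbbm{K}}W\le e$.

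For part (1), taking $\dim_{\mathbbm{K}}W=e$ and reading off the three families above gives items (i)--(iii), the only additional observation being that $g$ and $h$ act on $\mathbbm{K}_{\alpha,\beta}\otimes W$ as $\alpha,\beta$. For part (2), if $\dim_{\mathbbm{K}}M<e$ then $W\cong V_{\varepsilon,n}$ for suitable $\varepsilon\in\{\pm1\}$ and $n$, so $M\cong\mathbbm{K}_{\alpha,\beta}\otimes V_{\varepsilon,n}$; it then remains only to identify this with $V_{\varepsilon,n,\alpha,\beta}$ as in Theorem 3.5, which is exactly the verification already carried out in the paragraph following Theorem 3.5 — the vector $v_n\otimes 1$ is a highest weight vector of weight $(\varepsilon\alpha q^{n},\alpha,\beta)$ generating the module, and Lemma 3.3 (together with Theorem 3.5(ii),(v)) pins down the rest.

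The only genuinely non-clerical step is the passage from $M$ to its tensor factorization $\mathbbm{K}_{\alpha,\beta}\otimes W$, i.e., the application of [8, Proposition 16.1]; this uses in an essential way that $\mathbbm{K}$ is algebraically closed and that $M$ is finite-dimensional (so that it is a module over a finite-dimensional quotient of $\mathbbm{K}[g^{\pm1},h^{\pm1}]\otimes U_q(\mathfrak{sl}(2))$, to which the tensor-decomposition of simple modules over a tensor product of $\mathbbm{K}$-algebras applies). Everything else is bookkeeping: transporting the parametrization of [6] through the isomorphism of Proposition 2.2 and recording the scalars by which $g,h$ act. I expect no real obstacle beyond correctly matching the two parametrizations.
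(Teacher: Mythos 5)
Your proposal is correct and takes essentially the same route as the paper: the paper's proof consists of the single observation that the result follows from Proposition 2.2, [8, Proposition 16.1], and [6, Proposition VI.5.1, Proposition VI.5.2, Theorem VI.5.5], and you have simply unpacked that citation — reducing via the algebra isomorphism $U_{g,h}\cong\mathbbm{K}[g^{\pm1},h^{\pm1}]\otimes U_q(\mathfrak{sl}(2))$, using that simple modules over the tensor product are tensor products of simples (this is where [8, Proposition 16.1] and algebraic closedness enter), noting that simples over $\mathbbm{K}[g^{\pm1},h^{\pm1}]$ are the $\mathbbm{K}_{\alpha,\beta}$, and then reading off Kassel's root-of-unity classification for the $U_q(\mathfrak{sl}(2))$-factor.
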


\section{Verma modules and the category $\mathcal {O}$}

In this section, we assume that the nonzero element $q\in
\mathbb{K}$ is not a root of unity. We will study the BGG
subcategory of the category of all left $U_{g,h}$-modules. For the
undefined terms in this section, we refer the reader to [8] and
[10].

If $M$ is a $U_{g,h}$-module, a maximal weight vector is any nonzero
$m\in M$ that is killed by $E$, and is a common eigenvector for
$K,g,h$. A standard cyclic module is one which is generated by
exactly one maximal weight vector. For each $(a,b,c)\in
{\mathbbm{K}}^{\times 3}$, define the Verma module
$$V(a,b,c):=U_{g,h}/I(a,b,c),$$ where $I(a,b,c)$ is the left ideal of $U_{g,h}$
generated by $E$, $K-a$, $g-b,$ $h-c$. $V(a,b,c)$ is a free
${\mathbbm{K}}[F]$-module of rank one, by the PBW Theorem 2.1 for
$U_{g,h}$. Hence the set $W(V(a,b,c))$ of weights of the Verma
module $V(a,b,c)$ is equal to $\{ (q^{-2n}a,b,c)|n\geq 0\}$.

About the extension group $\Ext^1(V(a',b',c'),V(a,b,c))$ of two
Verma modules $V(a',b',c'),$ $V(a,b,c)$, we have the following:
\begin{proposition}{\it Suppose $V(a,b,c)$ and $V(a',b',c')$ are two Verma modules. Then $\Ext^1((V(a,b,c),V(a,b,c))\neq0$
and $\Ext^1((V(a',b',c'),V(a,b,c))=0$ if $a,b,c;a',b',c'$ satisfy
one of the following conditions.

(1) $(b,c)\neq (b',c')$;

(2) $(b,c)=(b',c')$,  $a\neq a'$ and $aa'\neq q^{-2}b^2$.}
\end{proposition}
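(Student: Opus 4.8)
The plan is to compute both extension groups directly using a projective resolution of the Verma module $V(a,b,c)$, exploiting that $V(a,b,c)$ is free of rank one over $\mathbbm{K}[F]$. First I would build a short free resolution: since $V(a,b,c)=U_{g,h}/I(a,b,c)$ and the defining ideal is generated by the four elements $E$, $K-a$, $g-b$, $h-c$, I would look for the syzygies among these generators using the PBW Theorem~2.1 and the commutation relations (1.1)--(1.4). The key structural fact is that $\mathbbm{K}[K^{\pm1},g^{\pm1},h^{\pm1}]$ contributes a Koszul-type piece (three commuting ``polynomial'' variables, as in the proof of Proposition~3.8) while $E$ contributes one more homological degree via the skew-polynomial structure $U_{g,h}=R'[E;\sigma',\delta']$; so I expect a resolution of length $4$, and in low degrees the relevant terms are $U_{g,h}\to U_{g,h}^{4}\to U_{g,h}\to V(a,b,c)\to 0$, where the rank-$4$ term records the relations $E\cdot(K-q^2a)$-type and the commuting relations among $g-b,h-c$ together with the cross term coming from $EF-FE=\frac{K-K^{-1}g^2}{q-q^{-1}}$.

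Next I would apply $\Hom_{U_{g,h}}(-,V(a',b',c'))$ and use that $\Hom_{U_{g,h}}(U_{g,h},V(a',b',c'))\cong V(a',b',c')$, identifying the differentials with the actions of $E$, $K-a$, $g-b$, $h-c$ on $V(a',b',c')$. On a weight-basis $\{F^n\bar 1\}$ of $V(a',b',c')$ we have $g-b$ acting as the scalar $b'-b$, $h-c$ as $c'-c$, $K-a$ as $q^{-2n}a'-a$, and $E$ acting by the shift with coefficient $\frac{q^{-(n-1)}a'-q^{n-1}a'^{-1}b'^2}{q-q^{-1}}$ (the formula from Lemma~3.3 with $\lambda=a'$, $\alpha=b'$). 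The computation of $\Ext^1$ then reduces to a kernel-modulo-image calculation for an explicit $4\times$(something) matrix of these multiplication operators. In case (1), $(b,c)\neq(b',c')$, one of the two scalars $b'-b$ or $c'-c$ is a unit acting invertibly on $V(a',b',c')$, which forces the relevant complex to be exact and kills $\Ext^1$; this is essentially the mechanism already used in the second half of the proof of Proposition~3.8 and in Theorem~3.10. In case (2), $(b,c)=(b',c')$ but $a\neq a'$ with $aa'\neq q^{-2}b^2$, I would argue that $K-a$ acts on $V(a',b',c')$ with $q^{-2n}a'-a\neq0$ for every $n\geq0$ (using $a\neq a'$ and that $q$ is not a root of unity), hence invertibly, while simultaneously the hypothesis $aa'\neq q^{-2}b^2$ guarantees the $E$-action coefficient $\frac{q^{-(n-1)}a'-q^{n-1}a'^{-1}b'^2}{q-q^{-1}}$ is nonzero at the one index where it would otherwise obstruct splitting — so again the complex computing $\Ext^1(V(a',b',c'),V(a,b,c))$ is exact.

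For the nonvanishing statement $\Ext^1(V(a,b,c),V(a,b,c))\neq0$, the cleanest route is to exhibit a non-split self-extension explicitly: for instance, take the module $\mathbbm{K}[F]\otimes M_{x,y}$ obtained by deforming the $g,h$-action exactly as in the construction of $M_{x,y}$ before Proposition~3.9 — i.e., let $g$ and $h$ act on a second copy of $V(a,b,c)$ with a nilpotent off-diagonal term — and check via the commutation relations that this is a well-defined $U_{g,h}$-module fitting in a non-split short exact sequence $0\to V(a,b,c)\to N\to V(a,b,c)\to 0$. Since $\mathbbm{K}[g^{\pm1},h^{\pm1}]$ already has self-extensions of its one-dimensional module (Proposition~3.8 gives $\Ext^1=\mathbbm{K}^2$), pulling such an extension back along $U_{g,h}\twoheadrightarrow$ its image on the highest-weight line produces the desired class. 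The main obstacle I anticipate is bookkeeping the syzygies of $I(a,b,c)$ correctly — in particular handling the cross term from relation (1.4) so that the degree-$2$ part of the resolution is exactly right — since an off-by-one in the resolution would change which products $aa'$ and $q^{-2}b^2$ appear in the obstruction and thus change the precise hypotheses in case (2); everything after that is the routine linear algebra of evaluating multiplication operators on the weight basis.
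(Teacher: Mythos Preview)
For the nonvanishing part your idea is essentially the paper's: it tensors the $U_q(\mathfrak{sl}(2))$-Verma module $V(ab^{-1})$ with a non-split self-extension $M_{x,y}\in\Ext^1(\mathbb{K}_{b,c},\mathbb{K}_{b,c})$ and checks that the result is a non-split self-extension of $V(a,b,c)$, the non-splitness being forced because $g$ (or $h$) does not act semisimply.

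For the vanishing part the paper takes a completely different and much shorter route than your resolution computation. It writes down a single central element $u$ --- either $\frac{g-b'}{b-b'}$, $\frac{h-c'}{c-c'}$, or an affine expression in the Casimir element $C$ of Corollary~2.4 --- chosen so that $u$ acts as the identity on $V(a,b,c)$ and as zero on $V(a',b',c')$. Any extension $0\to V(a,b,c)\to N\to V(a',b',c')\to 0$ then splits as $N=\Ker u\oplus uN$, exactly the mechanism from Theorem~3.10. The hypothesis $aa'\neq q^{-2}b^2$ in case~(2) is precisely what keeps the normalizing denominator of the Casimir-based $u$ nonzero.

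Your resolution argument, by contrast, has a genuine gap in case~(2). You claim that $K-a$ acts invertibly on $V(a',b',c')$ because $q^{-2n}a'-a\neq 0$ for all $n\geq 0$, ``using $a\neq a'$ and that $q$ is not a root of unity.'' But $a\neq a'$ only handles $n=0$: if, say, $a'=q^{2m}a$ for some $m>0$, then $a\neq a'$ yet $q^{-2m}a'-a=0$, so $K-a$ kills the weight-$q^{-2m}a'$ line and is not invertible. Nothing in the hypotheses rules this out. The sentence that follows (about $aa'\neq q^{-2}b^2$ controlling the $E$-coefficient ``at the one index where it would otherwise obstruct splitting'') does not repair this, since it presupposed $K-a$ was already invertible. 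The advantage of the paper's approach is that the Casimir acts on each Verma module by a \emph{single} scalar rather than a weight-by-weight sequence, so one only needs $\xi_{(a,b,c)}(C)\neq\xi_{(a',b',c')}(C)$, i.e.\ $(a-a')(aa'-q^{-2}b^2)\neq 0$ --- which is exactly the content of Proposition~4.9(1). A resolution computation could in principle be made to work, but you would have to track the full syzygy complex and not rely on invertibility of $K-a$.
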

\begin{proof}Let $M_{x,y}\in
\Ext^1(\mathbb{K}_{b,c},\mathbb{K}_{b,c})$ be the module described
in Proposition 3.9, where either $x\neq 0$ or $y\neq 0$. Consider
the $U_{g,h}$-module $M=V(ab^{-1})\otimes M_{x,y}$, where
$V(ab^{-1})$ is a Verma module over $U_q(\mathfrak{sl}(2))$
generated by a highest weight vector $v$ with weight $ab^{-1}$.
Suppose $w_1,w_2$ is a basis of $M_{x,y}$ such that $gw_1=bw_1$,
$gw_2=bw_2+xw_1$, $hw_1=cw_1$, $hw_2=cw_2+yw_1$. Then $K(v\otimes
w_1)=a(v\otimes w_1)$ and $$K(v\otimes w_2)=a(v\otimes
w_2)+ab^{-1}x(v\otimes w_1).$$Therefore the subspace $V_1$ of $M$
generated by
$$\frac{F^n}{[n]!}v\otimes w_1, \qquad n\in \mathbb{Z}_{\geq0}$$ is
a $U_{g,h}$-module, which is isomorphic to $ V(a,b,c)$. Moreover
$M/V_1$ is also isomorphic to $ V(a,b,c)$. Thus $M\in
\Ext^1(V(a,b,c),V(a,b,c))$. Suppose $M\cong V(a,b,c)\oplus
V(a,b,c)$. Then the actions of $g,h$ on $M$ are given via
multiplications by $b,c$ respectively. This is impossible when
either $x\neq 0$, or $y\neq 0$. So $M$ is a nonzero element in
$\Ext^1(V(a,b,c),V(a,b,c))$.

Now let $$u=\left\{\begin{array}{ll}\frac{g-b'}{b-b'},&if\  b\neq b'\\
\frac{h-c'}{c-c'},&if\ c\neq c'\\
\frac{aa'(q-q^{-1})^2}{(a-a')(qaa'-q^{-1}b^2)}(C-\frac{qa'+q^{-1}a'^{-1}b^2}{(q-q^{-1})^2}),&if\
a\neq a',aa'\neq q^{-2}b^2,(b,c)=(b',c'),\end{array}\right.$$ where
$C$, which is given in Corollary 2.4, is the Casimir element of
$U_{g,h}$. Then $u$ is in the center of $U_{g,h}$ by Corollary 2.4.
Suppose $V(a,b,c)$ and $V(a',b',c')$ are generated by the highest
weight vectors $v,v'$ respectively. It is easy to check that $uv=v$
and $uv'=0$. So $u$ induces the identity endomorphism of $V(a,b,c)$
and the zero endomorphism of $V(a',b',c')$. Similar to the proof of
Theorem 3.10, we can prove that every short exact sequence
$$0\rightarrow V(a,b,c)\rightarrow N\rightarrow
V(a',b',c')\rightarrow 0$$is splitting. Hence
$\Ext^1((V(a',b',c'),V(a,b,c))=0$.
\end{proof}
\begin{remark}It is unknown whether
$\Ext^1(V(q^{-2}a^{-1}b^2,b,c),V(a,b,c))=0$ in the case when
$b^2\neq q^2a^2$.\end{remark} The proof of the following proposition
is standard (see e.g. [5], [7] or [8]).
\begin{proposition}{\it (1) The Verma module $V(a,b,c)$ has a unique maximal submodule $N(a,b,c)$, and the quotient $V(a,b,c)/N(a,b,c)$
is a simple module $L(a,b,c)$.

(2) Any standard cyclic module is a quotient of some Verma
module.}\end{proposition}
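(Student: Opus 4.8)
The plan is to follow the classical argument for Verma modules over $U_q(\mathfrak{sl}(2))$ (as in [6, Chapter VI] or [8]), adapting it to the extra group-like generators $g,h$. For part (1), I would work with the standard basis $\{F^n v \mid n\geq 0\}$ of $V(a,b,c)$, where $v$ is the image of $1$; this is a basis by the PBW Theorem 2.1, and by Lemma 3.1 together with the relations (1.2)--(1.3), $F^n v$ is a weight vector of weight $(q^{-2n}a,\,b,\,c)$. The key point is that all weight spaces of $V(a,b,c)$ are one-dimensional and the weights are pairwise distinct (since $q$ is not a root of unity, the scalars $q^{-2n}a$ are distinct). Hence any submodule $W$ is a sum of weight spaces, i.e.\ $W=\bigoplus_{n\in S}\mathbb{K}F^n v$ for some $S\subseteq\mathbb{Z}_{\geq0}$. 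Since $F$ maps $F^nv$ to $F^{n+1}v$, the set $S$ must be ``upward closed'' with respect to $n\mapsto n+1$; and since a proper submodule cannot contain $v=F^0v$ (it would then be all of $V(a,b,c)$ by cyclicity), every proper submodule is contained in $\sum_{n\geq 1}\mathbb{K}F^nv$ if it is nonzero... more precisely, the sum $N(a,b,c)$ of all proper submodules is again a proper submodule (it does not contain $v$), hence is the unique maximal submodule, and $L(a,b,c):=V(a,b,c)/N(a,b,c)$ is simple by construction.

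For part (2), let $M$ be a standard cyclic module generated by a maximal weight vector $m$ with $Em=0$, $Km=am$, $gm=bm$, $hm=cm$ for some $(a,b,c)\in\mathbb{K}^{\times3}$. Define a map $U_{g,h}\to M$ by $u\mapsto u\cdot m$; this is a surjective $U_{g,h}$-module homomorphism since $m$ generates $M$. The left ideal $I(a,b,c)$ generated by $E$, $K-a$, $g-b$, $h-c$ lies in the kernel, because $E\cdot m=0$, $(K-a)\cdot m=0$, $(g-b)\cdot m=0$, $(h-c)\cdot m=0$. Therefore the map factors through $V(a,b,c)=U_{g,h}/I(a,b,c)$, giving a surjection $V(a,b,c)\twoheadrightarrow M$. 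So $M$ is a quotient of the Verma module $V(a,b,c)$.

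I do not expect a serious obstacle here; the only mild care needed is in part (1), verifying that submodules are graded by weights — this uses the finite-dimensionality of each weight space together with the fact that distinct weights of $V(a,b,c)$ are linearly independent as characters of the abelian group generated by $K,g,h$, which holds precisely because $q$ is not a root of unity (so no two of the weights $(q^{-2n}a,b,c)$ coincide). The rest is the routine bookkeeping already carried out in [6], [7], [8], which is why the statement can simply be cited as standard.
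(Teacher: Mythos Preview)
Your argument is correct and is exactly the standard one the paper defers to by citing [5], [7], [8] without details: the one-dimensional weight spaces with pairwise distinct weights force every submodule to be graded, so proper submodules miss $v$ and their sum is the unique maximal submodule; part (2) is the universal property of $U_{g,h}/I(a,b,c)$. Nothing further is needed.
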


By [8, Theorem 4.2] and Proposition 2.2, every Verma module over
$U_{g,h}$ is isomorphic to $V(\lambda)\otimes \mathbb{K}_{b,c}$,
where $V(\lambda)$ is a Verma module over $U_q(\mathfrak{sl}(2))$.
Conversely, $V(\lambda)\otimes \mathbb{K}_{b,c}$ is a Verma
$U_{g,h}$ module if  $V(\lambda)$ is a Verma module over
$U_q(\mathfrak{sl}(2))$. In the following, we determine when the
Verma module $V(\lambda)\otimes \mathbb{K}_{b,c}$ is isomorphic to
the Verma module $V(a,b,c)$, using the isomorphism in Proposition
2.2(1).

\begin{proposition}{\it Suppose $V(\lambda)$ is a Verma module over $U_q(\mathfrak{sl}(2))$ and $\mathbb{K}_{b,c}$ is a simple module
over $\mathbb{K}[g^{\pm1},h^{\pm1}]$. Then
$V(\lambda)\otimes\mathbb{K}_{b,c}$ is a Verma module over $U_{g,h}$
with the highest weight $(b\lambda,b,c)$. Conversely, every Verma
module $V(a,b,c)$ over $U_{g,h}$ is isomorphic to
$$V(ab^{-1})\otimes \mathbb{K}_{b,c},$$ where $V(ab^{-1})$ is a
Verma module over $U_q(\mathfrak{sl}(2))$.

Therefore the Verma module $V(a,b,c)$ is isomorphic to
$V(\lambda)\otimes \mathbb{K}_{b',c'}$ if and only if
$(a,b,c)=(b'\lambda,b',c')$.}
\end{proposition}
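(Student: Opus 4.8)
The plan is to write down, for each $\lambda\in\mathbb{K}^{\times}$ and each pair $(b,c)\in(\mathbb{K}^{\times})^{2}$, an explicit maximal weight vector in $V(\lambda)\otimes\mathbb{K}_{b,c}$ and then identify this module with a Verma module over $U_{g,h}$ by comparing $\mathbb{K}[F]$-module structures. Throughout, $V(\lambda)$ and $\mathbb{K}_{b,c}$ are viewed as $U_{g,h}$-modules as in Section 3, i.e. pulled back along the two algebra projections attached to the decomposition $U_{g,h}\cong\mathbb{K}[g^{\pm1},h^{\pm1}]\otimes U_q(\mathfrak{sl}(2))$ of Proposition 2.2; concretely, on $V(\lambda)$ the elements $g,h$ act as the identity and $F$ acts as the Chevalley generator, while on $\mathbb{K}_{b,c}=\mathbb{K}\cdot 1$ the elements $E,F$ act as $0$ and $g,h,K$ act by $b,c,b$ respectively (this last being the convention of Section 3; note relation (1.4) only allows $K$ to act on a one-dimensional module by $\pm b$). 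The tensor product $V(\lambda)\otimes\mathbb{K}_{b,c}$ carries the $U_{g,h}$-action given by the coproduct (1.5)--(1.8).

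First I would verify that $v\otimes 1$ is a maximal weight vector of weight $(b\lambda,b,c)$, where $v$ is the highest weight vector of $V(\lambda)$ (so $Ev=0$, $Kv=\lambda v$, $gv=v$, $hv=v$): from $\Delta(K)=K\otimes K$ and $\Delta(a)=a\otimes a$ for $a\in G$ one gets $K(v\otimes 1)=\lambda b\,(v\otimes 1)$, $g(v\otimes 1)=b\,(v\otimes 1)$, $h(v\otimes 1)=c\,(v\otimes 1)$, and from $\Delta(E)=h^{-1}\otimes E+E\otimes hK$ with $Ev=0$, $E\cdot 1=0$ one gets $E(v\otimes 1)=0$. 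Next, using $\Delta(F)=K^{-1}hg^{2}\otimes F+F\otimes h^{-1}$ and $F\cdot 1=0$, one computes $F(F^{n}v\otimes 1)=c^{-1}(F^{n+1}v\otimes 1)$, hence $F^{n}(v\otimes 1)=c^{-n}(F^{n}v\otimes 1)$ for all $n\geq 0$. Since $\{F^{n}v:n\geq 0\}$ is a $\mathbb{K}$-basis of the $U_q(\mathfrak{sl}(2))$-Verma module $V(\lambda)$ and $\mathbb{K}_{b,c}$ is one-dimensional, $\{F^{n}(v\otimes 1):n\geq 0\}$ is a $\mathbb{K}$-basis of $V(\lambda)\otimes\mathbb{K}_{b,c}$; in particular $v\otimes 1$ generates the module, which is therefore standard cyclic with highest weight $(b\lambda,b,c)$ and free of rank one over $\mathbb{K}[F]$ on the generator $v\otimes 1$.

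To finish, Proposition 4.3(2) gives a surjection $V(b\lambda,b,c)\twoheadrightarrow V(\lambda)\otimes\mathbb{K}_{b,c}$ sending the canonical generator of $V(b\lambda,b,c)$ to $v\otimes 1$; both modules are free $\mathbb{K}[F]$-modules of rank one (the source by Theorem 2.1), and this surjection carries a free generator to a free generator, hence is bijective and so is an isomorphism of $U_{g,h}$-modules. Thus $V(\lambda)\otimes\mathbb{K}_{b,c}\cong V(b\lambda,b,c)$, which is the first assertion. For the converse, given $V(a,b,c)$ put $\lambda:=ab^{-1}$; the first assertion yields $V(ab^{-1})\otimes\mathbb{K}_{b,c}\cong V\bigl((ab^{-1})b,b,c\bigr)=V(a,b,c)$. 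Finally, if $(a,b,c)=(b'\lambda,b',c')$ then $V(\lambda)\otimes\mathbb{K}_{b',c'}\cong V(b'\lambda,b',c')=V(a,b,c)$ by the first assertion, and conversely an isomorphism $V(a,b,c)\cong V(\lambda)\otimes\mathbb{K}_{b',c'}\cong V(b'\lambda,b',c')$ forces $(a,b,c)=(b'\lambda,b',c')$, because the set of weights of a Verma module $V(\mu_{1},\mu_{2},\mu_{3})$ equals $\{(q^{-2n}\mu_{1},\mu_{2},\mu_{3}):n\geq 0\}$ --- with pairwise distinct first coordinates since $q$ is not a root of unity --- so the triple $(\mu_{1},\mu_{2},\mu_{3})$ is recovered as the unique weight from which all others arise by repeatedly multiplying the first coordinate by $q^{-2}$, an isomorphism invariant. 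The only step that is not pure bookkeeping is the freeness-of-rank-one comparison used to upgrade the surjection to an isomorphism; alternatively one can run that comparison on weight spaces, all of which here have dimension $0$ or $1$.
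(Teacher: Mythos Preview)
Your proof is correct, but you interpret the tensor product $V(\lambda)\otimes\mathbb{K}_{b,c}$ differently from the paper. The paper does \emph{not} use the Hopf coproduct here; it uses the external (box) tensor product coming directly from the algebra decomposition of Proposition~2.2, so that the action is $(x\otimes y)\cdot(v\otimes 1)=xv\otimes y\cdot 1$ for $x\otimes y\in U_q(\mathfrak{sl}(2))\otimes\mathbb{K}[g^{\pm1},h^{\pm1}]$. Under that action $F$ sends $F^{n}v\otimes 1$ to $F^{n+1}v\otimes 1$ with no $c^{-1}$ factor, and the paper then writes the explicit map $v_p\otimes 1\mapsto\tfrac{1}{[p]!}F^{p}v$ into $V(b\lambda,b,c)$ and invokes the argument of [6, Proposition~VI.3.7]. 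Your route---first making each factor a $U_{g,h}$-module via the two projections and then tensoring via the coproduct (1.5)--(1.8)---yields a module that is a priori different but isomorphic (the powers of $c^{-1}$ are absorbed into rescaling the weight basis), so the conclusion is the same and your free-rank-one comparison is exactly the right way to finish. One small correction: your remark that this is ``the convention of Section~3'' is not accurate; Section~3 also uses the external tensor (see, e.g., the formula $K(v_p\otimes w_1)=K'v_p\otimes gw_1$ in Proposition~3.9), not the coproduct tensor. What your approach buys is that it stays inside the Hopf-module framework and would adapt to settings without a clean algebra factorisation; what the paper's approach buys is that no auxiliary $U_{g,h}$-structure on the individual factors needs to be chosen or checked, and the $F$-action is simpler.
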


\begin{proof}Suppose $E',K',F'$ are Chevalley generators
of $U_q(\mathfrak{sl}(2))$. Let $V(\lambda)$ be a Verma module over
$U_q(\mathfrak{sl}(2))$. Then $V(\lambda)$ has a basis $\{v_p|p\in
\mathbb{Z}_{\geq 0}\}$ satisfying
$$K'v_p=\lambda q^{-2p}v_p,\qquad K'^{-1}v_p=\lambda^{-1}q^{2p}v_p,$$
$$E'v_{p+1}=\frac{q^{-p}\lambda-q^p\lambda^{-1}}{q-q^{-1}}v_p,\qquad
F'v_p=[p+1]v_{p+1}$$ and $E'v_0=0$. Since $U_{g,h}\cong
U_q(\mathfrak{sl}(2))\otimes$ $ \mathbb{K}[g^{\pm 1},h^{\pm1}]$,
then $V(\lambda)\otimes \mathbb{K}_{b,c}$ is a cyclic module with
the highest vector $v_0\otimes 1$, where the action of $x\otimes
y\in U_q(\mathfrak{sl}(2))\otimes \mathbb{K}[g^{\pm 1},h^{\pm1}]$ on
$v\otimes 1\in V(\lambda)\otimes \mathbb{K}_{b,c}$ is given by
$$(x\otimes y)\cdot(v\otimes 1)=x\cdot v\otimes y\cdot 1.$$
The highest weight of $V(\lambda)\otimes \mathbb{K}_{b,c}$ is
$(b\lambda,b,c)$. Let $v=1+I(b\lambda,b,c)$ be the highest weight
vector of the Verma module $V(b\lambda,b,c)$. Define a linear map
$f$ from $V(\lambda)\otimes \mathbb{K}_{b,c}$ to $V(a,b,c)$ by
$f(v_p\otimes 1)=\frac1{[p]!}F^pv$. Similar to [6, Proposition
VI.3.7], we can prove that $f$ is a homomorphism of
$U_{g,h}$-modules. Therefore $V(\lambda)\otimes \mathbb{K}_{b,c}$ is
the Verma module with highest weight $(b\lambda,b,c)$ by Proposition
4.3(2).

Conversely, let $\lambda=ab^{-1}$. Consider an infinite-dimensional
vector space $V(\lambda)$ with basis $\{v_i|i\in \mathbb{Z}_{\geq
0}\}$. For $p\geq0$, set
$$K'v_p=\lambda q^{-2p}v_p,\qquad K'^{-1}v_p=\lambda^{-1}q^{2p}v_p,$$
$$E'v_{p+1}=\frac{q^{-p}\lambda-q^p\lambda^{-1}}{q-q^{-1}}v_p,\qquad
F'v_p=[p+1]v_{p+1}$$ and $E'v_0=0$, where $E',K',F'$ are Chevalley
generators of $U_q(\mathfrak{sl}(2))$. Then $V(\lambda)$ is a Verma
module over $U_q(\mathfrak{sl}(2))$ with the above actions by [6,
Lemma VI.3.6]. The highest weight of $V(\lambda)\otimes
\mathbb{K}_{b,c}$ is $(a,b,c)$.  Therefore $V(\lambda)\otimes
\mathbb{K}_{b,c}$ is isomorphic to the Verma module over $U_{g,h}$
with highest weight $(a,b,c)$.
\end{proof}

One of the basic questions about a Verma module is to determine its
maximal weight vectors. We now answer this question.
\begin{theorem} Let $V(a,b,c), V(a',b',c')$  be two Verma modules, where $a,b,c;a',b',c'$ $\in$ ${\mathbbm{K}}^{\times}$.

(1) If $V(a,b,c)$ has a maximal weight vector of weight $(
 q^{-2n}a,b,c)$, then it is unique up to scalars and $a=\varepsilon bq^{n-1}$ with $n>0$.

 (2) $\dim_{\mathbbm{K}}\Hom_{U_{g,h}}(V(a',b',c'),V(a,b,c))=0$ or $1$ for all
 $(a',b',c')$ and $(a,b,c)$, and all nonzero homomorphisms between
 two Verma modules are injective.

 (3) The nonzero submodule of $V(a,b,c)$ (which is unique if it exists) is precisely of the form
 $$V(q^{-2n}a,b,c)={\mathbbm{K}}[F]v_{q^{-2n}a,b,c}.$$
 \end{theorem}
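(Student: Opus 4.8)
The plan is to reduce everything to the corresponding statements for Verma modules over $U_q(\mathfrak{sl}(2))$ via the isomorphism of Proposition 4.5, and then invoke the classical $\mathfrak{sl}(2)$-theory (as in [6, Chapter VI]). First I would write $V(a,b,c)\cong V(ab^{-1})\otimes\mathbb{K}_{b,c}$ and $V(a',b',c')\cong V(a'b'^{-1})\otimes\mathbb{K}_{b',c'}$ using Proposition 4.5. Since $g,h$ act on $V(a,b,c)$ by the scalars $b,c$ on every weight space, any maximal weight vector must live in a weight space $(q^{-2n}a,b,c)$ with the \emph{same} $b,c$; in particular a nonzero homomorphism $V(a',b',c')\to V(a,b,c)$ forces $(b',c')=(b,c)$, because the image of the generator is a maximal weight vector of weight $(a',b,c)$ and $g,h$ act on it by $b',c'$ as well as by $b,c$. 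This disposes of the $g,h$ part, and what remains is genuinely a question about $V(ab^{-1})$ and $V(a'b'^{-1})$ over $U_q(\mathfrak{sl}(2))$.

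Next I would translate the known $\mathfrak{sl}(2)$-facts. By [6, Proposition VI.3.7 and the surrounding results], a Verma module $V(\lambda)$ over $U_q(\mathfrak{sl}(2))$ has a nonzero maximal (primitive) vector in the weight space $q^{-2n}\lambda$ precisely when $\lambda=\varepsilon q^{n-1}$ for $\varepsilon=\pm1$ and $n>0$, in which case it is unique up to scalars; this is exactly formula (3.1) of Lemma 3.3 here (with $\alpha=1$), where $Ev_n=0$ forces $q^{-(n-1)}\lambda=q^{n-1}\lambda^{-1}$, i.e.\ $\lambda^2=q^{2(n-1)}$. Pulling this back through the isomorphism $V(a,b,c)\cong V(ab^{-1})\otimes\mathbb{K}_{b,c}$: a maximal weight vector of weight $(q^{-2n}a,b,c)$ exists iff $ab^{-1}=\varepsilon q^{n-1}$, i.e.\ $a=\varepsilon b q^{n-1}$ with $n>0$, and it is then unique up to scalars. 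This proves part (1). For part (2): a nonzero homomorphism $V(a',b',c')\to V(a,b,c)$ is determined by where it sends the generator, which must be a maximal weight vector; by part (1) the space of such vectors is at most one-dimensional, giving $\dim\Hom\le1$, and injectivity follows because $V(a,b,c)$ is a torsion-free (indeed free of rank one) $\mathbb{K}[F]$-module by the PBW Theorem 2.1, so a nonzero element of $\mathbb{K}[F]v$ generates a submodule isomorphic to $\mathbb{K}[F]$ and the map $F^k v_{\mathrm{gen}}\mapsto F^k(\text{image})$ has trivial kernel.

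For part (3): suppose $N$ is a nonzero submodule of $V(a,b,c)$. By Proposition 3.4 (every nonzero finitely generated — here, cyclic after restricting to a maximal weight vector) argument applied to $N$, or more directly since $N$ is a $U_{g,h}$-submodule of a module all of whose weight spaces are one-dimensional, $N$ is spanned by a subset of the basis vectors $v_p=\frac1{[p]!}F^pv_{a,b,c}$ and is stable under $F$; hence $N$ is spanned by $\{v_p\mid p\ge p_0\}$ for some $p_0$, and for this to be $E$-stable the bottom vector $v_{p_0}$ must be killed by $E$, i.e.\ it is a maximal weight vector of weight $(q^{-2p_0}a,b,c)$. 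By part (1), $p_0=n$ where $a=\varepsilon bq^{n-1}$, and then $N=\mathbb{K}[F]v_n=V(q^{-2n}a,b,c)$ — the last identification being the submodule statement of Proposition 4.5 (or a direct check that $v_n$ generates a copy of the Verma module with highest weight $(q^{-2n}a,b,c)$). Uniqueness of the nonzero proper submodule is immediate from uniqueness of $n$.

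The main obstacle I anticipate is not conceptual but bookkeeping: one must be careful that the weight-space argument in part (3) — "every submodule of a module with one-dimensional weight spaces is a span of weight vectors" — is correctly justified. This needs that $K$ acts semisimply on $V(a,b,c)$ with distinct eigenvalues $q^{-2p}a$ on distinct weight spaces (which holds since $q$ is not a root of unity, so the $q^{-2p}a$ are pairwise distinct), so that any submodule, being $K$-stable, decomposes as a direct sum of its intersections with the one-dimensional weight spaces. Once that is in hand, $F$-stability forces the "upward-closed" shape $\{v_p\mid p\ge p_0\}$ and $E$-stability pins down $p_0$ via Lemma 3.3. Everything else is a faithful transcription of the $U_q(\mathfrak{sl}(2))$ results through Proposition 4.5, so I would keep the write-up short and lean on [6, Chapter VI] and the earlier results (Theorem 2.1, Lemma 3.3, Propositions 4.3 and 4.5) for the substantive content.
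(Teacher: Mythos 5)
Your proof is correct, but for part (1) you take a genuinely different route from the paper. The paper proves (1) by a direct computation inside $U_{g,h}$: it writes a putative maximal weight vector as $p(F)\bar1 = (a_nF^n + \cdots + a_0)\bar 1$, applies $E$ using the commutator formula $[E,F^m]=[m]F^{m-1}\tfrac{q^{-(m-1)}K-q^{m-1}K^{-1}g^2}{q-q^{-1}}$ of Lemma~3.7, and reads off that the leading coefficient forces $a=\varepsilon bq^{n-1}$ while all lower coefficients vanish. You instead pass to $V(a,b,c)\cong V(ab^{-1})\otimes\mathbb{K}_{b,c}$ (this is Proposition~4.4, not 4.5, in the paper's numbering) and quote the primitive-vector criterion for Verma modules over $U_q(\mathfrak{sl}(2))$. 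Both are valid; the paper's computation is self-contained and works directly in $U_{g,h}$, while your reduction is conceptually cleaner but leans on the tensor-decomposition proposition and the $\mathfrak{sl}(2)$ literature. For parts (2) and (3) your arguments are essentially the paper's: part (2) uses (1) together with the PBW freeness of $V(a,b,c)$ over $\mathbb{K}[F]$, and your part (3) (semisimplicity of the $K$-action with pairwise distinct eigenvalues $q^{-2p}a$ forcing any submodule to be a sum of weight lines, then $F$-stability giving an upward-closed set, then $E$-stability pinning the bottom index) is the same weight-space decomposition argument that the paper runs, only phrased more explicitly than the paper's shorthand ``Since $K^iv\in M$ for any $i$, $a_{n-k}F^{n-k}\bar1\in M$.''
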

\begin{proof}Suppose $p(F)=(a_nF^n+a_{n-1}F^{n-1}+\cdots+a_0)\bar1$ is a maximal weight vector, where
$\bar1$ is the maximal weight vector of $V(a,b,c)$ and $a_n\neq 0$.
Then
$$E(p(F))=[n]\frac{q^{-n+1}a-q^{n-1}a^{-1}b^2}{q-q^{-1}}a_nF^{n-1}\bar1+(lower\
degree\ terms)\bar 1=0,$$ by Lemma 3.7. This implies $a=\varepsilon
bq^{n-1}.$ Moreover $p(X)=a_nF^n\bar1$ and (1) follows.

(2) follows from (1) and the fact that ${\mathbbm{K}}[F]$ is a
principal ideal domain directly.

If $M$ is a nonzero submodule of $V(a,b,c)$, then $M$ contains a
vector of the highest possible weight $(q^{-2n}a,b,c)$. We claim
that $M=V(q^{-2n}a,b,c)={\mathbbm{K}}[F]v_{q^{-2n}a,b,c}$, where
$v_{q^{-2n}a,b,c}$ is the weight vector in $M$ with weight
$(q^{-2n}a,b,c)$. The weight vector $v_{q^{-2n}a,b,c}$ is unique up
to scalar by (1). To prove the above claim, we only need to show
that $M\subseteq{\mathbbm{K}}[F]v_{q^{-2n}a,b,c}$.

Suppose, to the contrary, that $v\in M$ is of the form
$$v=p(F)v_{q^{-2n}a,b,c}+a_{n-1}F^{n-1}\bar1+\cdots+a_1F\bar1+a_0\bar1.$$
We may assume that $p(F)=0$ because $v_{q^{-2n}a,b,c}\in M$. Since
$K^iv\in M$ for any $i$, $a_{n-k}F^{n-k}\bar1\in M$,
$k=1,2,\cdots,n$. This is a contradiction since $(q^{-2i}a,b,c)$ is
not a weight of $M$ if $i<n$.
\end{proof}
\begin{remark}(1) If $\frac a b\neq \varepsilon q^{n}$ for any $n\geq 1$, then the Verma module is a simple module by Theorem 4.5.

(2) It is well-known that the Verma module $V(\lambda)$ over
$U_q(\mathfrak{sl}(2))$ is simple provided that $\lambda\neq
\varepsilon q^{n}$ for any integer $n>0$, where $\varepsilon=\pm1$.
Since $$V(\lambda)\otimes \mathbb{K}_{b,c}\cong V(b{\lambda},b,c)$$
by Proposition 4.4, $V(\lambda)\otimes \mathbb{K}_{b,c}$ is a simple
$U_{g,h}$-module provided that $\lambda\neq \varepsilon q^n$ for any
$n$, where $\varepsilon=\pm 1$.

(3) The simple module $L(a,b,c)$ is finite-dimensional if and only
if the only maximal submodule $N(a,b,c)$  of $V(a,b,c)$ is equal to
$V(q^{-2n}a,b,c)$ and $a=\varepsilon bq^{n-1}$ for some $n\in
\mathbb{N}$. In this case, $L(\varepsilon bq^{n-1},b,c)\cong
V_{\varepsilon,n-1,b,c}$, which is given by Theorem 3.5.\end{remark}

Finally, we study the BGG category $\mathcal {O}$, which is defined
below.
\begin{definition} The BGG category $\mathcal{O}$ consists of all finitely generated $U_{g,h}$-modules and all homomorphisms of modules with the following
properties:

(1) The actions of $K,g,h$ are diagonalized with finite-dimensional
weight spaces.

(2) The $B_+$-action is locally finite, where $B_+$ is the
subalgebra generated by $E,$ $K^{\pm1},$ $g^{\pm1},$ $h^{\pm1}$.
\end{definition}

It is obvious that every Verma module is in $\mathcal{O}$. By
Theorem 3.5, all finite-dimensional simple $U_{g,h}$-modules are in
$\mathcal {O}$. Any simple module in $\mathcal{O}$ is isomorphic to
either a simple Verma module or a finite-dimensional simple module
$V_{\varepsilon,n,\alpha,\beta}$ described in Theorem 3.5. In fact,
if $M$ is a simple module in $\mathcal{O}$, then $M=U_{g,h}v$ for
some common eigenvector $v$ of $K,g,h$. Suppose $Kv=\lambda v$. Then
$KE^nv=q^{2n}\lambda E^nv$ for any positive integer $n$. Since the
action of $E$ is locally finite, there is an $n$ such that $E^nv\neq
0$ and $E^{n+1}v=0$. Thus $M=U_{g,h}E^nv$ is a standard cyclic
$U_{g,h}$-module. So it is a quotient of a Verma module. Hence it is
isomorphic to either a simple Verma module or a finite-dimensional
simple module $V_{\varepsilon,n,\alpha,\beta}$.

Suppose $\xymatrix@C=0.3cm{
  0 \ar[r] & V_{\varepsilon,n,\alpha,\beta} \ar[r]^{} &M \ar[r]^{} & V_{\varepsilon,n,\alpha,\beta}\ar[r] & 0 }$ is a nonzero element in
$\Ext^1(V_{\varepsilon,n,\alpha,\beta},
V_{\varepsilon,n,\alpha,\beta})$.  We remark that this $M$ is not in
$\mathcal {O}$ since the actions of $g,h$ on $M$ can not be
diagonalized by Proposition 3.8 and Theorem 3.10. Similarly, if
$\xymatrix@C=0.3cm{
  0 \ar[r] & V(a,b,c) \ar[r]^{} &M \ar[r]^{} & V(a,b,c)\ar[r] & 0 }$
  is
a nonzero element in $\Ext^1(V(a,b,c),V(a,b,c))$, then
 $M$  is not in
$\mathcal {O}$.

By using results in [8], we obtain that every finite-dimensional
module in $\mathcal{O}$ is semisimple. In the following we give a
direct proof of this fact.
\begin{proposition}Every finite-dimensional module in $\mathcal{O}$
is semisimple.
\end{proposition}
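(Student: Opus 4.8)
The plan is to argue by induction on $\dim_{\mathbb{K}}M$ (equivalently, on the composition length of $M$), using Theorem 3.5, Theorem 3.10, and the Remark immediately preceding this proposition. Before starting the induction I would record that $\mathcal{O}$ is closed under passage to submodules and quotient modules: over the Noetherian ring $U_{g,h}$ (Theorem 2.1) a submodule or quotient of a finitely generated module is finitely generated; if $K,g,h$ act diagonalizably on $M$ then they act diagonalizably on any submodule or quotient, with weight spaces that are subspaces, resp. images, of the weight spaces of $M$; and local finiteness of the $B_+$-action is inherited by submodules and quotients. Moreover every simple subquotient of a finite-dimensional $M$ is finite-dimensional, hence of the form $V_{\varepsilon,n,\alpha,\beta}$ by Theorem 3.5.

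For the inductive step, assume $M\neq 0$ is finite-dimensional and in $\mathcal{O}$. Since $M$ is finite-dimensional it contains a simple submodule $S$, which lies in $\mathcal{O}$, and $M/S\in\mathcal{O}$ has strictly smaller dimension, so by the inductive hypothesis $M/S=\bigoplus_{i=1}^{k}T_i$ with each $T_i$ a finite-dimensional simple $U_{g,h}$-module. Writing $\pi\colon M\to M/S$ for the projection, put $M_i:=\pi^{-1}(T_i)$; each $M_i$ is a submodule of $M$, hence an object of $\mathcal{O}$, and fits into a short exact sequence $0\to S\to M_i\to T_i\to 0$.

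The crux is to show every such sequence splits. If $S\not\cong T_i$, then the quadruples classifying $S$ and $T_i$ are distinct, so $\Ext^1(T_i,S)=0$ by Theorem 3.10 and the sequence splits. If $S\cong T_i\cong V_{\varepsilon,n,\alpha,\beta}$, the sequence defines a class in $\Ext^1(V_{\varepsilon,n,\alpha,\beta},V_{\varepsilon,n,\alpha,\beta})$; if this class were nonzero, $M_i$ would be one of the modules $V_{\varepsilon,n}\otimes M_{x,y}$ with $(x,y)\neq(0,0)$ discussed in the Remark, on which $g$ or $h$ fails to act diagonalizably (Proposition 3.8, Theorem 3.10), contradicting $M_i\in\mathcal{O}$; hence the class vanishes and the sequence splits. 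In every case $M_i=S\oplus T_i'$ with $T_i'\cong T_i$ a simple submodule of $M$, and $\pi$ restricts to an isomorphism $T_i'\xrightarrow{\sim}T_i$.

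Finally, let $N:=S+\sum_{i=1}^{k}T_i'\subseteq M$; this is a sum of simple submodules, hence semisimple. Since $\pi(N)=\sum_i\pi(T_i')=\sum_i T_i=M/S$ and $\ker\pi=S\subseteq N$, we conclude $N=M$, so $M$ is semisimple. The only real obstacle in this argument is the self-extension case $S\cong T_i$, where $\Ext^1$ is nonzero in the ambient module category; it is disposed of exactly by the observation already made in the Remark, namely that the nontrivial self-extensions of a finite-dimensional simple module carry a non-semisimple action of $g$ or $h$ and therefore cannot belong to $\mathcal{O}$. Alternatively, one could bypass the induction entirely: $g$ and $h$ are central in $U_{g,h}$ by the relations in Section 1, so for $M\in\mathcal{O}$ the joint eigenspace decomposition of $g,h$ is a decomposition of $M$ into $U_{g,h}$-submodules, each of which is a finite-dimensional module over $U_{g,h}/(g-\alpha,h-\beta)\cong U_q(\mathfrak{sl}(2))$ by Proposition 2.2 and hence semisimple since $q$ is not a root of unity.
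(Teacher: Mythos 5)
Your proof is correct, and its inductive step is organized differently from the paper's. The paper argues top--down: it takes the (semisimple, by induction) maximal term $M_k$ of a composition series, considers the quotients $M/S_i$ for each simple summand $S_i$ of $M_k$, invokes the inductive hypothesis on each $M/S_i$ to get a splitting $\xi_i$, and then \emph{averages} $\tfrac{1}{k-1}\sum_i\xi_i\pi_i$ to manufacture a retraction $M\to M_k$. You argue bottom--up: pick a simple submodule $S$, apply the inductive hypothesis to $M/S=\bigoplus T_i$, pull each $T_i$ back to a length-two submodule $M_i=\pi^{-1}(T_i)$ of $M$, split each $M_i$ directly, and observe that $S$ together with the lifted copies $T_i'$ generate $M$. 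Both arguments reduce to exactly the same input, namely the vanishing (or $\mathcal{O}$-incompatibility) of $\Ext^1$ between finite-dimensional simples coming from Proposition 3.8 and Theorem 3.10; but your reduction is more economical, since each $M_i$ is already a length-two object of $\mathcal{O}$ and Theorem 3.10 applies to it immediately, with no need for the averaging device. Your closing remark is worth highlighting: since $g,h$ are central and act diagonalizably on any $M\in\mathcal{O}$, the joint $(g,h)$-eigenspace decomposition is already a decomposition into $U_{g,h}$-submodules, each of which factors through $U_{g,h}/(g-\alpha,h-\beta)\cong U_q(\mathfrak{sl}(2))$ by Proposition 2.2 and is therefore semisimple because $q$ is not a root of unity. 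That route bypasses the extension computations of Section 3 entirely and is, in my view, the cleanest proof of the proposition; the paper does not record it.
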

\begin{proof}Let $0=M_0\subseteq M_1\subseteq \cdots\subseteq
M_n=M$ be a composition series of a finite-dimensional module $M$
for $M\in \mathcal {O}$. We prove that $M$ is semisimple by using
induction. If $n=2$, then we have the following exact sequence
$$0\rightarrow M_1\rightarrow M\rightarrow
M/M_1\rightarrow 0.$$ Suppose the above sequence is not splitting,
then the either the action of $g$ or the action of $h$ on $M$ is not
semisimple by Theorem 3.10. Thus $M\notin \mathcal{O}$. This
contradiction implies that  $M$ is semisimple. Suppose $M$ is
semisimple in the case when $n=k\geq 2$. Now let $n=k+1$. Then
$M_k=\oplus_{i=1}^kS_i$ is a direct sum of simple $U_{g,h}$-modules
$S_i$ by the assumption. Now let $N_i=S_1\oplus\cdots\oplus
\widehat{S_{i}}\oplus \cdots\oplus S_k$, where $\widehat{S_{i}}$
means that $S_i$ is omitted. Consider the following commutative
diagrams for $i=1,2,\cdots,k$:
$$\xymatrix{
  0\ar[r]^{ } & M_k \ar[d]_{\lambda_i} \ar[r]^{\phi } & M \ar[d]_{\pi_i}
  \ar[r]^{ \pi} & M/M_k \ar[d]_{\id} \ar[r]^{} &0  \\
  0 \ar[r]^{ } &N_i\ar[r]^{\varphi_i } & M/S_i \ar[r]^{\psi_i} & M/M_k \ar[r]^{} & 0,
  }$$where $\phi,\varphi_i$ are embedding mappings, and
  $\lambda_i,\pi_i,\pi,\psi_i$ are the canonical projections.
Since the bottom exact sequences are splitting by the inductive
assumption, there are homomorphisms $\xi_i:M/S_i\rightarrow $ $N_i$
such that $\xi_i\varphi_i=\id_{N_i}.$ Define $\xi:M\rightarrow M_k$
via
$$\xi(m)=\frac1{k-1}\sum\limits_{i=1}^k\xi_i\pi_i(m)$$ for $m\in M$.
Now let $m=m_1+\cdots+m_k\in M_k$, where $m_i\in S_i$. Then
$$\xi_i\pi_i(m)=\xi_i\pi_i\phi(m)=\xi_i\varphi_i\lambda_i(m)=m-m_i, $$ and
$$\xi\phi(m)=\frac1{k-1}\sum\limits_{i=1}^k\xi_i\pi_i(m)=m.$$
This means that the top exact sequence of the above commutative
diagrams is splitting. Hence $M\cong M_k\oplus \Ker \xi\cong
M_k\oplus M/M_k\cong S_1\oplus\cdots\oplus S_k\oplus M/M_k$ is
semisimple.
\end{proof}

By the PBW Theorem 2.1, the algebra $U_{g,h}$ has a triangular
decomposition $\mathbb{K}[F]\otimes H\otimes $ $\mathbb{K}[E]$,
where $H=\mathbb{K}[K^{\pm1},g^{\pm1},h^{\pm1}]$. In the same way as
[8, Definition 11.1], we can define the Harish-Chandra projection
$\xi$ as follows:
$$\xi:=\varepsilon\otimes \id\otimes
\varepsilon:U_{g,h}=\mathbb{K}[F]\otimes H\otimes
\mathbb{K}[E]\rightarrow H.$$ Let $V(a,b,c)$ be a Verma module
generated by a nonzero highest weight vector $v$. Then
\begin{eqnarray}Cv=\frac{qa+q^{-1}a^{-1}b^2}{(q-q^{-1})^2}v,\qquad gv=bv,\qquad hv=cv,\end{eqnarray}
where $C$ is the Casimir element of $U_{g,h}$. By Corollary 2.4, the
center of $U_{g,h}$ is $\mathbb{K}[C,g^{\pm1},h^{\pm1}]$. For any
element $z\in\mathbb{K}[C,g^{\pm1},h^{\pm1}]$, $zv=\xi_{(a,b,c)}(z)
v$ for some $\xi_{(a,b,c)}(z)\in \mathbb{K}$. Then
$$\xi_{(a,b,c)}\in
\Hom_{alg}(\mathbb{K}[C,g^{\pm1},h^{\pm1}],\mathbb{K}).$$ We call
$\xi_{(a,b,c)}$  the central character determined by $V(a,b,c)$.

\begin{proposition}{\it (1) Suppose $V(a,b,c)$ and $V(a',b',c')$ are two Verma modules. Then $\xi_{(a',b',c')}=\xi_{(a,b,c)}$
if and only if \begin{eqnarray} (a-a')(aa'-q^{-2}b^2)=0,\qquad
b=b',\qquad c=c'.
\end{eqnarray}
(2) $\Hom_{U_{g,h}}(V(a,b,c),V(a',b',c'))\neq 0$ if and only if
$a=\varepsilon q^{-n-1}b$ and $a'=$ $\varepsilon q^{n-1}b$ for some
nonnegative integer $n$ and $(b,c)=(b',c')$}.
\end{proposition}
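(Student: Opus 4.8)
The plan is to treat the two assertions separately, reducing part (1) to a short computation with the central character acting on a highest weight vector, and part (2) to the submodule structure of Verma modules supplied by Theorem 4.5.

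For part (1), I would begin from Corollary 2.4: the center of $U_{g,h}$ equals $\mathbbm{K}[g^{\pm1},h^{\pm1},C]$, so it is generated as a $\mathbbm{K}$-algebra by $g^{\pm1}$, $h^{\pm1}$, $C$, and an algebra homomorphism from it to $\mathbbm{K}$ is determined by its values on $g$, $h$, $C$. By (4.1) those values for $\xi_{(a,b,c)}$ are $b$, $c$, and $\frac{qa+q^{-1}a^{-1}b^2}{(q-q^{-1})^2}$. Hence $\xi_{(a',b',c')}=\xi_{(a,b,c)}$ is equivalent to $b=b'$, $c=c'$, together with $qa+q^{-1}a^{-1}b^2=qa'+q^{-1}a'^{-1}b^2$; multiplying this last identity by $aa'\neq 0$ and collecting terms rewrites it as $(a-a')(qaa'-q^{-1}b^2)=0$, i.e.\ $(a-a')(aa'-q^{-2}b^2)=0$, which is (4.2).

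For part (2), let $\phi\colon V(a,b,c)\to V(a',b',c')$ be nonzero. Since $g$ and $h$ are central (Corollary 2.4) and commute with $F$, each Verma module has $g$, $h$ acting by the scalars from its highest weight; comparing these across $\phi$ forces $b=b'$ and $c=c'$. By Theorem 4.5(2) $\phi$ is injective, so $\im\phi$ is a nonzero submodule of $V(a',b,c)$, and by Theorem 4.5(3) it equals $V(q^{-2n}a',b,c)={\mathbbm{K}}[F]v_{q^{-2n}a',b,c}$ for some $n\geq 0$. The image $\phi(v)$ of a generator $v$ of $V(a,b,c)$ is annihilated by $E$, is a weight vector of weight $(a,b,c)$, and generates $\im\phi$ over ${\mathbbm{K}}[F]$; a weight vector generating $V(q^{-2n}a',b,c)$ over ${\mathbbm{K}}[F]$ must lie in its top weight space, so $a=q^{-2n}a'$. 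If $n\geq 1$, then $V(a',b,c)$ has a maximal weight vector of weight $(q^{-2n}a',b,c)$, so Theorem 4.5(1) gives $a'=\varepsilon bq^{n-1}$ and hence $a=q^{-2n}a'=\varepsilon bq^{-n-1}$; the case $n=0$ is the degenerate one in which $\phi$ is an isomorphism and $(a,b,c)=(a',b',c')$. Conversely, if $(b,c)=(b',c')$ with $a=\varepsilon bq^{-n-1}$ and $a'=\varepsilon bq^{n-1}$, then for $n\geq 1$ Theorem 4.5(3) exhibits a submodule of $V(a',b,c)$ equal to $V(q^{-2n}a',b,c)=V(a,b,c)$ whose inclusion is a nonzero homomorphism, while for $n=0$ the two Verma modules coincide and the identity serves.

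The computation in part (1) is routine. The step I expect to require the most care is, in part (2), identifying $\im\phi$ with a Verma submodule and then checking that $\phi(v)$ sits in its \emph{top} weight space rather than strictly below it; this is exactly where Theorem 4.5(3) and the uniqueness in Theorem 4.5(1) are used, and one must be careful that the weight of $\phi(v)$ cannot be a lower weight of $V(q^{-2n}a',b,c)$.
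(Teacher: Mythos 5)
Your proof of part (1) is essentially the paper's proof, just spelled out: both reduce to comparing the values of the three algebra generators $g,h,C$ of the center on the highest weight vectors and then simplifying the scalar identity in $C$.

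For part (2) you take a genuinely different route in the ``only if'' direction. The paper's proof is computational: it expands $\varphi(v)=\sum_{i=0}^n a_iF^iv'$, uses the $K$-action and the fact that $q$ is not a root of unity to force $\varphi(v)=a_nF^nv'$ with $a=q^{-2n}a'$, and then applies Lemma~3.7 to $E\varphi(v)=0$ to extract the constraint on $a'$. You instead appeal to Theorem~4.5: injectivity of nonzero maps between Vermas (4.5(2)) identifies $\im\phi$ as a nonzero submodule, 4.5(3) pins that submodule down as $V(q^{-2n}a',b,c)$, a weight argument places $\phi(v)$ in the top weight space so $a=q^{-2n}a'$, and 4.5(1) then yields $a'=\varepsilon q^{n-1}b$. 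The advantage of your route is that the $E$-computation is not redone; it was already absorbed into Theorem~4.5. The tradeoff is that you rely on the implicit converse of 4.5(1) (that $a'=\varepsilon q^{n-1}b$ really does produce the maximal weight vector $F^n\bar 1'$) when constructing the nonzero map in the ``if'' direction, whereas the paper builds the map $f(F)v\mapsto f(F)F^nv'$ explicitly. Both your proof and the paper's share the same slight imprecision in the $n=0$ case (when $\varphi$ is an isomorphism, the argument produces $a=a'$ but no constraint tying $a$ to $b$, yet the statement asserts $a=\varepsilon q^{-1}b$); this is a gap in the statement rather than in either argument.
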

\begin{proof}Let $v,v'$ be the nonzero highest weight vectors of
$V(a,b,c)$ and $V(a',b',c')$ respectively. Then
$\xi_{(a',b',c')}=\xi_{(a,b,c)}$ if and only if
$Cv'=\xi_{(a,b,c)}(C)v'$, $gv'=\xi_{(a,b,c)}(g)v'$ and
$hv'=\xi_{(a,b,c)}(h)v'$. Thus (4.2) follows from (4.1).

If there is a nonzero homomorphism $\varphi$ from $V(a,b,c)$ to
$V(a',b',c')$, then $$\xi_{(a',b',c')}=\xi_{(a,b,c)}.$$ Thus (4.2)
holds. Suppose $\varphi(v)=(\sum\limits_{i=0}^na_iF^i)v'$, where
$a_{n}\neq 0$. Since $\varphi(Kv)=K\varphi(v)$,
$$aa_i=q^{-2i}a_ia'$$ for $i=0,1,\cdots,n$. Hence $a=q^{-2n}a'$ and $a_i=0$ for $0\leq i\leq
n-1$. Observe that
$$0=\varphi(Ev)=E\varphi(v)=a_nEF^nv'=a_n[n]\frac{a'q^{-n+1}-a'^{-1}q^{n-1}b^2}{q-q^{-1}}F^{n-1}v'.$$
Hence $aa'=q^{2n-2}b^2$. So $a'=\varepsilon q^{n-1}b$ and
$a=\varepsilon q^{-n-1}b$.

Conversely, notice that $V(a,b,c)=\mathbb{K}[F]v$ and
$V(a',b',c')=\mathbb{K}[F]v'$ are two free $\mathbb{K}[F]$-modules.
Thus the mapping
$$\varphi(f(F)v)=f(F)F^nv',\qquad  f(F)\in \mathbb{K}[F]$$ is a nonzero
linear mapping. Since $b=b'$ and $c=c'$,
$\varphi(gf(F)v)=g\varphi(f(F)v)$ and
$\varphi(hf(F)v)=h\varphi(f(F)v)$. It is routine to check that
$\varphi(Ef(F)v)=E\varphi(f(F)v)$ and
$\varphi(Kf(F)v)=K\varphi(f(F)v)$. So $\varphi$ is a nonzero
homomorphism of $U_{g,h}$-modules.
\end{proof}

For any  $\nu\in
\Hom_{alg}(\mathbb{K}[C,g^{\pm1},h^{\pm1}],\mathbb{K})$, define a
full subcategory $\mathcal {O}(\nu)$ of $ \mathcal {O}$ as follows:
$$\mathcal {O}(\nu)=\{M\in \mathcal {O}|\forall m\in
M,z\in\mathbb{K}[C,g^{\pm1},h^{\pm1}], \exists n\in \mathbb{N}\text{
such that } (z-\nu(z))^nm=0\}.$$ For any $\nu\in
\Hom_{alg}(\mathbb{K}[C,g^{\pm1},h^{\pm1}],\mathbb{K})$, suppose
$\nu(C)=\mu$, $\nu(g)=b$, $\nu(h)=c$. Then $b,c\in
\mathbb{K}^{\times}$. Since $\mathbb{K}$ is an algebraically closed
field, there is $a\in \mathbb{K}$ such that
$\frac{qa+q^{-1}a^{-1}b^2}{(q-q^{-1})^2}=\mu$. Therefore the Verma
module $V(a,b,c)\in \mathcal{O}(\nu)$ by (4.1), and
$\mathcal{O}(\nu)$ is not empty. By results in [8, Theorem 11.2], we
have the following decomposition of $\mathcal {O}$.

\begin{theorem}The category $\mathcal {O}=\bigoplus\limits_{\nu\in \Hom_{alg}(\mathbb{K}[C,g^{\pm1},h^{\pm1}],\mathbb{K})}\mathcal {O}(\nu).$

\end{theorem}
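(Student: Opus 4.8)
The statement is a standard block-decomposition of a highest-weight category along central characters, so I would follow the now-classical template (as in Humphreys' BGG book, and in reference [8] which the paper explicitly invokes) adapted to $U_{g,h}$. The center of $U_{g,h}$ is $Z:=\mathbb{K}[C,g^{\pm1},h^{\pm1}]$ by Corollary 2.4, and every object $M\in\mathcal{O}$ carries a locally finite action of $Z$: this is the only substantive input and must be checked first. Once that is in hand, for each algebra homomorphism $\nu\colon Z\to\mathbb{K}$ one sets $M^\nu:=\{m\in M\mid (z-\nu(z))^N m=0 \text{ for all } z\in Z \text{ and } N\gg 0\}$ (the generalized simultaneous eigenspace), and the goal is to show $M=\bigoplus_\nu M^\nu$ as $U_{g,h}$-modules, with only finitely many summands nonzero, and that each $M^\nu$ again lies in $\mathcal{O}$.

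First I would verify local finiteness of $Z$ on objects of $\mathcal{O}$. Since $g,h$ act diagonalizably with finite-dimensional weight spaces and $Z$ commutes with the triangular decomposition, it suffices to see that $C$ acts locally finitely; but any $M\in\mathcal{O}$ is finitely generated and each generator, together with its images under the locally finite $B_+$-action and finitely many applications of $F$, spans a submodule on which $K,g,h$ act with finitely many eigenvalues, and on each weight space $C$ preserves the space, which is finite-dimensional — so $C$ acts locally finitely on a set of generators, hence (being central) on all of $M$. Then, by the standard decomposition of a finitely generated module over a commutative ring $Z$ into generalized eigenspaces for a finite set of maximal ideals — concretely, using that $M$ is annihilated by $\prod_i \mathfrak{m}_i^{k}$ for finitely many maximal ideals $\mathfrak{m}_i\subset Z$ corresponding to characters $\nu_i$ (here one uses that $Z/\mathfrak{m}\cong\mathbb{K}$ since $\mathbb{K}$ is algebraically closed, so maximal ideals are exactly kernels of characters $\nu$) and the Chinese Remainder Theorem — one obtains $M=\bigoplus_{i=1}^r M^{\nu_i}$. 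Each $M^{\nu_i}$ is a $U_{g,h}$-submodule because $Z$ is central, and it inherits finite generation (Noetherian), diagonalizable $K,g,h$, and local $B_+$-finiteness from $M$, so $M^{\nu_i}\in\mathcal{O}(\nu_i)$.

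The remaining points are the bookkeeping that makes "$\bigoplus$" literally a category decomposition: one must check that $\Hom_{U_{g,h}}(M,M')=0$ whenever $M\in\mathcal{O}(\nu)$, $M'\in\mathcal{O}(\nu')$ with $\nu\neq\nu'$ — immediate, since a homomorphism carries a generalized $\nu$-eigenvector to a generalized $\nu$-eigenvector, which in $M'$ is forced to be $0$ — and that the subcategories $\mathcal{O}(\nu)$ are full in $\mathcal{O}$ and closed under subquotients and extensions (so that the decomposition is a genuine direct sum of abelian subcategories); all of these follow formally from the eigenspace decomposition on each object. I would also record that $\mathcal{O}(\nu)\neq 0$ for every $\nu$, using the paragraph preceding the theorem: given $\nu$ with $\nu(C)=\mu$, $\nu(g)=b$, $\nu(h)=c$, algebraic closedness of $\mathbb{K}$ produces $a$ with $\frac{qa+q^{-1}a^{-1}b^2}{(q-q^{-1})^2}=\mu$, and then $V(a,b,c)\in\mathcal{O}(\nu)$ by (4.1).

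**Main obstacle.** The only genuinely non-formal step is the local finiteness of the center on arbitrary objects of $\mathcal{O}$; everything after that is the usual commutative-algebra splitting into generalized eigenspaces plus functoriality. In fact, since the paper is content to invoke [8, Theorem 11.2] together with Proposition 2.2 and Corollary 2.4, the cleanest route is simply to transport the corresponding statement for $U_q(\mathfrak{sl}(2))$ through the algebra isomorphism $U_{g,h}\cong\mathbb{K}[g^{\pm1},h^{\pm1}]\otimes U_q(\mathfrak{sl}(2))$, noting that this isomorphism carries the triangular decomposition, the category-$\mathcal{O}$ conditions, and the center compatibly (the center of the tensor factor $\mathbb{K}[g^{\pm1},h^{\pm1}]$ being all of it), so that the block decomposition of $\mathcal{O}$ over $U_{g,h}$ refines the block decomposition over $U_q(\mathfrak{sl}(2))$ by the extra pair of scalars $(b,c)$ recording the $g,h$-action; this is exactly the content of Proposition 4.9(1), which already identifies when two characters coincide. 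I would present the short self-contained argument above as the main proof and remark that it also follows from [8, Theorem 11.2] via Proposition 2.2.
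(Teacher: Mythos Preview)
Your proposal is correct and in fact subsumes the paper's own treatment: the paper gives no argument beyond the sentence ``By results in [8, Theorem 11.2], we have the following decomposition of $\mathcal{O}$,'' relying on the preceding paragraph (which you reproduce) to show each $\mathcal{O}(\nu)$ is nonempty. Your self-contained central-character argument---local finiteness of $Z=\mathbb{K}[C,g^{\pm1},h^{\pm1}]$ via the fact that $C$ preserves the finite-dimensional weight spaces and $M$ is generated by finitely many weight vectors, followed by the Chinese Remainder splitting into generalized eigenspaces and the routine checks that each $M^\nu$ is a $U_{g,h}$-submodule in $\mathcal{O}$ with no nonzero maps between distinct blocks---is exactly the content of the cited result from [8] specialized to this setting, so the two approaches coincide, with yours simply unpacking the reference.

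One minor remark: your paragraph on local finiteness of $C$ is more elaborate than necessary. Since $C$ is central it preserves each (finite-dimensional) weight space, so for finitely many weight-vector generators $m_1,\dots,m_r$ the product of the characteristic polynomials of $C$ on their weight spaces annihilates each $m_i$ and hence, by centrality, all of $M$; this immediately gives the finitely many maximal ideals you need without invoking the $B_+$-action or applications of $F$.
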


Let $\mathcal {H}$ be the Harish-Chandra category over
$(U_{g,h},H)$, which consists of all $U_{g,h}$-modules $M$ with a
simultaneous weight space decomposition for
$H=\mathbb{K}[K^{\pm1},g^{\pm1},h^{\pm1}]$, and finite-dimensional
weight spaces. By Proposition 2.5, $U_{g,h}$ has an anti-involution
$i$. Thus we can define a duality functor $F:\mathcal {H}\rightarrow
\mathcal {H}$ as follows: $F(M)$ is the vector space  spanned by all
$ {H}$-weight vectors in $M^*=\Hom_{\mathbb{K}}(M,\mathbb{K}).$ It
is a module under the action determined by $$\langle
am^*,m\rangle=\langle m^*,i(a)m\rangle$$ for $a\in U_{g,h}$, $m^*\in
F(M)$, $m\in M$. By results in [8], $F$ defines a duality functor
$F:\mathcal {O}\rightarrow \mathcal {O}^{op}$. Moreover,
$F(L(a,b,c))=L(a,b,c)$, $F(V(a,b,c))$ has the socle $L(a,b,c)$ and
so on.

By Proposition 4.9,  $U_{g,h}$ satisfies the condition (S4) defined
in [8]. Therefore it satisfies the conditions (S1), (S2), and (S3)
by [8, Proposition 11.3] and [8, Theorem 10.1], where (S1), (S2) and
(S3) are defined in [8]. By [8, Theorem 4.3], we have the following
theorem since $\Gamma$ is trivial.

\begin{theorem}{\it
Let $\nu\in
\Hom_{\mathbb{K}}(\mathbb{K}[K^{\pm1},g^{\pm1},h^{\pm}],\mathbb{K})$
and $\mathcal {O}(\nu)$ have the same meaning as in Theorem 4.10.
Then:

(1) Each object of the  block $\mathcal {O}(\nu)$ has a filtration
whose subquotients are quotients of Verma modules.

(2) Each block $\mathcal {O}(\nu)$ has enough projective objects.

(3) Each block $\mathcal {O}(\nu)$ is a highest weight category,
equivalent to the category of finitely generated right modules over
a finite-dimensional $\mathbb{K}$-algebra.

In particular, BGG Reciprocity holds in $\mathcal {O}$.}
\end{theorem}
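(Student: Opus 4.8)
The plan is to deduce the whole statement, BGG reciprocity included, from the axiomatic machinery of [8], once one has checked that the pair $(U_{g,h},H)$ with $H=\mathbb{K}[K^{\pm1},g^{\pm1},h^{\pm1}]$ is an instance of the setup treated there. All the structural data are already available: the PBW Theorem~2.1 gives the triangular decomposition $U_{g,h}=\mathbb{K}[F]\otimes H\otimes\mathbb{K}[E]$ and hence the Harish-Chandra projection $\xi$; Corollary~2.4 identifies the center as $\mathbb{K}[C,g^{\pm1},h^{\pm1}]$, so every object of $\mathcal{O}$ splits along central characters and $\mathcal{O}=\bigoplus_{\nu}\mathcal{O}(\nu)$ by Theorem~4.10; Proposition~4.9 computes the spaces $\Hom_{U_{g,h}}(V(a,b,c),V(a',b',c'))$ and says exactly when two Verma modules lie in a common block; and Proposition~2.5 supplies the anti-involution $i$ acting as the identity on $H$, which is precisely what produces the duality functor $F\colon\mathcal{O}\to\mathcal{O}^{op}$ with $F(L(a,b,c))\cong L(a,b,c)$ and $F(V(a,b,c))$ having socle $L(a,b,c)$.

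First I would verify condition (S4) of [8] from Proposition~4.9 (finiteness of $\dim\Hom$ between Verma modules, together with the explicit linkage criterion) and from the duality of Proposition~2.5; this is the one genuine input. Granting (S4), conditions (S1), (S2), (S3) follow formally from [8, Proposition~11.3] and [8, Theorem~10.1]. Next I would record that the auxiliary group $\Gamma$ appearing in [8] is trivial here: the weight of $V(a,b,c)$ is pinned down by $(a,b,c)$, and $K$ moves weights by the single step $q^{2}$, so there is no residual linkage symmetry --- consistent with Proposition~4.4, which writes every Verma module as $V(\lambda)\otimes\mathbb{K}_{b,c}$ with no extra $\mathfrak{sl}(2)$-symmetry. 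With (S1)--(S4) in force and $\Gamma$ trivial, [8, Theorem~4.3] then yields (1), (2), (3) directly: each object of $\mathcal{O}(\nu)$ has a filtration by quotients of Verma modules, $\mathcal{O}(\nu)$ has enough projectives, and it is a highest weight category equivalent to the category of finitely generated right modules over the endomorphism algebra of a projective generator --- finite-dimensional because each block has only finitely many simples (the finite-dimensional $L(\varepsilon bq^{n-1},b,c)$ and at most one simple Verma module, by Remark~4.6 and Proposition~4.9), each admitting a finite projective cover. BGG reciprocity is then the standard formal consequence of the duality functor $F$.

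The main obstacle is the honest verification underlying the first step: that every module in $\mathcal{O}$ is Noetherian with finite-length weight spaces and locally finite $B_{+}$-action (this is built into Definition~4.7), that each $\mathcal{O}(\nu)$ has finitely many simple objects, and above all that $\Gamma$ is genuinely trivial, since the reduction of a block to a finite-dimensional algebra rests on each block being ``finite''. Once these points are settled, everything else is a citation: in [8] the passage from (S1)--(S4) (with $\Gamma$ trivial) to highest weight categories and BGG reciprocity is purely formal.
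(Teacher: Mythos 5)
Your proposal follows the same route as the paper: verify (S4) via Proposition~4.9 (Hom-spaces between Verma modules and the linkage criterion), deduce (S1)--(S3) from [8, Proposition~11.3] and [8, Theorem~10.1], note that $\Gamma$ is trivial, and conclude by citing [8, Theorem~4.3]; the paper's proof is exactly this citation chain, stated more tersely. Your added remarks about the duality functor, the finiteness of simples per block, and what underlies the triviality of $\Gamma$ are elaborations of the same argument rather than a different one.
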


\end{document}